\providecommand{\keywords}[1]
{\small	
  \textbf{\textit{Keywords---}} #1
}
\numberwithin{equation}{section}
\newtheorem{theorem}{Theorem}[section]
\newtheorem{lemma}[theorem]{Lemma}
\newtheorem{definition}[theorem]{Definition}
\newtheorem{fact}[theorem]{Fact}
\newtheorem{remark}[theorem]{Remark}
\newcommand{\M}{\mathcal{M}}
\newcommand{\U}{\mathcal{U}}
\newcommand{\A}{\mathcal{A}}
\newcommand{\G}{\mathcal{G}}
\newcommand{\J}{\mathcal{J}}
\newcommand{\GG}{\mathrm{G}}
\newcommand{\T}{\mathrm{T}}
\newcommand{\R}{\mathbb{R}}
\newcommand{\etal}{ et al. }
\newcommand{\argmin}{\mathop{\rm argmin}}
\newcommand{\br}{\mathbb{R}}
\newcommand{\half}{\frac{1}{2}}
\newcommand{\Tr}{\mathrm{Tr}}
\newcommand{\st}{\mathrm{s.t. }}
\newcommand{\vet}{\mathrm{vec}}
\newcommand{\St}{\mathrm{St}}
\newcommand{\be}{\begin{equation}}
\newcommand{\ee}{\end{equation}}
\newcommand{\ba}{\begin{array}}
\newcommand{\ea}{\end{array}}
\newcommand{\bad}{\begin{aligned}}
\newcommand{\ead}{\end{aligned}}
\newcommand{\normtwo}[1]{\| #1 \|}
\newcommand{\normfro}[1]{\| #1 \|_{\text{F}}}
\newcommand{\inp}[2]{\langle #1, #2 \rangle}
\newcommand{\Retr}{\mathrm{Retr}}
\newcommand{\Id}{\mathrm{Id}}
\newcommand{\LCal}{\mathcal{L}}
\newcommand{\grad}{\mathrm{grad}}
\newcommand{\vvec}{\mathrm{vec}}
\newcommand{\svec}{\overline{\mathrm{vec}}}
\newcommand{\prox}{\mathrm{prox}}
\newcommand{\Proj}{\mathrm{Proj}}
\begin{document}
\title{{Proximal Gradient Method for Nonsmooth Optimization \\ over the Stiefel Manifold}}
\author{Shixiang Chen\thanks{Department of Systems Engineering and Engineering Management, The Chinese University of Hong Kong}
\and Shiqian Ma\thanks{Department of Mathematics, University of California, Davis}
\and Anthony Man-Cho So\thanks{Department of Systems Engineering and Engineering Management, and, by courtesy, CUHK-BGI Innovation
Institute of Trans-omics, The Chinese University of Hong Kong}
\and Tong Zhang\thanks{Departments of Computer Science and Mathematics, The Hong Kong University of Science and Technology}}
\date{May 05, 2019}
\maketitle

\begin{abstract}
 {We consider optimization problems over the Stiefel manifold whose objective function is the summation of a smooth function and a nonsmooth function.} Existing methods for solving this kind of problems can be classified into three classes. Algorithms in the first class rely on information of the subgradients of the objective function and thus tend to converge slowly in practice. Algorithms in the second class are proximal point algorithms, which involve subproblems that can be as difficult as the original problem. Algorithms in the third class are based on operator-splitting techniques, but they usually lack rigorous convergence guarantees. In this paper, we propose a retraction-based proximal gradient method for solving this class of problems. We prove that the proposed method globally converges to a stationary point. Iteration complexity for obtaining an $\epsilon$-stationary solution is also analyzed. Numerical results on solving sparse PCA and compressed modes problems are reported to demonstrate the advantages of the proposed method.
\end{abstract}

\keywords{Manifold Optimization; Stiefel Manifold; Nonsmooth; Proximal Gradient Method; Iteration Complexity; Semi-smooth Newton Method; Sparse PCA; Compressed Modes}

\section{Introduction}

Optimization over Riemannian manifolds has recently drawn a lot of attention due to its applications in many different fields, including low-rank matrix completion \cite{RTRMC-2011,Vandereycken-matrix-completion-2013}, phase retrieval \cite{Boumal-phase-retrieval-2018,Sun-Ju-geometric-phase-retrieval-2018}, phase synchronization \cite{Boumal-phase-synchronization-2016,Liu-generalized-power-phase-synchronization-2017}, blind deconvolution \cite{Huang-2018}, and dictionary learning \cite{Sra-Riemannian-dictionary-learning-2016,Sun-dictionary-recovery-sphere-2017}. Manifold optimization seeks to minimize an objective function over a smooth manifold. Some commonly encountered manifolds include the sphere, Stiefel manifold, Grassmann manifold, and Hadamard manifold. The recent monograph by Absil\etal\cite{Absil2009} studies this topic in depth. In particular, it studies several important classes of algorithms for manifold optimization with smooth objective, including line-search method, Newton's method, and trust-region method. There are also many gradient-based algorithms for solving manifold optimization problems, including \cite{Wen-Yin-2013,Shamir-svd-2015,Shamir-svd-2016,Liu-So-Wu-2018,Jiang-svrg-manifold-2017,zhang2016fast}.
However, all these methods require computing the derivatives of the objective function and do not apply to the case where the objective function is nonsmooth. 

 {
In this paper, we focus on a class of nonsmooth nonconvex optimization problems over the Stiefel manifold that takes the form
\be\label{prob-g}
\min \ F(X):=f(X)+h(X), \ \st, X\in \M:=\St(n,r)=\{X: X\in\R^{n\times r}, X^\top X=I_r\},
\ee
where $I_r$ denotes the $r\times r$ identity matrix ($r\leq n$). Throughout this paper, we make the following assumptions about \eqref{prob-g}:
\begin{enumerate}[label=(\roman*)]
\item $f$ is smooth, possibly nonconvex, and its gradient $\nabla f$ is Lipschitz continuous with Lipschitz constant $L$.
\item $h$ is convex, possibly nonsmooth, and is Lipschitz continuous with constant $L_h$. Moreover, the proximal mapping of $h$ is easy to find.
\end{enumerate}
Note that here the smoothness, Lipschitz continuity and convexity are interpreted when the function in question is considered as a function in the ambient Euclidean space.
}

We restrict our discussions in this paper to \eqref{prob-g} because it already finds many important applications in practice. 
In the following we briefly mention some representative applications of \eqref{prob-g}. For more examples of manifold optimization with nonsmooth objectives, we refer the reader to \cite{Absil-nonsmooth-examples}.

{\bf Example 1. Sparse Principal Component Analysis.} Principal Component Analysis (PCA), proposed by Pearson \cite{pearson1901liii} and later developed by Hotelling \cite{hotelling1933analysis}, is one of the most fundamental statistical tools in analyzing high-dimensional data. Sparse PCA seeks principal components with very few nonzero components. For given data matrix $A\in\br^{m\times n}$, the sparse PCA that seeks the leading $r$ $(r<\min\{m,n\})$ sparse loading vectors can be formulated as
\be\label{spca}
\ba{ll}
\min_{X\in\br^{n\times r}} & -\Tr(X^\top A^\top A X) + \mu\|X\|_1 \\
\st  & X^\top X = I_r,
\ea
\ee
where $\Tr(Y)$ denotes the trace of matrix $Y$, the $\ell_1$ norm is defined as $\|X\|_1=\sum_{ij}|X_{ij}|$, $\mu>0$ is a weighting parameter. 
{This is the original formulation of sparse PCA as proposed by Jolliffe\etal in \cite{Jolliffe2003}, where the model is called SCoTLASS and imposes sparsity and orthogonality to the loading vectors simultaneously. When $\mu=0$, \eqref{spca} reduces to computing the leading $r$ eigenvalues and the corresponding eigenvectors of $A^\top A$. When $\mu>0$, the $\ell_1$ norm $\|X\|_1$ can promote sparsity of the loading vectors. There are many numerical algorithms for solving \eqref{spca} when $r=1$. In this case, \eqref{spca} is relatively easy to solve because $X$ reduces to a vector and the constraint set reduces to a sphere. However, there has been very limited literature for the case $r>1$. Existing works, including \cite{Zou-spca-2006,daspremont-sparsePCA-direct-formulation-2007,Shen-Huang-spca-2008,Journee-Nesterov-sparsePCA-JMLR-2010,Ma-SPCA-2011-submit}, do not impose orthogonal loading directions. As discussed in \cite{Journee-Nesterov-sparsePCA-JMLR-2010}, ``Simultaneously
enforcing sparsity and orthogonality seems to be a hard (and perhaps questionable) task.'' We refer the interested reader to \cite{Zou-Xue-spca-survey-2018} for more details on existing algorithms for solving sparse PCA. As we will discuss later, our algorithm can solve \eqref{spca} with $r>1$ (i.e., imposing sparsity and orthogonality simultaneously) efficiently.}

{\bf Example 2. Compressed Modes in Physics.} This problem seeks spatially localized (``sparse'') solutions of the independent-particle Schr\"odinger's equation. Sparsity is achieved by adding an $L_1$ regularization of the wave functions, which leads to solutions with compact support (``compressed modes''). For 1D free-electron case, after proper discretization, this problem can be formulated as
\be\label{CM}
\ba{ll}
\min_{X\in\br^{n\times r}} & \Tr(X^\top H X) + \mu\|X\|_1 \\
\st  & X^\top X = I_r,
\ea
\ee
where $H$ denotes the discretized Schr\"odinger operator. Note that the $L_1$ regularization reduces to the $\ell_1$ norm of $X$ after discretization. We refer the reader to \cite{Ozolins2013} for more details of this problem. Note that \eqref{spca} and \eqref{CM} are different in the way that $H$ and $A^\top A$ have totally different structures. {In particular, $H$ is the discretized Schr\"odinger Hamiltonian, which is a block circulant matrix, while $A$ in \eqref{spca} usually comes from statistical data and thus $A^\top A$ is usually dense and unstructured.} These differences may affect the performance of algorithms for solving them.

{\bf Example 3. Unsupervised Feature Selection.} It is much more difficult to select the discriminative features  {in unsupervised learning than supervised learning}. There are some recent works that model this task as a manifold optimization problem in the form of \eqref{prob-g}. For instance, \cite{Yang2011} and \cite{Tang2012} assume that there is a linear classifier $W$ which classifies each data point $x_i$ (where $i=1,\ldots,n$) in the training data set to a class, and by denoting $G_i=W^\top x_i$, $[G_1,\ldots,G_n]$ gives a scaled label matrix which can be used to define some local discriminative scores. The target is to train a $W$ such that the local discriminative scores are the highest for all the training data $x_1,\ldots,x_n$. It is suggested in \cite{Yang2011} and \cite{Tang2012} to solve the following model to find $W$:
\[ 
\ba{ll}
\min_{W\in\br^{n\times r}} & \Tr(W^\top MW) + \mu\|W\|_{2,1} \\
\st  & W^\top W = I_r,
\ea
\] 
where $M$ is a given matrix computed from the input data, the $\ell_{2,1}$ norm is defined as $\|W\|_{2,1}=\sum_{i=1}^n\|W(i,:)\|_2$ with $W(i,:)$ being the $i$-th row of $W$, which promotes the row sparsity of $W$, and the orthogonal constraint is imposed to avoid arbitrary scaling and the trivial solution of all zeros. We refer the reader to \cite{Yang2011} and \cite{Tang2012} for more details.


{\bf Example 4. Sparse Blind Deconvolution.} Given the observations
\[y = a_0\circledast x_0\in\br^m,\]
how can one recover both the convolution kernel $a_0\in\br^k$ and signal $x_0\in\br^m$? Here $x_0$ is assumed to have a sparse and random support and $\circledast$ denotes the convolution operator. This problem is known as sparse blind deconvolution. Some recent works on this topic suggest the following optimization formulation to recover $a_0$ and sparse $x_0$ (see, e.g., \cite{Zhang-cvpr-2017}):
\[ 
\ba{ll}
\min_{a,x} & \|y - a\circledast x\|_2^2 + \mu\|x\|_1 \\
\st        & \|a\|_2 = 1.
\ea
\] 
Note that the sphere constraint here is a special case of the Stiefel manifold; i.e., $\St(k,1)$.

{\bf Example 5. Nonconvex Regularizer.} Problem \eqref{prob-g} also allows nonconvex regularizer functions. For example, instead of using the $\ell_1$ norm to promote sparsity, we can use the MCP (minimax concave penalty) function \cite{Zhang-MCP-2010}, which has been widely used in statistics. The MCP function is nonconvex and is given by
\[ 
P(x) = \left\{\ba{ll} \lambda |x| - \frac{x^2}{2\lambda}, & \mbox{ if } |x|\leq\gamma\lambda, \\
                                \frac{1}{2}\gamma\lambda^2, & \mbox{ otherwise}, \ea\right.
\] 
where $\lambda$ and $\gamma$ are given parameters, and $x\in\br$. If we replace the $\ell_1$ norm in sparse PCA \eqref{spca} by MCP, it reduces to
\be\label{spca-MCP}
\ba{ll}
\min_{X\in\br^{n\times r}} & -\Tr(X^\top A^\top A X) + \mu\sum_{ij}P(X_{ij}) \\
\st  & X^\top X = I_r.
\ea
\ee
It is easy to see that the objective function in \eqref{spca-MCP} can be rewritten as $f_1(X)+f_2(X)$, with $f_1(X) = -\Tr(X^\top A^\top A X) + \mu(\sum_{ij}P(X_{ij}) - \lambda\|X\|_1)$ and $f_2(X) = \mu\lambda\|X\|_1$. Note that $f_1$ is smooth and its gradient is Lipschitz continuous. Therefore, \eqref{spca-MCP} is an instance of \eqref{prob-g}.

{\bf Our Contributions.} Due to the needs of the above-mentioned applications, it is highly desirable to design an efficient algorithm for solving \eqref{prob-g}. In this paper, we propose a proximal gradient method for solving it. The proposed method, named ManPG (Manifold Proximal Gradient Method), is based on the proximal gradient method with a retraction operation to keep the iterates feasible with respect to the manifold constraint. Each step of ManPG involves solving a well-structured convex optimization problem, which can be done efficiently by the semi-smooth Newton method. We prove that ManPG converges to a stationary point of \eqref{prob-g} globally. We also analyze the iteration complexity of ManPG for obtaining an $\epsilon$-stationary point. Numerical results on sparse PCA \eqref{spca} and compressed modes \eqref{CM} problems show that our ManPG algorithm compares favorably with existing methods. 

{\bf Notation.} The following notation is adopted throughout this paper. The tangent space to $\M$ at point $X$ is denoted by $\T_X \M$. We use $\inp{A}{B}=\Tr(A^\top B)$ to denote the Euclidean inner product of two matrices $A,B$.  {We consider the Riemannian metric on $\M$ that is induced from the Euclidean inner product; i.e, for any $\xi,\eta \in \T_X\M$, we have $\inp{\xi}{\eta}_X = \Tr(\xi^\top\eta)$.} 
We use $\normfro{X}$ to denote the Frobenius norm of $X$ and $\normtwo{\A}_{op}$ to denote the operator norm of a linear operator $\A$. The Euclidean gradient of a smooth function $f$ is denoted as $\nabla f$ and the Riemannian gradient of $f$ is denoted as $\grad\,f$. Note that by our choice of the Riemannian metric, we have $\grad\,f(X)=\Proj_{\T_X\M}\nabla f(X)$, the orthogonal projection of $\nabla f(X)$ onto the tangent space.
 {According to \cite{Absil2009}, the projection of $Y$ onto the tangent space at $X\in \St(n,r)$ is given by $\Proj_{T_{X}\St(n,r)}=(I_n-XX^\top) Y+\half X(X^\top Y - Y^\top X)$}. We use $\Retr$ to denote the retraction operation. {For a convex function $h$, its Euclidean subgradient and Riemannian subgradient are denoted by $\partial h$ and $\hat{\partial}h$, respectively.} We use $\vvec(X)$ to denote the vector formed by stacking the column vectors of $X$. The set of $r\times r$ symmetric matrices is denoted by $S^r$. Given an $X\in S^r$, we use $\svec(X)$ to denote the $\frac{1}{2}r(r+1)$-dimensional vector obtained from $\vvec(X)$ by eliminating all super-diagonal elements of $X$. We denote $Z\succeq 0$ if $(Z+Z^\top)/2$ is positive semidefinite.
The proximal mapping of $h$ at point $X$ is defined by $\prox_{h}(X)=\argmin_{Y}\frac{1}{2}\normfro{Y-X}^2+h(Y)$.

{\bf Organization.} The rest of this paper is organized as follows. In Section \ref{sec:review} we briefly review existing works on solving manifold optimization problems with nonsmooth objective functions. We introduce some preliminaries of manifolds in Section \ref{sec:preliminary}. The main algorithm ManPG and the semi-smooth Newton method for solving the subproblem are presented in Section \ref{sec:ManPG}. In Section \ref{sec:convergence}, we establish the global convergence of ManPG and analyze its iteration complexity for obtaining an $\epsilon$-stationary solution. Numerical results of ManPG on solving compressed modes problems in physics and sparse PCA are reported in Section \ref{sec:numerical}. Finally, we draw some concluding remarks in Section \ref{sec:conclude}.

\section{Nonsmooth Optimization over Riemannian Manifold}\label{sec:review}


Unlike manifold optimization with a smooth objective, which has been studied extensively in the monograph \cite{Absil2009}, the literature on manifold optimization with a nonsmooth objective has been relatively limited. Numerical algorithms for solving manifold optimization with nonsmooth objectives can be roughly classified into three categories: subgradient-oriented methods, proximal point algorithms, and operator-splitting methods. We now briefly discuss the existing works in these three categories.

\subsection{Subgradient-oriented Methods}
Algorithms in the first category include the ones proposed in \cite{ferreira1998subgradient,Borckmans-2014,grohs2016varepsilon,Hosseini-2017-KL,Hosseini-Uschmajew-2017,hosseini2018line,Bacak-2016,Dirr-2006,Grohs-nonsmooth-trust-region-2016}, {which are all subgradient-oriented methods.}  {Ferreira and Oliveria \cite{ferreira1998subgradient} studied the convergence of subgradient method for minimizing a convex function over a Riemannian manifold. The subgradient method generates the iterates via
\[ 
X_{k+1} = \text{exp}_{X_k}(t_k V_k),
\] 
where $\text{exp}_{X_k}$ is the exponential mapping at $X_k$ and $V_k$ denotes a Riemannian subgradient of the objective. Like the subgradient method in Euclidean space, the stepsize $t_k$ is chosen to be diminishing to guarantee convergence. However, the result in \cite{ferreira1998subgradient} does not apply to \eqref{prob-g} because it is known that every smooth function that is convex on a compact Riemannian manifold is a constant \cite{bishop1969manifolds}.} This motivated some more advanced works on Riemannian subgradient method. Specifically, Dirr\etal\cite{Dirr-2006} and Borckmans\etal\cite{Borckmans-2014} proposed subgradient methods on manifold  {for the case where the objective function is the pointwise maximum of smooth functions. In this case, some generalized gradient can be computed and a descent direction can be found by solving a quadratic program.} Grohs and Hosseini \cite{grohs2016varepsilon} proposed a Riemannian $\varepsilon$-subgradient method. Hosseini and Uschmajew \cite{Hosseini-Uschmajew-2017} proposed a Riemannian gradient sampling algorithm.  {Hosseini\etal\cite{hosseini2018line} generalized the Wolfe conditions and extended the BFGS algorithm to nonsmooth functions on Riemannian manifolds.} Grohs and Hosseini \cite{Grohs-nonsmooth-trust-region-2016} generalized a nonsmooth trust region method to manifold optimization. Hosseini \cite{Hosseini-2017-KL} studied the convergence of some subgradient-oriented descent methods based on the Kurdyka-{\L}ojasiewicz (K{\L}) inequality.  {Roughly speaking, all the methods studied in \cite{Dirr-2006,Borckmans-2014,grohs2016varepsilon,Hosseini-Uschmajew-2017,hosseini2018line,Grohs-nonsmooth-trust-region-2016,Hosseini-2017-KL} require subgradient information to build a quadratic program to find a descent direction:
\be\label{subproblem:subgrad}
\hat{g}\longleftarrow\min_{g\in \text{conv}(W)} \normtwo{g},
\ee
where $\text{conv}(W)$ denotes the convex hull of set $W=\{G_j,j=1,\ldots,J\}$, $G_j$ is {the Riemannian gradient of a differentiable point around} the current iterate $X$, and $J$ usually needs to be larger than the dimension of $\M$. Subsequently, the iterate $X$ is updated by $X^+= \Retr_{X}(\alpha \hat{g})$, where the stepsize $\alpha$ is found by line search. For high-dimensional problems on the Stiefel manifold $\St(n,r)$, \eqref{subproblem:subgrad} can be difficult to solve because $n$ is large.} Since subgradient algorithm is known to be slower than the gradient algorithm and proximal gradient algorithm in Euclidean space, it is expected that these subgradient-based algorithms are not as efficient as gradient algorithms and proximal gradient algorithms on manifold in practice.

\subsection{Proximal Point Algorithms}
Proximal point algorithms (PPAs) for solving manifold optimization are also studied in the literature. Ferreira and Oliveira \cite{Ferreira-Oliveira-PPA-Manifold-2002} extended PPA to manifold optimization, which in each iteration needs to minimize the original function plus a proximal term over the manifold. However, there are two issues that limit its applicability. The first is that the subproblem can be as difficult as the original problem. For example, Bacak\etal\cite{Bacak-2016} suggested to use the subgradient method to solve the subproblem, {but they require the subproblem to be in the form of the pointwise maximum of smooth functions tackled in \cite{Borckmans-2014}.} 
The second is that the discussions in the literature mainly focus on the Hadamard manifold and exploit heavily the convexity assumption of the objective function. Thus, they do not apply to compact manifolds such as $\St(n,r)$. Bento\etal\cite{Bento-PPA-manifold-2016} aimed to resolve the second issue and proved the convergence of the PPA for more general Riemannian manifolds under the assumption that the K{\L} inequality holds for the objective function. In \cite{Bento-convergence-inexact-descent-2011}, Bento\etal analyzed the convergence of some inexact descent methods based on the K{\L} inequality, including the PPA and steepest descent method. In a more recent work~\cite{Bento-iteration-complexity-2017}, Bento\etal studied the iteration complexity of PPA under the assumption that the constraint set is the Hadamard manifold and the objective function is convex. 
Nevertheless, the results in \cite{Ferreira-Oliveira-PPA-Manifold-2002,Bento-PPA-manifold-2016,Bento-convergence-inexact-descent-2011,Bento-iteration-complexity-2017} seem to be of theoretical interest only because no numerical results were shown. As mentioned earlier, this could be due to the difficulty in solving the PPA subproblems. 

\subsection{Operator Splitting Methods}
Operator-splitting methods do not require subgradient information, and existing works in the literature mainly focus on the Stiefel manifold. Note that \eqref{prob-g} is challenging because of the combination of two difficult terms: Riemannian manifold and nonsmooth objective. If only one of them is present, then the problem is relatively easy to solve. Therefore, the alternating direction method of multipliers (ADMM) becomes a natural choice for solving \eqref{prob-g}. ADMM for solving convex optimization problems with two block variables is closely related to the famous Douglas-Rachford operator splitting method, which has a long history \cite{Glowinski-Marrocco-1975,Gabay-Mercier-1976,Lions-Mercier-79,Fortin-Glowinski-1983,Glowinski-LeTallec-89,Eckstein-thesis-89}. The renaissance of ADMM was initiated by several papers around 2007-2008, where it was successfully applied to solve various signal processing~\cite{Combettes-Pesquet-DR-2007} and image processing problems \cite{Yang-Yin-Zhang-Wang-08,Goldstein-Osher-2008,Afonso-BD-Figueiredo-2009}.
The recent survey paper \cite{Boyd-etal-ADM-survey-2011} popularized this method in many areas. Recently, there have been some emerging interests in ADMM for solving manifold optimization of the form \eqref{prob-g}; see, e.g., \cite{Lai-Osher-soc-2014,Kovnatsky2016,Zhang-Ma-Zhang-manifold-2017,WangYinZeng15}. However, the algorithms presented in these papers either lack convergence guarantee (\cite{Lai-Osher-soc-2014,Kovnatsky2016}) or their convergence needs further conditions that do not apply to \eqref{prob-g} (\cite{WangYinZeng15,Zhang-Ma-Zhang-manifold-2017}).


Here we briefly describe the SOC method (Splitting method for Orthogonality Constrained problems) presented in \cite{Lai-Osher-soc-2014}.
The SOC method aims to solve
\[ 
\min \ J(X), \ \st, \ X \in\M
\] 
by introducing an auxiliary variable $P$ and considering the following reformulation:
\be\label{soc-prob-reform}
\min \ J(P), \ \st, P = X, X \in\M.
\ee
By associating a Lagrange multiplier $\Lambda$ to the linear equality constraint, the augmented Lagrangian function of \eqref{soc-prob-reform} can be written as
\[\LCal_\beta(X,P;\Lambda):= J(P)-\langle \Lambda,P-X\rangle + \frac{\beta}{2}\|P-X\|_F^2,\]
where $\beta>0$ is a penalty parameter.
The SOC algorithm then generates its iterates as follows:
\[ 
\ba{lll}
P^{k+1} & := & \argmin_P \ \LCal_\beta(P,X^k;\Lambda^k), \\
X^{k+1} & := & \argmin_X \ \LCal_\beta(P^{k+1},X;\Lambda^k), \st, X\in\M,\\
\Lambda^{k+1} & := & \Lambda^k - \beta(P-X).
\ea
\] 
Note that the $X$-subproblem corresponds to the projection onto $\M$, and the $P$-subproblem is an unconstrained problem whose complexity depends on the structure of $J$. In particular, if $J$ is smooth, then the $P$-subproblem can be solved iteratively by the gradient method; if $J$ is nonsmooth and has an easily computable proximal mapping, then the $P$-subproblem can be solved directly by computing the proximal mapping of $J$.

The MADMM (manifold ADMM) algorithm presented in \cite{Kovnatsky2016} aims to solve the following problem:
\be\label{madmm-reform}
\min_{X,Z} \ f(X)+g(Z), \ \st, \ Z = AX, \ X\in \St(n,r),
\ee
where $f$ is smooth and $g$ is nonsmooth with an easily computable proximal mapping. The augmented Lagrangian function of \eqref{madmm-reform} is
\[\LCal_\beta(X,Z;\Lambda):=f(X)+g(Z)-\langle\Lambda,Z-AX\rangle+\frac{\beta}{2}\|Z-AX\|_F^2 \]
and the MADMM algorithm generates its iterates as follows:
\[ 
\ba{lll}
X^{k+1} & := & \argmin_X \ \LCal_\beta(X,Z^{k};\Lambda^{k}), \ \st, X\in \St(n,r), \\
Z^{k+1} & := & \argmin_Z \ \LCal_\beta(X^{k+1},Z;\Lambda^{k}), \\
\Lambda^{k+1} & := & \Lambda^{k} - \beta(Z^{k+1}-AX^{k+1}).
\ea
\] 
Note that the $X$-subproblem is a smooth optimization problem on the Stiefel manifold, and the authors suggested to use the Manopt toolbox \cite{Boumal2014} to solve it. The $Z$-subproblem corresponds to the proximal mapping of function $g$.

As far as we know, however, the convergence guarantees of SOC and MADMM are still missing in the literature. Though there are some recent works that analyze the convergence of ADMM for nonconvex problems \cite{WangYinZeng15,Zhang-Ma-Zhang-manifold-2017}, their results need further conditions that do not apply to \eqref{prob-g} and its reformulations \eqref{soc-prob-reform} and \eqref{madmm-reform}.

More recently, some other variants of the augmented Lagrangian method are proposed to deal with \eqref{prob-g}. In \cite{Chen2016}, Chen\etal proposed a PAMAL method which hybridizes an augmented Lagrangian method with the proximal alternating minimization method \cite{attouch2010proximal}. More specifically, PAMAL solves the following reformulation of \eqref{prob-g}:
\be\label{pamal-reform}
\min_{X,Q,P} \ f(P) + h(Q), \ \st,\ Q = X, P = X, X \in \St(n,r).
\ee
By associating Lagrange multipliers $\Lambda_1$ and $\Lambda_2$ to the two linear equality constraints, the augmented Lagrangian function of \eqref{pamal-reform} can be written as
\[\LCal_\beta(X,Q,P;\Lambda_1,\Lambda_2):=f(P)+h(Q)-\langle\Lambda_1,Q-X\rangle-\langle\Lambda_2,P-X\rangle+\frac{\beta}{2}\|Q-X\|_F^2+\frac{\beta}{2}\|P-X\|_F^2,\]
where $\beta>0$ is a penalty parameter. The augmented Lagrangian method for solving \eqref{pamal-reform} is then given by
\be\label{pamal}
\ba{rll}
(X^{k+1},Q^{k+1},P^{k+1}) & := & \argmin_{X,Q,P} \ \LCal_\beta(X,Q,P;\Lambda_1^{k},\Lambda_2^{k}), \ \st, \ X\in \St(n,r), \\
\Lambda_1^{k+1} & := & \Lambda_1^{k} - \beta(Q^{k+1}-X^{k+1}), \\
\Lambda_2^{k+1} & := & \Lambda_2^{k} - \beta(P^{k+1}-X^{k+1}).
\ea
\ee
Note that the subproblem in \eqref{pamal} is still difficult to solve. Therefore, the authors of \cite{Chen2016} suggested to use the proximal alternating minimization method \cite{attouch2010proximal} to solve the subproblem in \eqref{pamal} inexactly. They named the augmented Lagrangian method \eqref{pamal} with subproblems being solved by the proximal alternating minimization method as PAMAL. They proved that under certain conditions, any limit point of the sequence generated by PAMAL is a KKT point of \eqref{pamal-reform}. It needs to be pointed out that the proximal alternating minimization procedure involves many parameters that need to be tuned in order to solve the subproblem inexactly. Our numerical results in Section \ref{sec:numerical} indicate that the performance of PAMAL significantly depends on the setting of these parameters.

In \cite{Zhu2017}, Zhu\etal studied another algorithm called EPALMAL for solving \eqref{prob-g} that is based on the augmented Lagrangian method and the PALM algorithm \cite{Bolte-Sabach-Teboulle-2014}. The difference between EPALMAL and PAMAL is that they use different algorithms to minimize the augmented Lagrangian function inexactly. In particular, EPALMAL uses the PALM algorithm \cite{Bolte-Sabach-Teboulle-2014}, while PAMAL uses PAM \cite{attouch2010proximal}. It is also shown in \cite{Zhu2017} that any limit point of the sequence generated by EPALMAL is a KKT point. However, their result assumes that the iterate sequence is bounded, which holds automatically if the manifold in question is bounded but is hard to verify otherwise.

\section{Preliminaries on Manifold Optimization }\label{sec:preliminary}

{We first introduce the elements of manifold optimization that will be needed in the study of~\eqref{prob-g}. In fact, our discussion in this section applies to the case where $\M$ is any embedded submanifold of an Euclidean space.
To begin, we say that a function $F$ is locally Lipschitz continuous if for any $X\in\M$, it is Lipschitz continuous in a neighborhood of $X$. Note that if $F$ is locally Lipschitz continuous in the Euclidean space $\mathcal{E}$, then it is also locally Lipschitz continuous when restricted to the embedded submanifold $\M$ of $\mathcal{E}$.

\begin{definition}(Generalized Clarke subdifferential \cite{hosseini2011generalized})
For a locally Lipschitz function $F$ on $\M$, the Riemannian generalized directional derivative of $F$ at $X\in\M$ in the direction $V$ is defined by
\[ 
F^{\circ}(X,V) =\limsup\limits_{Y\rightarrow X,t\downarrow 0}\frac{F\circ \phi^{-1}(\phi(Y)+tD\phi(X )[V])-f\circ \phi^{-1}(\phi(Y))}{t},
\] 
where $(\phi,U)$ is a coordinate chart at $X$.
The generalized gradient or the Clarke subdifferential of $F$ at $X\in\M$, denoted by $\hat{\partial} F(X)$, is given by
\[ 
\hat{\partial} F(X)=\{\xi\in \T_X\M :\inp{\xi}{V}\leq F^{\circ}(X,V), \ \forall V\in \T_X\M \}.
\] 
\end{definition}
\begin{definition}(\cite{Yang-manifold-optimality-2014})
A function $f$ is said to be regular at $X\in\M$ along $\T_X\M$ if
	\begin{itemize}
		\item for all $V\in \T_X\M$, $f'(X;V)=\lim_{t\downarrow 0}\displaystyle \frac{f(X+tV)-f(X)}{t}$ exists, and
		\item for all $V\in \T_X\M$, $f'(X;V) = f^\circ (X;V)$.
	\end{itemize}
\end{definition}
For a smooth function $f$, we know that $\grad\,f(X)= \Proj_{\T_X\M} \nabla f(X)$  {by our choice of the Riemannian metric}. According to Theorem 5.1 in \cite{Yang-manifold-optimality-2014}, for a regular function $F$, we  have $\hat{\partial}F(X)=\Proj_{\T_X\M}(\partial F(X)).$ Moreover, the function $F(X)=f(X)+h(X)$ in problem \eqref{prob-g} is regular according to Lemma 5.1 in \cite{Yang-manifold-optimality-2014}. Therefore, we have $\hat{\partial}F(X)=\Proj_{\T_X\M}(\nabla f(X)+\partial h(X))=\grad f(X)+ \Proj_{\T_X\M}(\partial h(X))$. By Theorem 4.1 in \cite{Yang-manifold-optimality-2014}, the first-order necessary condition of problem \eqref{prob-g} is given by $0\in \grad f(X)+ \Proj_{\T_X\M}(\partial h(X))$.
\begin{definition}\label{stationary_point}
A point $X\in\M$ is called a stationary point of problem \eqref{prob-g} if it satisfies the first-order necessary condition; i.e., $0\in \grad f(X)+ \Proj_{\T_X\M}(\partial h(X))$.
\end{definition}
}
 {A classical geometric concept in the study of manifolds is that of an exponential mapping, which defines a geodesic curve on the manifold. However, the exponential mapping is difficult to compute in general. The concept of a retraction \cite{Absil2009}, which is a first-order approximation of the exponential mapping and can be more amenable to computation, is given as follows.}
\begin{definition}\label{def_retraction}\cite[Definition 4.1.1]{Absil2009}
A retraction on a differentiable manifold $\mathcal{M}$ is a smooth mapping $\Retr$ from the tangent bundle $\T\mathcal{M}$ onto $\mathcal{M}$ satisfying the following two conditions (here $\Retr_X$ denotes the restriction of $\Retr$ onto $\T_X \mathcal{M}$):
\begin{enumerate}
\item $\Retr_X(0)=X, \forall X\in\M$, where $0$ denotes the zero element of $\T_X\mathcal{M}$.
\item For any $X\in\M$, it holds that
    \[\lim_{\T_X\M\ni\xi\rightarrow 0}\frac{\|\Retr_X(\xi)-(X+\xi)\|_F}{\|\xi\|_F} = 0.\]
\end{enumerate}
\end{definition}
\begin{remark}
Since $\M$ is an embedded submanifold of $\R^{n\times r}$, we can treat $X$ and $\xi$ as elements in $\R^{n\times r}$ and hence their sum is well defined. The second condition in Definition~\ref{def_retraction} ensures that $\Retr_{X}(\xi)  =X +\xi +\mathcal{O}(\normfro{\xi}^2)$ and $D\Retr_{X}(0)=\Id$, where $D\Retr_{X}$ is the differential of $\Retr_X$ and $\Id$ denotes the identity mapping. For more details about retraction, we refer the reader to \cite{Absil2009,Boumal2016} and the references therein.
\end{remark}
The retraction onto the Euclidean space is simply the identity mapping; i.e., $\Retr_X(\xi)=X+\xi$. For the Stiefel manifold $\St(n,r)$, common retractions
include the exponential mapping \cite{EdelmanAriasSmith1999}
\[\Retr_X^{\mathrm{exp}}(t\xi) = [X,Q] \exp\left(t\begin{bmatrix}
-X^\top\xi & -R^\top \\
R&0
\end{bmatrix}\right)\begin{bmatrix}
I_r\\0
\end{bmatrix}, \]
where $QR=-(I_n-XX^\top)\xi$ is the unique QR factorization; the polar decomposition
\[\Retr_{X}^{\mathrm{polar}}(\xi)=(X+\xi)(I_r +\xi^\top\xi)^{-1/2};\]
the QR decomposition
\[\Retr_{X}^{\mathrm{QR}}(\xi)=\mathrm{qf}(X+\xi),\]
where $\mathrm{qf}(A)$ is the $Q$ factor of the QR factorization of $A$;
the Cayley transformation \cite{Wen-Yin-2013}
\[\Retr_{X}^{\mathrm{cayley}}(\xi)=\left(I_n-\frac{1}{2}W(\xi)\right)^{-1}\left(I_n+\frac{1}{2}W(\xi)\right)X,\]
where $W(\xi)=(I_n-\frac{1}{2}XX^\top)\xi X^\top-X\xi^\top(I_n-\frac{1}{2}XX^\top)$.

For any matrix $Y\in \R^{n\times r}$ with $r\leq n$, its orthogonal projection onto the Stiefel manifold $\St(n,r)$ is given by $UI_{r}V^\top$, where $U,V$ are the left and right singular vectors of $Y$, respectively. If $Y$ has full rank, then the projection can be computed by $Y(Y^\top Y)^{-1/2}$, which is the same as the polar decomposition. The total cost of computing the projection $UI_rV^\top$ is $8nr^2+\mathcal{O}(r^3)$ flops, where the SVD needs $6nr^2+\mathcal{O}(r^3)$ flops \cite{golub2012matrix} and the formation of $UI_rV^\top$ needs $2nr^2$ flops. By comparison, if $Y=X+\xi$ and $\xi\in T_X\M$, then the exponential mapping takes $8nr^2+\mathcal{O}(r^3)$ flops and the polar decomposition takes $3nr^2 +\mathcal{O}(r^3)$ flops, where $\xi^\top\xi$  needs $nr^2$ flops and the remaining $2nr^2+ \mathcal{O}(r^3)$ flops come from the final assembly. Thus, polar decomposition is cheaper than  the projection. Moreover, the QR decomposition of $X+\xi$ takes $2nr^2 + \mathcal{O}(r^3)$ flops. For the Cayley transformation of $X+\xi$, the total cost is $7nr^2+\mathcal{O}(r^3)$ \cite{Wen-Yin-2013,jiang2015framework}. In our algorithm that will be introduced later, we need to perform one retraction operation in each iteration. We need to point out that retractions may also affect the overall convergence speed of the algorithm. As a result, determining the most efficient retraction used in the algorithm is still an interesting question to investigate in practice; see also the discussion after Theorem 3 of \cite{Liu-So-Wu-2018}.



The retraction $\Retr$ has the following properties that are useful for our convergence analysis:
{{
\begin{fact}\label{retraction:property} (\cite{Boumal2016,Liu-So-Wu-2018})
Let ${\M}$ be a compact embedded submanifold of an Euclidean space. For all $X\in {\M}$ and $\xi \in \T_X\M$, there exist constants $M_1>0$ and $M_2>0$ such that the following two inequalities hold:
\be\label{first-bounded}
\normfro{R_X(\xi)-X}\leq M_1\normfro{\xi}, \quad\forall X\in {\M}, \xi \in \T_X\M,
\ee
\be\label{Second-bounded}
\normfro{R_X(\xi)-(X+\xi)}\leq M_2\normfro{\xi}^2, \quad\forall X\in {\M}, \xi \in \T_X\M.
\ee
\end{fact}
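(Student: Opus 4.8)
The plan is to split the tangent vectors into a ``small'' regime $\normfro{\xi}\le 1$, where the estimates come from a second-order Taylor expansion of the smooth retraction, and a ``large'' regime $\normfro{\xi}>1$, where both inequalities become essentially trivial because compactness of $\M$ bounds their left-hand sides. The two regimes must be treated separately precisely because the tangent bundle $\T\M$ is \emph{not} compact: although $\M$ itself is bounded, each fiber $\T_X\M$ is an unbounded linear space, so one cannot simply invoke ``a continuous function on a compact set is bounded'' over all of $\T\M$.

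For the small regime I would view $(X,\xi)\mapsto\Retr_X(\xi)$ as a smooth map into the ambient space $\E=\R^{n\times r}$ and use the two facts recorded in the Remark, namely $\Retr_X(0)=X$ and $D\Retr_X(0)=\Id$. Taylor's theorem with integral remainder for $\xi\mapsto\Retr_X(\xi)$ at $\xi=0$ then gives
\[\Retr_X(\xi)-X-\xi=\int_0^1(1-t)\,D^2\Retr_X(t\xi)[\xi,\xi]\,dt,\]
so that $\normfro{\Retr_X(\xi)-(X+\xi)}\le\tfrac12 C\normfro{\xi}^2$, where $C$ bounds $\normfro{D^2\Retr_X(\eta)}$ over the set $K=\{(X,\eta):X\in\M,\ \eta\in\T_X\M,\ \normfro{\eta}\le 1\}$. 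The crucial point is that $K$ is a \emph{compact} subset of $\T\M$ (the unit-ball bundle over the compact base $\M$) and $D^2\Retr$ is continuous there, so this supremum $C$ is finite; this yields \eqref{Second-bounded} with $M_2=C/2$ on $\{\normfro{\xi}\le 1\}$. The same expansion gives $\normfro{\Retr_X(\xi)-X}\le\normfro{\xi}+\tfrac{C}{2}\normfro{\xi}^2\le(1+\tfrac{C}{2})\normfro{\xi}$ on this set, establishing \eqref{first-bounded} there.

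For the large regime $\normfro{\xi}>1$, let $D=\max_{X,Y\in\M}\normfro{X-Y}$ denote the finite diameter of $\M$. Since $\Retr_X(\xi)\in\M$, one has $\normfro{\Retr_X(\xi)-X}\le D\le D\normfro{\xi}$, extending \eqref{first-bounded}; and $\normfro{\Retr_X(\xi)-(X+\xi)}\le D+\normfro{\xi}\le(D+1)\normfro{\xi}^2$ using $\normfro{\xi}>1$, extending \eqref{Second-bounded}. Taking $M_1=\max\{1+C/2,\,D\}$ and $M_2=\max\{C/2,\,D+1\}$ then covers all $\xi$.

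The main obstacle, as noted, is the non-compactness of $\T\M$: the whole argument hinges on isolating the genuinely analytic content, the second-order Taylor estimate, to the compact unit-ball bundle $K$ where continuity of $D^2\Retr$ supplies a uniform constant, and on recognizing that for large $\xi$ the conclusions follow for free from boundedness of $\M$. A secondary point worth verifying is that $D^2\Retr_X(\eta)$ is a well-defined, jointly continuous quantity on $\T\M$; this is where one uses that $\Retr$ is smooth on the tangent bundle and that $\T_X\M$ is a linear subspace of $\E$, so the fiberwise derivatives may be taken in the ambient linear structure.
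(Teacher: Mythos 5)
This Fact is not proved in the paper at all: it is recorded as a citation-only statement imported from \cite{Boumal2016,Liu-So-Wu-2018}, so there is no in-paper argument to compare yours against; what you have written is a correct, self-contained proof, and it is essentially the standard compactness argument underlying the cited results. Your two-regime split is exactly the right response to the obstruction you identify (the tangent bundle is not compact even when $\M$ is): on the unit-ball bundle $K=\{(X,\eta):X\in\M,\ \eta\in\T_X\M,\ \normfro{\eta}\leq 1\}$, which is compact because it is a closed and bounded subset of $\E\times\E$ (closedness uses the continuity of $X\mapsto\Proj_{\T_X\M}$), the integral-form Taylor remainder together with $\Retr_X(0)=X$ and $D\Retr_X(0)=\Id$ gives \eqref{Second-bounded}, and \eqref{first-bounded} follows from it by the triangle inequality; for $\normfro{\xi}>1$ both bounds are trivial consequences of the finite diameter of $\M$, and your final constants $M_1=\max\{1+C/2,\,D\}$, $M_2=\max\{C/2,\,D+1\}$ do work. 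Two points should be made explicit in a complete write-up. First, the large-$\xi$ regime uses that $\Retr_X(\xi)$ is defined for \emph{every} tangent vector; this is legitimate here because Definition~\ref{def_retraction} takes a retraction to be defined on the whole tangent bundle, but the argument would need restating for retractions defined only on a neighborhood of the zero section. Second, the joint continuity of the fiberwise second derivative $(X,\eta)\mapsto D^2\Retr_X(\eta)$, which you correctly flag as the remaining technical content, is most cleanly handled by covering $\M$ with finitely many coordinate patches $U$ carrying smooth frames $E_1(X),\dots,E_d(X)$ of $\T_X\M$, writing $(X,a)\mapsto \Retr_X\bigl(\textstyle\sum_i a_i E_i(X)\bigr)$ as a smooth map on $U\times\R^d$, and bounding its second derivative in $a$ over $\normtwo{a}\leq 1$; with that detail supplied, the proof is complete.
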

}}

\section{Proximal Gradient Method on the Stiefel Manifold}\label{sec:ManPG}

\subsection{The ManPG Algorithm}
For manifold optimization problems with a smooth objective, the Riemannian gradient method \cite{Abrudan2008,Absil2009,Nishimori2005} has been one of the main methods of choice. A generic update formula of the Riemannian gradient method for solving
\[\min_X \ F(X), \ \st, \ X\in\M\]
is
\[ 
 X_{k+1} := \Retr_{X_k}(\alpha_k V_k),
\] 
where $F$ is smooth, $V_k$ is a descent direction of $F$ in the tangent space $\T_{X_k}\M$, and $\alpha_k$ is a step size.
Recently, Boumal\etal\cite{Boumal2016} established the sublinear rate of the Riemannian gradient method for returning a point $X_k$ satisfying $\|\grad\,F(X_k)\|_F < \epsilon$. Liu\etal\cite{Liu-So-Wu-2018} proved that the Riemannian gradient method converges linearly for quadratic minimization over the Stiefel manifold. Other methods for solving manifold optimization problems with a smooth objective have also been studied in the literature, including the conjugate gradient methods \cite{Absil2009,Abrudan2009}, trust region methods \cite{Absil2009,Boumal2016}, and Newton-type methods \cite{Absil2009,Savas2010}.

 {We now develop our ManPG algorithm for solving \eqref{prob-g}. Since the objective function in \eqref{prob-g} has a composite structure, a natural idea is to extend the proximal gradient method from the Euclidean setting to the manifold setting. The proximal gradient method for solving $\min_X F(X):=f(X)+h(X)$ in the Euclidean setting generates the iterates as follows:
\be\label{alg:proximal_euclidean}
X_{k+1}:=\argmin_Y f(X_k) +\inp{\nabla f(X_k)}{Y-X_k} + \frac{1}{2t}\normfro{Y-X_k}^2 + h(Y).
\ee
In other words, one minimizes the quadratic model $Y \mapsto f(X_k) +\inp{\nabla f(X_k)}{Y-X_k} + \frac{1}{2t}\normfro{Y-X_k}^2 + h(Y)$ of $F$ at $X_k$ in the $k$-th iteration, where $t>0$ is a parameter that can be regarded as the stepsize. It is known that the quadratic model is an upper bound of $F$ when $t\leq 1/L$, where $L$ is the Lipschitz constant of $\nabla f$.
The subproblem \eqref{alg:proximal_euclidean} corresponds to the proximal mapping of $h$ and the efficiency of the proximal gradient method relies on the assumption that \eqref{alg:proximal_euclidean} is easy to solve.} 
For \eqref{prob-g}, in order to deal with the manifold constraint, we need to ensure that the descent direction lies in the tangent space. This motivates the following subproblem for finding the descent direction $V_k$ in the $k$-th iteration:
\be\label{tangent-subproblem1-R}
\ba{rl}
V_k :=\argmin_V & \inp{\grad\,f(X_k)}{V} + \frac{1}{2t}\normtwo{V}_F^2+h(X_k+V) \\
         \st   & V\in \T_{X_k}\M, 
\ea
\ee
where $t>0$ is the stepsize.  {Here and also in the later discussions, we can interpret $X_k+V$ as the sum of $X_k$ and $V$ in the ambient Euclidean space $\R^{n\times r}$, as $\M$ is an embedded submanifold of $\R^{n\times r}$.} Note that \eqref{tangent-subproblem1-R} is different from \eqref{alg:proximal_euclidean} in two places: (i) the Euclidean gradient $\nabla f$ is changed to the Riemannian gradient $\grad\,f$; (ii) the descent direction $V_k$ is restricted to the tangent space. 
Following the definition of $\grad\,f$, we have
\[ 
\inp{\grad f(X_k)}{V}=\inp{\nabla f(X_k)}{V}, \quad \forall V\in \T_{X_k}\M,
\] 
which implies that \eqref{tangent-subproblem1-R} can be rewritten as
\be\label{tangent-subproblem-R}
\ba{rl}
V_k :=\argmin_V & \inp{\nabla f(X_k)}{V} +\frac{1}{2t}\normtwo{V}_F^2+h(X_k+V)\\
\st    & V\in \T_{X_k}\M. 
\ea
\ee
As a result, we do not need to compute the Riemannian gradient $\grad\,f$. Rather, only the Euclidean gradient $\nabla f$ is needed.
Note that without considering the constraint $V\in\T_{X_k}\M$, \eqref{tangent-subproblem-R} computes a proximal gradient step. Therefore, \eqref{tangent-subproblem-R} can be viewed as a proximal gradient step restricted to the tangent space $\T_{X_k}\M$. Since for an arbitrary stepsize $\alpha_k>0$, $X_k+\alpha_kV_k$ does not necessarily lie on the manifold $\M$, we perform a retraction to bring it back to $\M$.

Our ManPG algorithm for solving \eqref{prob-g} is described in Algorithm \ref{alg:ManPG}. Note that ManPG involves an Armijo line search procedure to determine the stepsize $\alpha$. As we will show in Section \ref{sec:convergence}, this backtracking line search procedure is well defined; i.e., it will terminate after finite number of steps.

\begin{algorithm}[ht]
\caption{Manifold Proximal Gradient Method (ManPG) for Solving \eqref{prob-g}}\label{alg:ManPG}
\begin{algorithmic}[1]
\STATE{Input: initial point $X_0 \in \M$, $\gamma\in(0,1)$, stepsize $t>0$} 
\FOR{$k=0,1,\ldots$}
    \STATE{obtain $V_k$ by solving the subproblem \eqref{tangent-subproblem-R}}
    \STATE{set $\alpha=1$} 
    \WHILE{$F(\Retr_{X_k}(\alpha V_k))> F(X_k)- \displaystyle\frac{\alpha \normfro{V_k}^2}{2t}$}
        \STATE{$\alpha= \gamma\alpha$}
    \ENDWHILE
    \STATE{set $X_{k+1}=\Retr_{X_k}(\alpha V_k)$}
\ENDFOR
\end{algorithmic}
\end{algorithm}

\subsection{Regularized Semi-Smooth Newton Method for Subproblem \eqref{tangent-subproblem-R}}\label{sec:ssn}

The main computational effort of Algorithm \ref{alg:ManPG} lies in solving the convex subproblem \eqref{tangent-subproblem-R}. 
We have conducted extensive numerical experiments and found that the semi-smooth Newton method (SSN) is very suitable for this purpose. 
The notion of semi-smoothness was originally introduced by Mifflin \cite{Mifflin-1977} for real-valued functions and later extended to vector-valued mappings by Qi and Sun \cite{Qi-Sun-1993}. A pioneering work on the SSN method was due to Solodov and Svaiter \cite{Solodov1998}, in which the authors proposed a globally convergent Newton method by exploiting the structure of monotonicity and established a local superlinear convergence rate under the conditions that the generalized Jacobian is semi-smooth and non-singular at the global optimal solution. The convergence rate guarantee was later extended in \cite{Zhou2005} to the setting where the generalized Jacobian is not necessarily non-singular.  Recently, the SSN method has received significant amount of attention due to its success in solving structured convex problems to a high accuracy. In particular, it has been successfully applied to solving SDP \cite{ZhaoSunToh2008,Sun-sdpnal+}, LASSO \cite{Sun-lasso-2018}, nearest correlation matrix estimation \cite{Qi-Sun-NCM-IMA}, clustering \cite{Wang-Sun-Toh-2009}, sparse inverse covariance selection \cite{Sun-group-lasso}, and composite convex minimization \cite{Xiao2016}.

In the following we show how to apply the SSN method to solve the subproblem \eqref{tangent-subproblem-R} with $\M=\St(n,r)$. 
The tangent space to $\M=\St(n,r)$ is given by
\[\T_X \M = \{V\mid V^\top X+X^\top V=0\}.\]
For ease of notation, we define the linear operator $\A_k$ by $\A_k(V) := V^\top X_k+X_k^\top V$ and rewrite the subproblem \eqref{tangent-subproblem-R} as
\be\label{tangent-subproblem}
\ba{rl}
V_k:=\argmin_V & \inp{\nabla f(X_k)}{V} +\frac{1}{2t}\normtwo{V}_F^2+h(X_k+V)\\
\st    & \A_k(V) = 0.
\ea
\ee
By associating a Lagrange multiplier $\Lambda$ to the linear equality constraint, the Lagrangian function of \eqref{tangent-subproblem} can be written as
\[ 
\LCal(V;\Lambda) = \inp{\nabla f(X_k)}{V} +\frac{1}{2t}\normtwo{V}_F^2+h(X_k+V) - \inp{\A_k(V)}{\Lambda},
\] 
and the KKT system of \eqref{tangent-subproblem} is given by
\be\label{tangent-subproblem-kkt}
0 \in \partial_V \LCal(V;\Lambda), \quad \A_k(V) = 0.
\ee
The first condition in \eqref{tangent-subproblem-kkt} implies that $V$ can be computed by
\be\label{compute-D}
V(\Lambda) = \prox_{th}(B(\Lambda))-X_k \quad\mbox{with}\quad B(\Lambda) = X_k-t(\nabla f(X_k) - \A_k^*(\Lambda)),
\ee
where $\A_k^*$ denotes the adjoint operator of $\A_k$.
By substituting \eqref{compute-D} into the second condition in \eqref{tangent-subproblem-kkt}, we see that $\Lambda$ satisfies
\be\label{sub_VI}
E(\Lambda)\equiv \A_k(V(\Lambda))=V(\Lambda)^\top X_k+X_k^\top V(\Lambda)=0.
\ee
We will use the SSN method to solve \eqref{sub_VI}.  {To do so, we need to first show that the operator $E$ is monotone and Lipschitz continuous}. For any $\Lambda_1, \Lambda_2\in S^r$, we have
\be\label{Lipschitz_E}\bad
& \normfro{ E(\Lambda_1)-E(\Lambda_2)}\\
\leq & \normtwo{\A_k}_{op} \normfro{\prox_{ th}(B(\Lambda_1))-\prox_{ th}(B(\Lambda_2))}\\
\leq& \normtwo{\A_k}_{op} \normfro{B(\Lambda_1)-B(\Lambda_2)}\\
\leq& t \normtwo{\A_k}_{op}^2 \normfro{\Lambda_1-\Lambda_2},
\ead\ee
where the second inequality holds since the proximal mapping is  non-expansive.  Moreover,
\begin{align*}
  & \inp{E(\Lambda_1)-E(\Lambda_2)}{\Lambda_1-\Lambda_2}\\
= & \inp{V(\Lambda_1)-V(\Lambda_2)}{\A_k^*(\Lambda_1-\Lambda_2)}\\
= & \frac{1}{t}\inp{\prox_{ th}(B(\Lambda_1))-\prox_{ th}(B(\Lambda_2))}{B(\Lambda_1)-B(\Lambda_2)}\\
\geq & \frac{1}{t}\normfro{\prox_{ th}(B(\Lambda_1))-\prox_{ th}(B(\Lambda_2))}^2\\
\geq &  \frac{1}{t\normtwo{\A_k}_{op}^2} \normfro{E(\Lambda_1)-E(\Lambda_2)}^2  \geq 0,
\end{align*}
where the first inequality holds since the proximal mapping is firmly non-expansive and the second inequality is due to \eqref{Lipschitz_E}. In particular, we see that $E$ is actually $1/(t\normtwo{\A_k}_{op}^2)$-coercive. Therefore, $E$ is indeed monotone and Lipschitz continues, and we can apply the SSN method to find a zero of $E$. In order to apply the SSN method, we need to compute the generalized Jacobian of $E$.\footnote{See Appendix~\ref{sec:semi-smooth} for a brief discussion of the semi-smoothness of operators related to the proximal mapping.} Towards that end, observe that the vectorization of $E(\Lambda)$ can be represented by
\begin{align*}
\vet(E(\Lambda))&=(X_k^\top\otimes I_p) K_{nr}\vet(V(\Lambda))+(I_r\otimes X_k^\top) \vet(V(\Lambda))\\
&=(K_{rr}+I_{r^2})(I_p\otimes X_k^\top)\left[\prox_{th(\cdot)}(\vet(X_k-t\nabla f(X_k))+2t(I_r\otimes X_k)\vet(\Lambda))-\vet(X_k)\right],
\end{align*}
where $K_{nr}$ and $K_{rr}$ denote the commutation matrices. 
%
We define the matrix
\[\G(\vet(\Lambda))=2t (K_{rr}+I_{r^2})(I_r\otimes X_k^\top)\J(y)|_{y=\vet(B(\Lambda))} (I_r\otimes X_k),\]
where $\J(y)$ is the generalized Jacobian of $\prox_{th}(y)$. 
From \cite[Example 2.5]{hiriart1984generalized}, we know that
$\G(\vet(\Lambda))\xi = \partial \vet(E(\vet(\Lambda))\xi, \ \forall \xi\in \R^{r^2}$.
Thus, $\G(\vet(\Lambda))$ can serve as a representation of
$\partial\vet(E(\vet(\Lambda)))$. 
Note that since $\Lambda$ is a symmetric matrix, we only need to focus on the lower triangular part of $\Lambda$. It is known that there exists a unique $r^2\times \frac{1}{2}r(r+1)$ matrix $U_r$, called the duplication matrix \cite[Ch 3.8]{Magnus1988}, such that $U_r \svec(\Lambda)=\vvec(\Lambda)$. The Moore-Penrose inverse of $U_r$ is $U_r^+=(U_r^\top U_r)^{-1}U_r^\top$ and satisfies $U_r^+ \vet(\Lambda)=\svec(\Lambda)$. Note that both $U_r$ and $U_r^+$ have only $r^2$ nonzero elements. As a result, we can represent the generalized Jacobian of $\svec(E(U_r\svec(\Lambda)))$ by
\[ 
\G(\svec(\Lambda))=tU_r^+ \G(\vet(\Lambda))U_r=4tU_r^+ (I_r\otimes X_k^\top)\J(y)|_{y=\vet(B(\Lambda))} (I_r\otimes X_k)U_r,
\] 
where we use the identity $K_{rr}+I_{r^2}=2U_rU_r^+$.
It should be pointed out that $\G(\svec(\Lambda))$ can be singular. Therefore, the vanilla SSN method cannot be applied directly and we need to resort to a regularized SSN method proposed in \cite{Solodov1998} and further studied in \cite{Zhou2005,Xiao2016}. It is known that the global convergence of the regularized SSN method is guaranteed if any element in $\G(\svec(\Lambda))$ is positive semidefinite \cite{Xiao2016}, which is the case here because it can be shown that $\G(\svec(\Lambda))+\G(\svec(\Lambda))^\top$ is positive semidefinite. We find that the adaptive regularized SSN (ASSN) method proposed in \cite{Xiao2016} is very suitable for solving \eqref{sub_VI}. The ASSN method first computes the Newton direction $d_k$ by solving
\be\label{newton-direction}
(\G(\svec(\Lambda_k)) + \eta I)d = -\svec(E(\Lambda_k)),
\ee
where $\eta>0$ is a regularization parameter. {If the matrix size is large, then \eqref{newton-direction} can be solved inexactly by the conjugate gradient method.}
The authors then designed a strategy to decide whether to accept this $d_k$ or not. Roughly speaking, if there is a sufficient decrease from $\|E(\Lambda_{k})\|_2$ to $\|E(\Lambda_{k+1})\|_2$, then we accept $d^k$ and set
\[\svec(\Lambda_{k+1}) = \svec(\Lambda_k) + d_k.\]
Otherwise, a safeguard step is taken. For more details on the ASSN method, we refer the reader to \cite{Xiao2016}.

\section{Global Convergence and Iteration Complexity}\label{sec:convergence}

In this section, we analyze the convergence and iteration complexity of our ManPG algorithm (Algorithm \ref{alg:ManPG}) for solving \eqref{prob-g}. 
 {Our convergence analysis consists of three steps. First, in Lemma \ref{tangent-des} we show that $V_k$ in \eqref{tangent-subproblem-R} is a descent direction for the objective function in \eqref{tangent-subproblem-R}.
Second, in Lemma \ref{sufficient_des} we show that $V_k$ is also a descent direction for the objective function in \eqref{prob-g} after applying a retraction to it; i.e., there is a sufficient decrease from $F(X_k)$ to $F(\Retr_{X_k}({\alpha}V_k))$. This is motivated by a similar result in Boumal\etal\cite{Boumal2016}, which states that the pullback function $\hat{F}(V):=F(\Retr_{X}(V))$ satisfies certain Lipschitz-type property. Therefore, the results here can be seen as an extension of the ones for smooth problems in \cite{Boumal2016} to the nonsmooth problem \eqref{prob-g}. Third, we establish the global convergence of ManPG in Theorem \ref{thm:complexity}. 
}


Now, let us begin our analysis. The first observation is that the objective function in \eqref{tangent-subproblem-R} is strongly convex, which implies that the subproblem \eqref{tangent-subproblem-R} is also strongly convex (recall that a function $g$ is said to be $\alpha$-strongly convex on $\R^{n\times r}$ if
\[ g(Y)\geq g(X) +\inp{\partial g(X)}{Y-X}+\frac{\alpha}{2}\normfro{Y-X}^2, \quad \forall X,Y\in\R^{n\times r}.) \]
The following lemma shows that $V_k$ obtained by solving \eqref{tangent-subproblem-R} is indeed a descent direction in the tangent space to $\M$ at $X_k$:
\begin{lemma}\label{tangent-des}
Given the iterate $X_k$, let
\be \label{eq:g-def}
g(V):=\inp{\nabla f(X_k)}{ V}+\frac{1}{2t}\normtwo{V}_F^2+h(X_k+ V)
\ee
denote the objective function in \eqref{tangent-subproblem-R}. Then, the following holds for any $\alpha \in[0,1]$:
\be\label{d1} g(\alpha V_k)-g(0)\leq \frac{(\alpha-2)\alpha}{2t} \normfro{V_k}^2.\ee
\end{lemma}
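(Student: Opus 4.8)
The plan is to leverage the $1/t$-strong convexity of $g$, which follows from the quadratic term $\frac{1}{2t}\normfro{V}^2$ together with the convexity of the linear term and of $V\mapsto h(X_k+V)$, as already observed just before the lemma. The argument has two ingredients, each valid because $\T_{X_k}\M$ is a linear subspace (so that $0$ and $\alpha V_k$ are feasible for every $\alpha\in[0,1]$), and these are then combined.

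First, I would extract a one-step decrease bound from the optimality of $V_k$. Since $V_k$ minimizes $g$ over the subspace $\T_{X_k}\M$, the first-order optimality condition for convex constrained minimization yields a subgradient $s\in\partial g(V_k)$ lying in the normal cone of $\T_{X_k}\M$, which is exactly its orthogonal complement; thus $s\perp\T_{X_k}\M$. Invoking the subgradient form of strong convexity at $V_k$ with the feasible point $0\in\T_{X_k}\M$ gives $g(0)\geq g(V_k)+\inp{s}{-V_k}+\frac{1}{2t}\normfro{V_k}^2$, and since $V_k\in\T_{X_k}\M$ while $s\perp\T_{X_k}\M$, the cross term $\inp{s}{V_k}$ vanishes, leaving
\[ g(V_k)-g(0)\leq -\frac{1}{2t}\normfro{V_k}^2. \]

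Second, I would apply the chord inequality for strongly convex functions along the segment joining $0$ and $V_k$. Writing $\alpha V_k=(1-\alpha)\cdot 0+\alpha V_k$ for $\alpha\in[0,1]$ and using that $g-\frac{1}{2t}\normfro{\cdot}^2$ is convex, one obtains
\[ g(\alpha V_k)\leq (1-\alpha)g(0)+\alpha g(V_k)-\frac{\alpha(1-\alpha)}{2t}\normfro{V_k}^2. \]
Substituting the bound $g(V_k)-g(0)\leq -\frac{1}{2t}\normfro{V_k}^2$ from the first step collapses the right-hand side to $-\frac{\alpha}{2t}\normfro{V_k}^2-\frac{\alpha(1-\alpha)}{2t}\normfro{V_k}^2=\frac{(\alpha-2)\alpha}{2t}\normfro{V_k}^2$, which is exactly \eqref{d1}.

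The concluding algebra is routine; the only step requiring care is the first one, where I must justify that the constrained minimizer admits a subgradient orthogonal to $\T_{X_k}\M$ so that the cross term drops out. This is precisely where the nonsmoothness of $h$ enters, and it is handled through the normal-cone optimality condition for convex minimization over a subspace rather than through a gradient identity.
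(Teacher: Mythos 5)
Your proof is correct and follows essentially the same route as the paper's: both use the $(1/t)$-strong convexity of $g$ together with the optimality of $V_k$ over the subspace $\T_{X_k}\M$ (your normal-cone subgradient $s\perp\T_{X_k}\M$ is exactly the paper's condition $0\in\Proj_{\T_{X_k}\M}\partial g(V_k)$) to obtain $g(V_k)-g(0)\leq-\frac{1}{2t}\normfro{V_k}^2$, and then interpolate along the segment from $0$ to $V_k$. Your strong-convexity chord inequality is an equivalent repackaging of the paper's step that applies convexity to $h$ and handles the linear and quadratic terms explicitly, since the chord bound is tight on those terms.
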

\begin{proof}
Since $g$ is $(1/t)$-strongly convex, we have
\be\label{e1}
g(\hat{V})\geq g(V) + \inp{\partial g(V)}{\hat{V}-V} + \frac{1}{2t}\normfro{\hat{V}-V}^2, \quad \forall V, \hat{V} \in \R^{n\times r}.
\ee
In particular, if $V,\hat{V}$ are feasible for \eqref{tangent-subproblem-R} (i.e., $V,\hat{V}\in \T_{X_k}\M$), then
\[ \inp{\partial g(V)}{\hat{V}-V} = \inp{\Proj_{\T_{X_k}\M}\partial g(V)}{\hat{V}-V}. \]
From the optimality condition of \eqref{tangent-subproblem-R}, we have $0\in \Proj_{\T_{X_k}\M}\partial g(V_k)$. Letting $V = V_k$ and $\hat{V} = 0$ in \eqref{e1} yields
\[g(0)\geq g(V_k) +  \frac{1}{2t}\normfro{V_k}^2, \]
which implies that
\[ 
h(X_k)\geq\inp{\nabla f(X_k)}{V_k}+\frac{1}{2t}\normfro{V_k}^2+h(X_k+V_k) + \frac{1}{2t}\normfro{V_k}^2.
\] 
Moreover, the convexity of $h$ yields
\[ 
h(X_k+\alpha V_k)= h(\alpha(X_k+V_k)+(1-\alpha)X_k) \leq \alpha h(X_k+V_k) + (1-\alpha)h(X_k).
\] 
Upon combining the above inequalities, we obtain
\[
g(\alpha V_k) -g(0) = \inp{\nabla f(X_k)}{\alpha V_k}+\frac{\normfro{\alpha V_k}^2}{2t} + h(X_k+\alpha V_k) - h(X_k) \leq  \frac{\alpha^2-2\alpha}{2t} \normfro{V_k}^2,
\]
as desired.
\end{proof}

The following lemma shows that $\{F(X_k)\}$ is monotonically decreasing, where $\{X_k\}$ is generated by Algorithm \ref{alg:ManPG}.
\begin{lemma}\label{sufficient_des}
For any $t>0$, there exists a constant $\bar{\alpha}>0$ such that for any $0<\alpha \leq \min\{1,\bar{\alpha}\}$, the line search procedure in Algorithm \ref{alg:ManPG} is well defined, and the sequence $\{X_k\}$ generated by Algorithm \ref{alg:ManPG} satisfies
\[ 
F(X_{k+1}) -F(X_k) \leq -\frac{\alpha}{2t}\normtwo{V_k}_F^2.
\] 
\end{lemma}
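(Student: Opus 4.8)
The plan is to bound $F(\Retr_{X_k}(\alpha V_k))$ from above by splitting $F = f + h$ and treating each piece with the tool tailored to it, then feeding the result into Lemma~\ref{tangent-des}. Writing $Y := \Retr_{X_k}(\alpha V_k)$, the second retraction bound \eqref{Second-bounded} gives $Y = X_k + \alpha V_k + E$ with $\normfro{E} \leq M_2\alpha^2\normfro{V_k}^2$, while the first bound \eqref{first-bounded} gives $\normfro{Y - X_k} \leq M_1\alpha\normfro{V_k}$. For the smooth part I would apply the descent lemma afforded by the $L$-Lipschitz continuity of $\nabla f$, namely $f(Y) \leq f(X_k) + \inp{\nabla f(X_k)}{Y - X_k} + \frac{L}{2}\normfro{Y - X_k}^2$, and expand $\inp{\nabla f(X_k)}{Y - X_k} = \alpha\inp{\nabla f(X_k)}{V_k} + \inp{\nabla f(X_k)}{E}$. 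For the nonsmooth part I would use the $L_h$-Lipschitz continuity of $h$ to get $h(Y) \leq h(X_k + \alpha V_k) + L_h\normfro{E}$.

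The key algebraic step is to extract a clean first-order descent from Lemma~\ref{tangent-des}. Recalling $g(0) = h(X_k)$ and expanding $g(\alpha V_k)$, the inequality \eqref{d1} rearranges — the quadratic-in-$\alpha$ terms cancel, leaving the linear term — to
\[
\alpha\inp{\nabla f(X_k)}{V_k} + h(X_k + \alpha V_k) - h(X_k) \leq -\frac{\alpha}{t}\normfro{V_k}^2.
\]
Substituting the two retraction-based bounds above together with this inequality into $F(Y) = f(Y) + h(Y)$, and using $F(X_k) = f(X_k) + h(X_k)$, I would arrive at
\[
F(Y) - F(X_k) \leq -\frac{\alpha}{t}\normfro{V_k}^2 + \inp{\nabla f(X_k)}{E} + \frac{L}{2}\normfro{Y - X_k}^2 + L_h\normfro{E}.
\]

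It then remains to show the three remainder terms are all of order $\alpha^2\normfro{V_k}^2$ and can be absorbed into half the leading descent. Using $\normfro{E} \leq M_2\alpha^2\normfro{V_k}^2$, Cauchy--Schwarz on $\inp{\nabla f(X_k)}{E}$, and $\normfro{Y - X_k}^2 \leq M_1^2\alpha^2\normfro{V_k}^2$, each term is bounded by a constant times $\alpha^2\normfro{V_k}^2$. The one point requiring care is the factor $\normfro{\nabla f(X_k)}$ multiplying $\normfro{E}$: since $\M$ is compact and $\nabla f$ is continuous, $\normfro{\nabla f(X)}$ is bounded by some $G$ uniformly over $X \in \M$, so the bound is independent of $k$. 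Collecting constants into $C := GM_2 + \tfrac{LM_1^2}{2} + L_hM_2$ yields
\[
F(Y) - F(X_k) \leq \left(-\frac{1}{t} + C\alpha\right)\alpha\normfro{V_k}^2,
\]
and choosing $\bar\alpha := \tfrac{1}{2tC}$ forces the coefficient below $-\tfrac{1}{2t}$ whenever $\alpha \leq \bar\alpha$, giving the claimed sufficient decrease. This simultaneously shows the Armijo condition holds for all sufficiently small $\alpha$, so the backtracking loop (which scales $\alpha$ by $\gamma \in (0,1)$) terminates after finitely many steps. I expect the main obstacle to be precisely this uniform control of the $O(\alpha^2)$ remainders — in particular, justifying via compactness of $\M$ that $\bar\alpha$ can be chosen independently of the iterate — rather than any single inequality.
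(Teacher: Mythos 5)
Your proposal is correct and follows essentially the same route as the paper's proof: both use the descent lemma for $f$, the two retraction bounds with compactness of $\M$ to control the $O(\alpha^2)$ remainders (including the uniform bound $G$ on $\normfro{\nabla f}$), Lipschitz continuity of $h$, and Lemma~\ref{tangent-des}, arriving at the same constant $C = GM_2 + \tfrac{LM_1^2}{2} + L_hM_2$ and the same $\bar\alpha = \tfrac{1}{2tC}$. The only (cosmetic) difference is that you cancel the quadratic terms in \eqref{d1} up front to isolate the linear descent $-\tfrac{\alpha}{t}\normfro{V_k}^2$, whereas the paper carries $g(\alpha V_k)-g(0)$ along and applies \eqref{d1} at the final step.
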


\begin{proof}
Let $X_k^+= X_k+\alpha V_k$. By~\eqref{first-bounded},~\eqref{Second-bounded} and the $L$-Lipschitz continuity of $\nabla f$, for any $\alpha>0$, we have
\be\label{ineq1-0}
\bad
f(\Retr_{X_k}(\alpha V_k)) - f(X_k) &\leq \inp{\nabla f(X_k)}{\Retr_{X_k}(\alpha V_k)-X_k}+\frac{L}{2}\normfro{\Retr_{X_k}(\alpha V_k)-X_k}^2\\
&= \inp{\nabla f(X_k)}{\Retr_{X_k}(\alpha V_k)-X_k^+ +X_k^+ -X_k}+\frac{L}{2}\normfro{\Retr_{X_k}(\alpha V_k)-X_k}^2\\
&\leq M_2\normfro{\nabla f(X_k)}\normfro{\alpha V_k}^2 + \alpha\inp{\nabla f(X_k)}{V_k} + \frac{LM_1^2}{2}\normfro{\alpha V_k}^2.
\ead
\ee
Since $\nabla f$ is continuous on the compact manifold $\M$, there exists a constant $G>0$ such that $\normfro{\nabla f(X)}\leq G$ for all $X\in\M$. It then follows from \eqref{ineq1-0} that
\be\label{smooth_ine}
f(\Retr_{X_k}(\alpha V_k)) - f(X_k) \leq  \alpha\inp{\nabla f(X_k)}{V_k}+c_0\alpha^2\normfro{V_k}^2 ,
\ee
where $c_0=M_2G +LM_1^2/2$. This implies that
\[
\bad
F(\Retr_{X_k}(\alpha V_k)) - F(X_k) 
                 & \overset{\eqref{smooth_ine}}{\leq} \alpha\inp{\nabla f(X_k)}{V_k} +c_0 \alpha^2 \normfro{V_k}^2+ h(\Retr_{X_k}(\alpha V_k))-h(X_k^+)+ h(X_k^+)- h(X_k)\\
                 &\,\, \leq \,\,\,\alpha\inp{\nabla f(X_k)}{V_k} +c_0 \alpha^2 \normtwo{V_k}_F^2+ L_h\normfro{\Retr_{X_k}(\alpha V_k)-X_k^+}+  h(X_k^+)- h(X_k)\\
                 & \overset{\eqref{Second-bounded}}{\leq}(c_0  + L_hM_2) \normtwo{\alpha V_k}_F^2 + g(\alpha V_k) - \frac{1}{2t}\normfro{\alpha V_k}^2 - g(0)\\
                 & \overset{\eqref{d1}}{\leq} \left( c_0+L_hM_2 - \frac{1}{\alpha t} \right) \normfro{\alpha V_k}^2,
\ead
\]
where $g$ is defined in~\eqref{eq:g-def} and the second inequality follows from the Lipschitz continuity of $h$. Upon setting $\bar{\alpha} = 1/{(2(c_0+L_h M_2)t)}$, we conclude that for any $0< \alpha \leq \min\{\bar{\alpha},1\}$,
\[F(\Retr_{X_k}(\alpha V_k)) -F(X_k) \leq -\frac{1}{2\alpha t}\normfro{\alpha V_k}^2 = -\frac{\alpha}{2t}\normfro{V_k}^2.\]
This completes the proof.
\end{proof}

The following lemma shows that if one cannot make any progress by solving \eqref{tangent-subproblem-R} (i.e., $V_k=0$), then a stationary point is found.
\begin{lemma}\label{first_order_opt}
If $V_k=0$, then $X_k$ is a stationary point of problem \eqref{prob-g}.
\end{lemma}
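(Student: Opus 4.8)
The plan is to read off the stationarity condition directly from the optimality condition of the strongly convex subproblem \eqref{tangent-subproblem-R}, evaluated at $V_k=0$. The key observations are the subdifferential sum rule for $g$ and the linearity of the orthogonal projection onto the tangent space (which is a linear subspace).

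First I would recall the optimality characterization already invoked in the proof of Lemma \ref{tangent-des}: since $g$ is convex and the feasible set $\T_{X_k}\M$ is a linear subspace, the minimizer $V_k$ satisfies $0\in\Proj_{\T_{X_k}\M}\partial g(V_k)$. Next I would compute $\partial g$ explicitly. Writing $g(V)=\inp{\nabla f(X_k)}{V}+\frac{1}{2t}\normfro{V}^2+h(X_k+V)$, the first term is linear with gradient $\nabla f(X_k)$, the second is smooth with gradient $\frac{1}{t}V$, and the third contributes $\partial h(X_k+V)$ by the translation rule for subdifferentials, so $\partial g(V)=\nabla f(X_k)+\frac{1}{t}V+\partial h(X_k+V)$. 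Setting $V_k=0$ gives $\partial g(0)=\nabla f(X_k)+\partial h(X_k)$.

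The final step is to substitute this into the optimality condition and split the projection. Using that $\Proj_{\T_{X_k}\M}$ is a linear operator, it distributes over the Minkowski sum of the singleton $\{\nabla f(X_k)\}$ and the set $\partial h(X_k)$, so
\[
0\in\Proj_{\T_{X_k}\M}\big(\nabla f(X_k)+\partial h(X_k)\big)=\Proj_{\T_{X_k}\M}\nabla f(X_k)+\Proj_{\T_{X_k}\M}\partial h(X_k).
\]
Recalling $\grad f(X_k)=\Proj_{\T_{X_k}\M}\nabla f(X_k)$ from the choice of Riemannian metric, this reads $0\in\grad f(X_k)+\Proj_{\T_{X_k}\M}(\partial h(X_k))$, which is exactly the first-order necessary condition in Definition \ref{stationary_point}. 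Hence $X_k$ is a stationary point.

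This argument is essentially a one-line unfolding of the optimality condition, so there is no genuine obstacle. The only point requiring a little care is the justification that the optimality condition for minimization over the subspace $\T_{X_k}\M$ takes the form $0\in\Proj_{\T_{X_k}\M}\partial g(V_k)$ rather than $0\in\partial g(V_k)$; this is the standard fact that a subgradient normal to the subspace certifies optimality, and it has already been used earlier in the excerpt, so I would simply cite it. The subdifferential sum rule applies here because $\inp{\nabla f(X_k)}{\cdot}$ and $\frac{1}{2t}\normfro{\cdot}^2$ are everywhere differentiable, so no constraint-qualification issue arises.
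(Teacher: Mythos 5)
Your proof is correct and takes essentially the same route as the paper: both read off the first-order optimality condition of the convex subproblem \eqref{tangent-subproblem-R} at $V_k=0$ and identify it with the stationarity condition $0\in\grad f(X_k)+\Proj_{\T_{X_k}\M}(\partial h(X_k))$ of Definition \ref{stationary_point}. The only cosmetic difference is that the paper obtains the subproblem's optimality condition by citing Theorem 4.1 of Yang et al., whereas you derive it yourself from the subdifferential sum rule and the linearity of $\Proj_{\T_{X_k}\M}$, which yields the identical condition.
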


\begin{proof}
By Theorem 4.1 in \cite{Yang-manifold-optimality-2014}, the optimality conditions of the subproblem \eqref{tangent-subproblem1-R} are given by
\[ 0\in  \frac{1}{t} V_k + \grad\,f(X_k)+\Proj_{\T_{X_k}\M} \partial h(X_k+V_k), \quad \ V_k\in \T_{X_k} \M. \]
If $V_k=0$, then $0\in\grad\,f(X_k)+\Proj_{\T_{X_k}\M} \partial h(X_k)$, which is exactly the first-order necessary condition of problem \eqref{prob-g} since $X_k\in \M$.
\end{proof}

From Lemma \ref{first_order_opt}, we know that $V_k=0$ implies the stationarity of $X_k$ with respect to \eqref{prob-g}. This motivates the following definition of an $\epsilon$-stationary point of \eqref{prob-g}:

\begin{definition}\label{def-epsilon-stationary}
We say that $X_k \in \M$ is an $\epsilon$-stationary point of \eqref{prob-g} if the solution $V_k$ to \eqref{tangent-subproblem} with $t=1/L$ satisfies $\normfro{V_k}\leq \epsilon/L$.
\end{definition}

We use $\normfro{V_k}\leq \epsilon/L$ as the stopping criterion of Algorithm \ref{alg:ManPG} with $t=1/L$. From Lemma \ref{sufficient_des}, we obtain the following result which is similar to the one in \cite[Theorem 2]{Boumal2016} for manifold optimization with smooth objectives.
\begin{theorem}\label{thm:complexity}
Every limit point of the sequence $\{X_k\}$ generated by Algorithm \ref{alg:ManPG} is a stationary  point of problem \eqref{prob-g}. Moreover, Algorithm \ref{alg:ManPG} with $t = 1/L$ will return an $\epsilon$-stationary point of~\eqref{prob-g} in at most $\left\lceil 2L(F(X_0)-F^*)/(\gamma\bar{\alpha}\epsilon^2) \right\rceil$ iterations, where $\bar{\alpha}$ is defined in Lemma \ref{sufficient_des} and $F^*$ is the optimal value of~\eqref{prob-g}.
\end{theorem}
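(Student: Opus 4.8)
The plan is to derive both conclusions from the sufficient-decrease estimate in Lemma~\ref{sufficient_des}, combined with a limiting argument applied to the optimality condition of the subproblem \eqref{tangent-subproblem-R}. First I would pin down the stepsize produced by the Armijo line search. Since the search starts at $\alpha=1$ and scales by $\gamma$, and since Lemma~\ref{sufficient_des} guarantees that every $\alpha\le\min\{1,\bar{\alpha}\}$ passes the backtracking test, the accepted stepsize $\alpha_k$ must satisfy $\alpha_k\ge\gamma\min\{1,\bar{\alpha}\}$: the last rejected trial exceeded $\min\{1,\bar{\alpha}\}$, so the accepted one exceeds $\gamma$ times that quantity. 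Plugging this lower bound into Lemma~\ref{sufficient_des} gives a guaranteed decrease of at least $\frac{\gamma\min\{1,\bar{\alpha}\}}{2t}\normfro{V_k}^2$ at each iteration.

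Next I would telescope. Summing $F(X_{k+1})-F(X_k)\le-\frac{\gamma\min\{1,\bar{\alpha}\}}{2t}\normfro{V_k}^2$ over $k=0,\dots,K-1$ and using $F(X_K)\ge F^*$ yields $\sum_{k=0}^{K-1}\normfro{V_k}^2\le \frac{2t(F(X_0)-F^*)}{\gamma\min\{1,\bar{\alpha}\}}$. Setting $t=1/L$ and bounding the smallest term by the average, $\min_{0\le k\le K-1}\normfro{V_k}^2\le\frac1K\sum_{k=0}^{K-1}\normfro{V_k}^2$, produces exactly the claimed iteration count for reaching some $k$ with $\normfro{V_k}\le\epsilon/L$, i.e. an $\epsilon$-stationary point in the sense of Definition~\ref{def-epsilon-stationary}. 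Letting $K\to\infty$ in the same bound shows the partial sums are uniformly bounded, hence $\normfro{V_k}\to0$.

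Finally, for the limit-point claim I would pass the subproblem optimality condition to the limit. As derived in Lemma~\ref{first_order_opt}, at each $k$ there exists $G_k\in\partial h(X_k+V_k)$ with $\frac1t V_k+\grad f(X_k)+\Proj_{\T_{X_k}\M}G_k=0$. Along a subsequence $X_{k_j}\to X_*$ we have $V_{k_j}\to0$, so $X_{k_j}+V_{k_j}\to X_*$; since $h$ is $L_h$-Lipschitz the multipliers satisfy $\normfro{G_{k_j}}\le L_h$, so after passing to a further subsequence they converge to some $G_*$. The hard part is this limiting step, where the set-valued subdifferential and the varying projection operator must be handled together. I would invoke closedness of the graph of the convex subdifferential $\partial h$ to conclude $G_*\in\partial h(X_*)$, together with the continuity of the map $X\mapsto\Proj_{\T_X\M}$ (explicit in the projection formula for $\St(n,r)$ recorded in the Notation section) and the continuity of $\grad f$ to pass the identity to the limit. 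This gives $0=\grad f(X_*)+\Proj_{\T_{X_*}\M}G_*\in\grad f(X_*)+\Proj_{\T_{X_*}\M}\partial h(X_*)$, so $X_*$ is stationary by Definition~\ref{stationary_point}, completing the argument.
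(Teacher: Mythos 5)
Your proposal is correct, and its skeleton is the same as the paper's: a lower bound on the accepted Armijo stepsize, the sufficient decrease from Lemma \ref{sufficient_des}, and a telescoping sum against $F(X_0)-F^*$ to get both $\normfro{V_k}\to 0$ and the iteration count. The paper phrases the complexity part contrapositively (assume $\normfro{V_k}>\epsilon/L$ for $k=0,\dots,K-1$ and bound $K$), whereas you telescope and bound the minimum by the average; these are interchangeable. The genuine difference is in the limit-point claim. The paper disposes of it in one line --- ``$\normfro{V_k}^2\to 0$, combining with Lemma \ref{first_order_opt}'' --- but Lemma \ref{first_order_opt} as stated only covers the case $V_k=0$ exactly, so strictly speaking an argument is being elided there. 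You supply precisely that argument: extract $G_k\in\partial h(X_k+V_k)$ from the subproblem optimality condition, use $L_h$-Lipschitzness of $h$ to bound the $G_k$ and pass to a convergent subsequence, invoke closedness of the graph of the convex subdifferential to get $G_*\in\partial h(X_*)$, and use joint continuity of $(X,Y)\mapsto\Proj_{\T_X\M}Y$ (explicit from the projection formula on $\St(n,r)$) together with continuity of $\grad f$ to pass the stationarity identity to the limit. This makes your write-up self-contained where the paper relies on the reader to fill in the same steps. Two further small points in your favor: your stepsize bound $\alpha_k\geq\gamma\min\{1,\bar{\alpha}\}$ is the correct form (the paper writes $\gamma\bar{\alpha}$, which silently assumes $\bar{\alpha}\leq 1$, the typical case), and your observation that summability of $\normfro{V_k}^2$ yields $V_k\to 0$ matches the paper's use of boundedness of $F$ from below on the compact manifold $\M$.
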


\begin{proof}
Since $F$ is bounded below on $\M$, by Lemma \ref{sufficient_des}, we have
$$\lim_{k\rightarrow \infty} \normtwo{V_k}_F^2 =0.$$
Combining with Lemma \ref{first_order_opt}, it follows that every limit point of $\{X_k\}$ is a stationary point of \eqref{prob-g}. Moreover, since
$\M$ is compact, the sequence $\{X_k\}$ has at least one limit point. Furthermore, suppose that Algorithm \ref{alg:ManPG} with $t=1/L$ does not terminate after $K$ iterations; i.e., $\normfro{V_k} > \epsilon/L$ for all $k=0,1,\ldots,K-1$. Let $\alpha_k$ be the stepsize in the $k$-th iteration; i.e., $X_{k+1}=\Retr_{X_k}(\alpha_k V_k)$. From Lemma \ref{sufficient_des}, we know that $\alpha_k\geq\gamma \bar{\alpha}$. Thus, we have
\[ F(X_0)-F^*\geq F(X_0)-F(X_K) \geq \frac{t}{2}\sum_{k=0}^{K-1}\alpha_k\normfro{V_k/t}^2 > \frac{t\epsilon^2}{2} \sum_{k=0}^{K-1}\alpha_k \geq \frac{tK\epsilon^2}{2} \gamma \bar{\alpha}. \]
Therefore, Algorithm \ref{alg:ManPG} finds an $\epsilon$-stationary point in at most $\left\lceil 2L(F(X_0)-F^*)/(\gamma\bar{\alpha}\epsilon^2)\right\rceil$ iterations.
\end{proof}

{
\begin{remark}
When the objective function $F$ in \eqref{prob-g} is smooth (i.e., the nonsmooth function $h$ vanishes), the iteration complexity in Theorem \ref{thm:complexity} matches the result given by Boumal\etal in \cite{Boumal2016}. Zhang and Sra \cite{Sra-1st-order-geodesically-convex-2016} analyzed the iteration complexity of some first-order methods, but they assumed that the objectives are geodesically convex. Such an assumption is rather restrictive, as it is known that every smooth function that is geodesically convex on a compact Riemannian manifold is constant \cite{bishop1969manifolds}. Bento\etal\cite{Bento-iteration-complexity-2017} also established some iteration complexity results for gradient, subgradient, and proximal point methods. However, their results for gradient and subgradient methods require the objective function to be convex and the manifold to be of nonnegative curvature, while those for proximal point methods only apply to convex objective functions over the Hadamard manifold.
\end{remark}
}

\section{Numerical Experiments}\label{sec:numerical}

In this section, we apply our ManPG algorithm (Algorithm \ref{alg:ManPG}) to solve the sparse PCA \eqref{spca} and compressed modes (CM) \eqref{CM} problems. We compare ManPG with two existing methods SOC \cite{Lai-Osher-soc-2014} and PAMAL \cite{Chen2016}.
For both problems, we set the parameter $\gamma=0.5$ and use the polar decomposition as the retraction mapping in ManPG.  {The latter is because it is found that the MATLAB implementation of QR factorization is slower than the polar decomposition; see \cite{absil2015low}.}
Moreover, we implement a more practical version of ManPG, named ManPG-Ada and described in Algorithm \ref{alg:ManPG-adap}, that incorporates a few tricks including adaptively updating the stepsize $t$. We set the parameters $\gamma = 0.5$ and $\tau = 1.01$ in ManPG-Ada. All the codes used in this section were written in MATLAB and run on a standard PC with 3.70 GHz I7 Intel microprocessor and 16GB of memory.

\subsection{A More Practical ManPG: ManPG-Ada}
 {In this subsection, we introduce some tricks used to further improve the performance of ManPG in practice. First, a warm-start strategy is adopted for SSN; i.e., the initial point $\Lambda_0$ in SSN is set as the solution of the previous subproblem. {{For the ASSN algorithm, we always take the semi-smooth Newton step as suggested by \cite{Xiao2016}.}} 
Second, 
we adaptively update $t$ in ManPG. When $t$ is large, we may need smaller total number of iterations to reach an $\epsilon$-stationary point. However, it increases the number of line search steps and the SSN steps. For sparse PCA and CM problems, we found that setting $t=1/L$ leads to fewer number of line search steps. We can then increase $t$ slightly if no line search step was needed in the previous iteration. This new version of ManPG, named ManPG-Ada, is described in Algorithm \ref{alg:ManPG-adap}. 
We also applied ManPG-Ada to solve sparse PCA and CM problems and compared its performance with ManPG, SOC, and PAMAL.
	 \begin{algorithm}[ht]
		\caption{ManPG-Ada for Solving \eqref{prob-g}}\label{alg:ManPG-adap}
		\begin{algorithmic}[1]
			\STATE{Input: initial point $X_0 \in \M$, $\gamma\in(0,1)$, $\tau>1$ and Lipschitz constant $L$}
			\STATE{set $t=1/L$}
			\FOR{$k=0,1,\ldots$}
			\STATE{obtain $V_k$ by solving the subproblem \eqref{tangent-subproblem-R}}
			\STATE{set $\alpha=1$ and $\text{linesearchflag} = 0$} 
			\WHILE{$F(\Retr_{X_k}(\alpha V_k))> F(X_k)-\displaystyle\frac{\alpha  \normfro{V_k}^2}{2t}$}
			\STATE{$\alpha= \gamma\alpha$}
			\STATE{$\text{linesearchflag} = 1$}
			\ENDWHILE
			\STATE{set $X_{k+1}=\Retr_{X_k}(\alpha V_k)$}
			\IF{$\text{linesearchflag} = 1$ }
			\STATE{ $t = \tau t$}
			\ELSE
			\STATE{ $t = \max\{1/L,t/\tau\}$}
			\ENDIF
			\ENDFOR
		\end{algorithmic}
\end{algorithm}}

\subsection{Numerical Results on CM}\label{sec:numerical:CMS}

For the CM problem \eqref{CM}, both SOC~\cite{Lai-Osher-soc-2014} and PAMAL~\cite{Chen2016} rewrite the problem as
\be\label{CM-re}
\ba{ll}
\min_{X,Q,P\in\br^{n\times r}} & \Tr(P^\top H P) + \mu\|Q\|_1 \\
\st  & Q = P, X = P, X^\top X = I_r.
\ea
\ee
SOC employs a three-block ADMM to solve \eqref{CM-re}, which updates the iterates as follows:
\be\label{soc-CM-re}
\ba{ll}
P_{k+1} & := \argmin_P \ \Tr(P^\top H P) + \frac{\beta}{2}\|P-Q_k + \Lambda_k\|_F^2 + \frac{\beta}{2}\|P-X_k+\Gamma_k\|_F^2, \\
Q_{k+1} & := \argmin_Q \ \mu\|Q\|_1 + \frac{\beta}{2}\|P_{k+1}-Q + \Lambda_k\|_F^2, \\
X_{k+1} & := \argmin_X \ \frac{\beta}{2}\|P_{k+1}-X+\Gamma_k\|_F^2, \ \st, X^\top X = I_r, \\
\Lambda_{k+1} & := \Lambda_k + P_{k+1}-Q_{k+1}, \\
\Gamma_{k+1} & := \Gamma_k + P_{k+1}-X_{k+1}.
\ea
\ee

PAMAL uses an inexact augmented Lagrangian method to solve \eqref{CM-re} with the augmented Lagrangian function being minimized by the proximal alternating minimization algorithm proposed in \cite{Attouch-2016-MP}.  {Both SOC and PAMAL need to solve a linear system $(H + \beta I)X= B$, where $B$ is a given matrix.}

In our numerical experiments, we tested the same problems as in \cite{Ozolins2013} and \cite{Chen2016}. In particular, we consider the time-independent Schr\"odinger equation
\[\hat{H}\phi(x)=\lambda\phi(x),x\in\Omega,\]
where $\hat{H} = -\half\Delta$ denotes the Hamiltonian, $\Delta$ denotes the Laplacian operator, and $H$ is a symmetric matrix formed by discretizing the Hamiltonian $\hat{H}$. We focus on the 1D free-electron (FE) model. The FE model describes the behavior of valence electron in a crystal structure of a metallic solid and has $\hat{H}=-\half\partial_x^2$. We consider the system on a domain $\Omega = [0,50]$ with periodic boundary condition and discretize the domain with $n$ equally spaced nodes. The stepsize $t$ in Algorithm \ref{alg:ManPG} was set to $1/(2\lambda_{\max}(\hat{H}))$, where $\lambda_{\max}(\hat{H})$ denotes the largest eigenvalue of $\hat{H}$ and is given by $2n^2/50^2$ in this case.

Since the matrix $H$ is circulant, we used FFT to solve the linear systems in SOC and PAMAL, which is more efficient than directly inverting the matrices. We terminated ManPG when $\normfro{V_k/t}^2\leq \epsilon:= 10^{-8}nr$ or the maximum iteration number $30000$ was reached. For the inner iteration of ManPG (i.e., using SSN to solve \eqref{tangent-subproblem-R}), we terminated it when $\normfro{E(\Lambda)}^2\leq \max\{10^{-13}, \min\{10^{-11},10^{-3} t^2 \epsilon\}\}$ or the maximum iteration number $100$ was reached. In all the tests of the CM problem, we ran ManPG first and denote $F_M$ as the returned objective value. We then ran SOC and PAMAL and terminated them when $F(X_k)\leq F_M + 10^{-7}$ and
\be\label{stop-soc-pamal}
\frac{\|Q_k-P_k\|_F}{\max\{1,\|Q_k\|_F,\|P_k\|_F\}} + \frac{\|X_k-P_k\|_F}{\max\{1,\|X_k\|_F,\|P_k\|_F\}} \leq 10^{-4}.
\ee
Note that \eqref{stop-soc-pamal} measures the constraint violation of the reformulation \eqref{CM-re}. If \eqref{stop-soc-pamal} was not satisfied in $30000$ iterations, then we terminated SOC and PAMAL.  {We also ran ManPG-Ada (Algorithm \ref{alg:ManPG-adap}) and terminated it if $F(X_k)\leq F_M + 10^{-7}$. In our experiments, we found that SOC and PAMAL are very sensitive to the choice of parameters. The default setting of the parameters of SOC and PAMAL suggested in \cite{Ozolins2013} and \cite{Chen2016} usually cannot achieve our desired accuracy. Unfortunately, there is no systematic study on how to tune these parameters. We spent a significant amount of effort on tuning these parameters, and the ones we used are given as follows. For SOC \eqref{soc-CM-re}, we set the penalty parameter $\beta = nr\mu/25 + 1$. For PAMAL, we found that the setting of the parameters given on page B587 of \cite{Chen2016} does not work well for the problems we tested. Instead, we found that the following settings of these parameters worked best and they are thus adopted in our tests: $\tau=0.99$, $\gamma=1.001$, $\rho^{1}=2\left|\lambda_{\min }(H)\right|+r / 10 + 2,  \overline{\Lambda}_{p, \min }=-100, \overline{\Lambda}_{p, \max }=100, \Lambda_{p}^{1}=0_{nr},p=1,2,$ and $\epsilon^{k}=(0.995)^{k}$, $k \in \mathbb{N}$. For the meanings of these parameters, we refer the reader to page B587 of \cite{Chen2016}.
We used the same parameters of PAM in PAMAL as recommended by \cite{Chen2016}. For different settings of $(n,r,\mu)$, we ran the four algorithms with 50 instances whose initial points were obtained by projecting randomly generated points onto $\St(n,r)$. Since problem \eqref{CM} is nonconvex, it is possible that ManPG, ManPG-Ada, SOC and PAMAL return different solutions from random initializations. To increase the chance that all four solvers found the same solution, we ran the Riemannian subgradient method for 500 iterations and used the resulting iterate as the initial point. The Riemannian subgradient method is described as follows:
\be\label{alg:rieman_subgrad}
\bad
\hat{\partial}F(X_k) & := \Proj_{\T_{X_k}\St(n,r)}(2HX_k + \mu\text{sign}(X_k)), \\
X_{k+1} & := \Retr_{X_k} \left( -\frac{1}{k^{3/4}} \hat{\partial}F(X_k) \right),
\ead
\ee
where $\text{sign}(\cdot)$ denotes the element-wise sign function. Moreover, we tried to run the Riemannian subgradient method \eqref{alg:rieman_subgrad} until it solved the CM problem. However, this method is extremely slow and we only report one case in Figure \ref{figure:CMS_dim_sub}. We report the averaged CPU time, iteration number, and sparsity in Figures \ref{figure:CMS_dim_sub} to \ref{figure:CMS_sp_mu}, where sparsity is the percentage of zeros and when computing sparsity, $X$ is truncated by zeroing out its entries whose magnitude is smaller than $10^{-5}$. For SOC and PAMAL, we only took into account the solutions that were close to the one generated by ManPG. Here the closeness of the solutions is measured by the distance between their column spaces. More specifically, let $X_M$, $X_S$, and $X_P$ denote the solutions generated by ManPG, SOC, and PAMAL, respectively. Then, their distances are computed by $\text{dist}(X_M,X_S)=\normfro{X_M X_M^\top-X_S X_S^\top}$ and $\text{dist}(X_M,X_P)=\normfro{X_M X_M^\top-X_P X_P^\top}$. We only counted the results if $\text{dist}^2(X_M,X_S)\leq 0.1$ and $\text{dist}^2(X_M,X_P)\leq 0.1$. In Figure \ref{figure:CMS_dim_sub}, we report the results of Riemannian subgradient method with respect to different $n$'s. We terminated the Riemannian subgradient method \eqref{alg:rieman_subgrad} if $F(X_k)<F_M + 10^{-3}$. We see that this accuracy tolerance $10^{-3}$ is too large to yield a good solution with reasonable sparsity level, yet it is already very time consuming. As a result, we do not report more results on the Riemannian subgradient method. In Figures \ref{figure:CMS_dim}, \ref{figure:CMS_rank}, and \ref{figure:CMS_sp_mu}, we see that the solutions returned by ManPG and ManPG-Ada have better sparsity than SOC and PAMAL. We also see that ManPG-Ada outperforms ManPG in terms of CPU time and iteration number. In Figure \ref{figure:CMS_dim}, the iteration number of ManPG increases with the dimension $n$, because the Lipschitz constant $L = 2\lambda_{\max}(H) = {4n^2}/{50^2}$ increases quadratically, which is consistent with our complexity result. In Figure \ref{figure:CMS_rank}, we see that the CPU times of ManPG and ManPG-Ada are comparable to SOC and PAMAL when $r$ is small, but are slightly more when $r$ gets large. In Figure \ref{figure:CMS_sp_mu}, we see that the performance of the algorithms is also affected by $\mu$. In terms of CPU time, ManPG and ManPG-Ada are comparable to SOC and PAMAL when $\mu$ gets large. 
We also report the total number of line search steps and the averaged iteration number of SSN in ManPG and ManPG-Ada in Table \ref{tab:CMs_ave_line_SSN}. We see that ManPG-Ada needs more line search steps and SSN iterations, but as we show in Figures \ref{figure:CMS_dim}, \ref{figure:CMS_rank}, and \ref{figure:CMS_sp_mu}, ManPG-Ada is faster than ManPG in terms of CPU time. This is mainly because the computational costs of retraction and SSN steps in this problem are both nearly the same as computing the gradient. In the last two columns of Table \ref{tab:CMs_ave_line_SSN}, `\#s$\mid$d' denotes the number of instances for which SOC and PAMAL generate same, different solutions as ManPG with the closeness measurement discussed above; `\# f' denotes the number of instances that SOC and PAMAL fail to converge. We see that for the tested instances of the CM problem, all algorithms converged thanks to the parameters that we chose, although sometimes the solutions generated by PAMAL are different from those generated by ManPG and SOC.
}
\begin{figure}[H]
	\begin{center}
		\minipage{0.33\textwidth}
		\subfigure[CPU ]{
			\centerline{\includegraphics[width=0.9\linewidth]{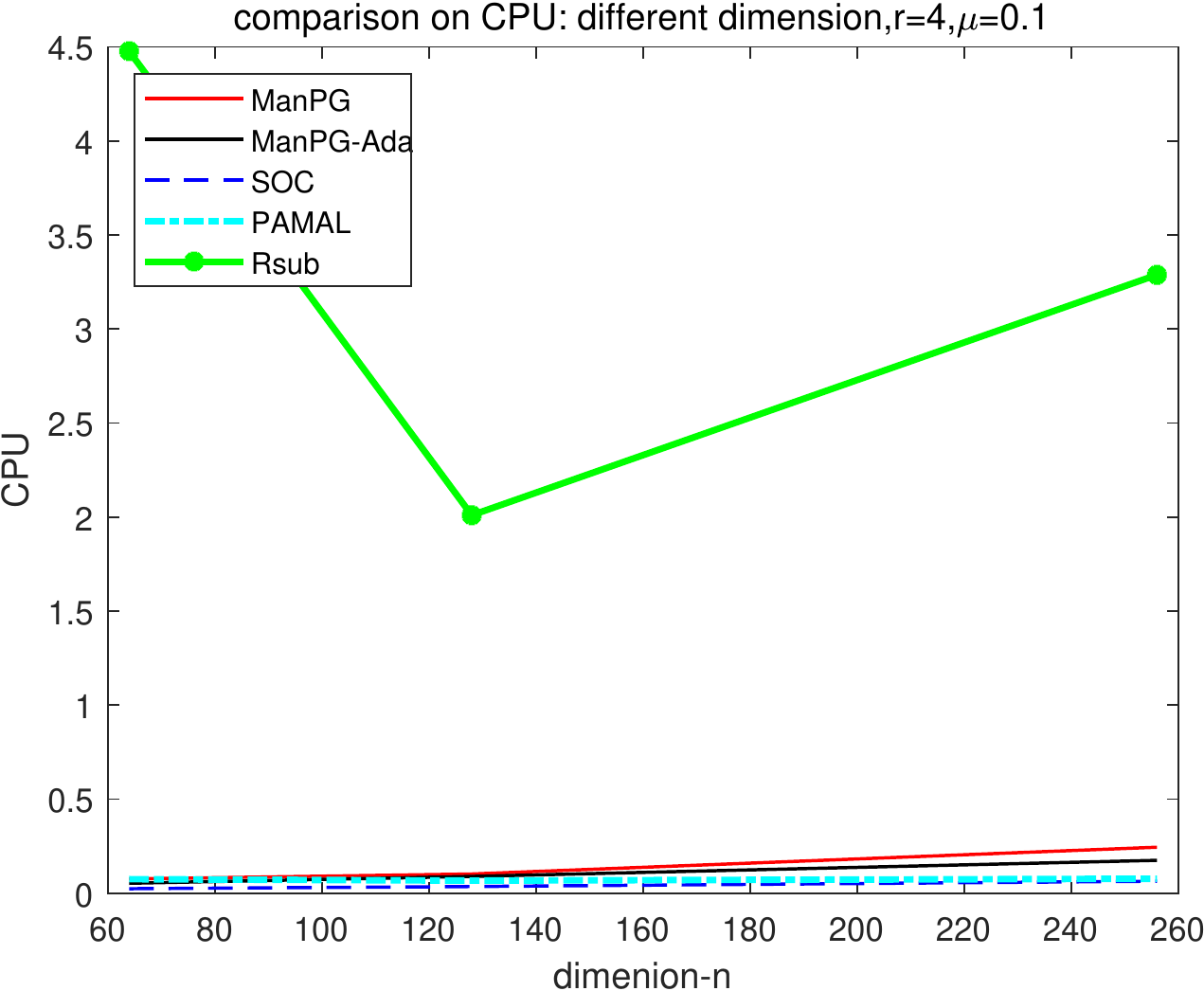}}}
		\endminipage\hfill
		\minipage{0.33\textwidth}
		\subfigure[ Iteration ]{\includegraphics[width=0.9\linewidth]{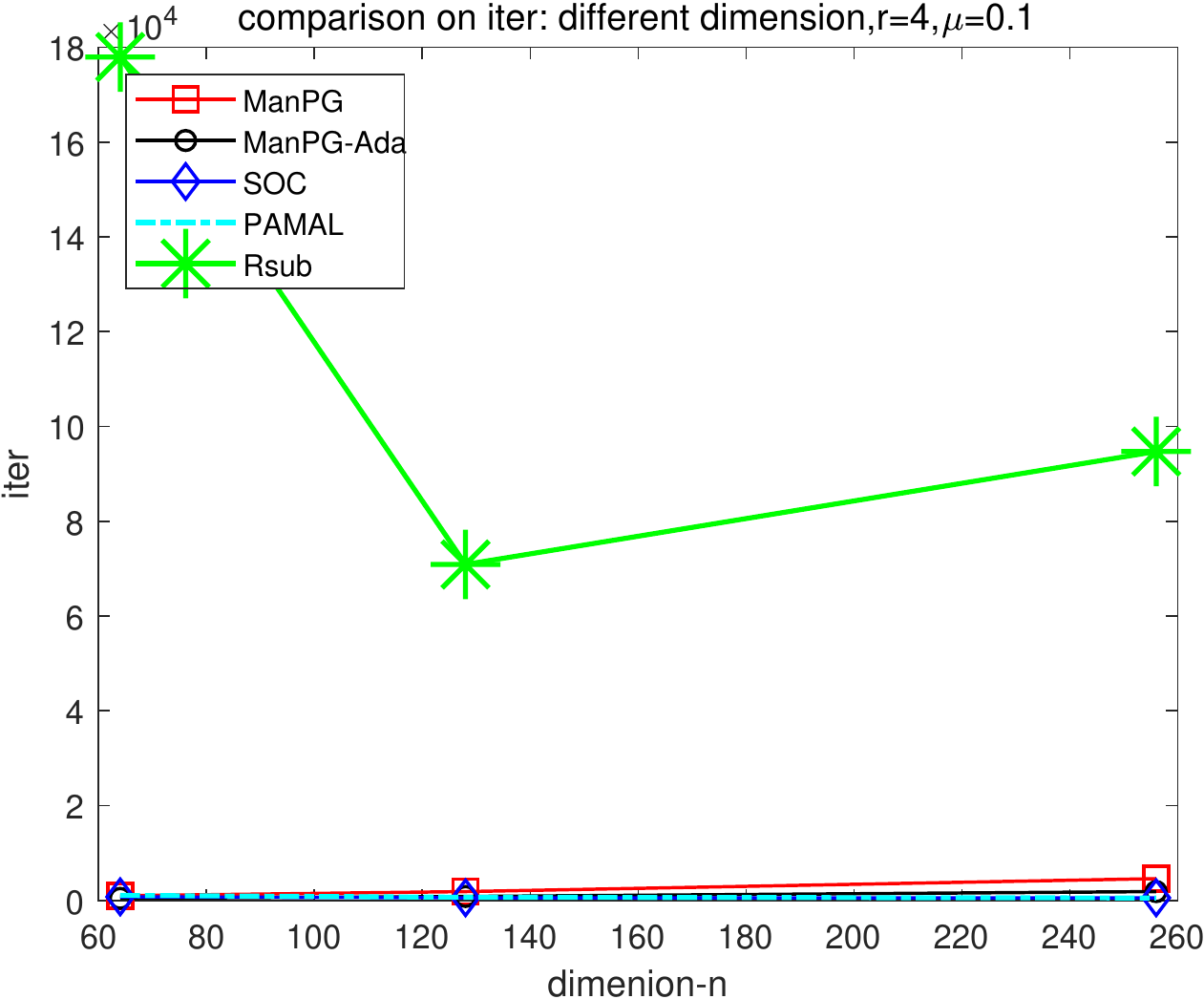}}
		\endminipage\hfill
		\minipage{0.33\textwidth}
		\subfigure[Sparsity ]{\includegraphics[width=0.9\linewidth]{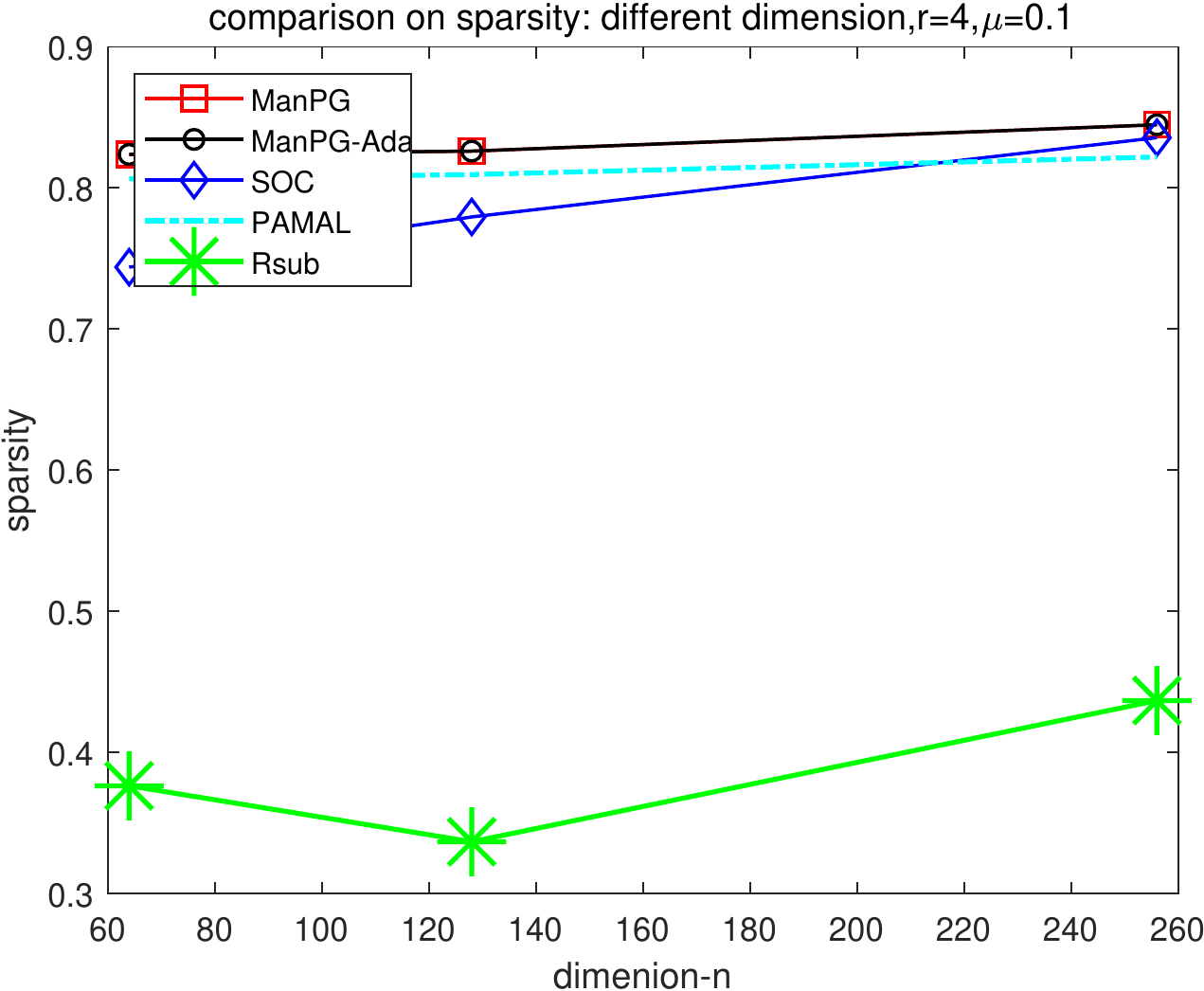}}
		\endminipage\hfill
		\caption{Comparison on CM problem \eqref{CM}, different dimension $n=\{64,128,256\}$ with $r=4$ and $\mu=0.1$.}
		\label{figure:CMS_dim_sub}
	\end{center}
	\vskip -0.1in
\end{figure}

\begin{figure}[H]
	\begin{center}
		\minipage{0.33\textwidth}
		\subfigure[CPU ]{
			\centerline{\includegraphics[width=0.9\linewidth]{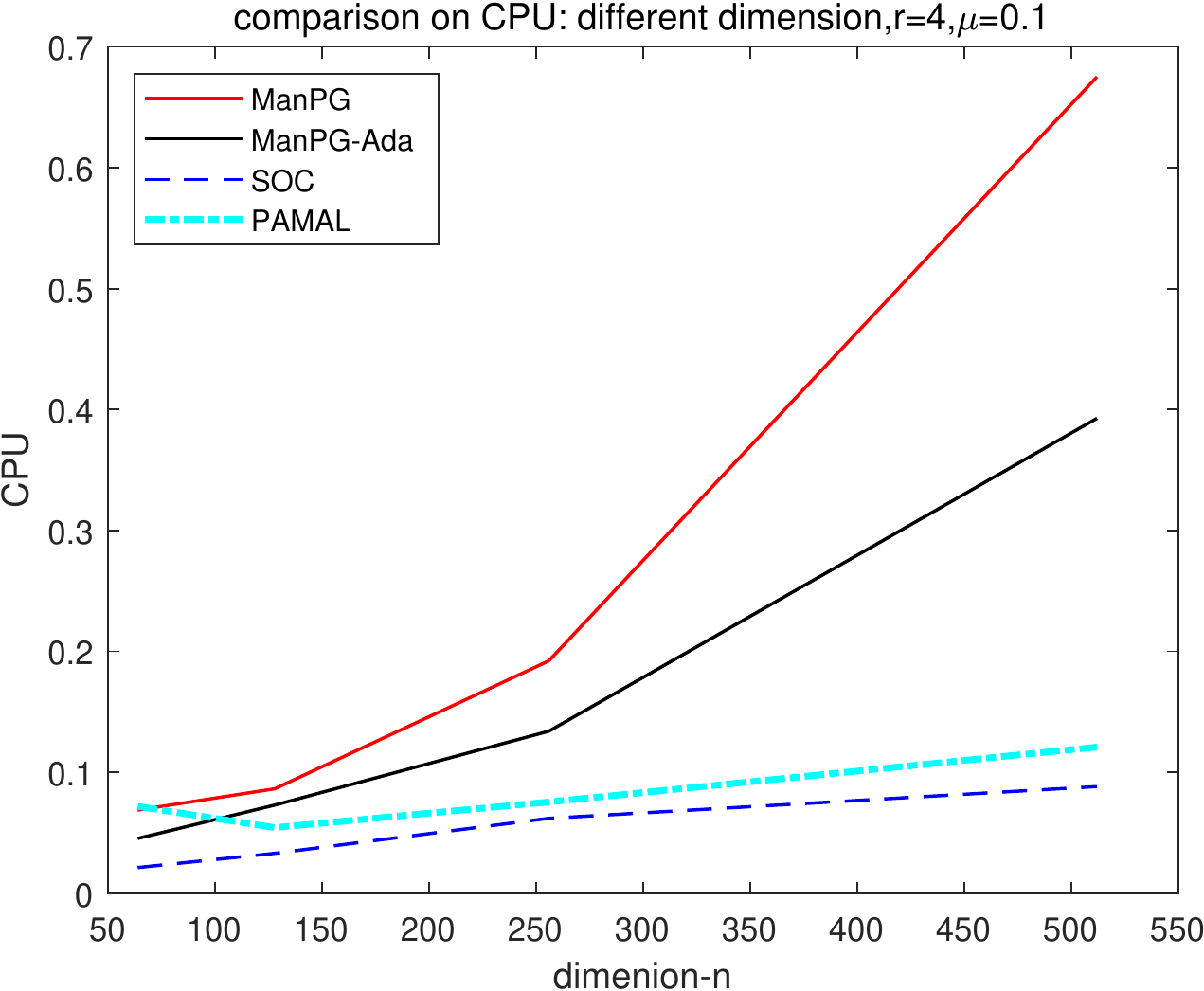}}}
		\endminipage\hfill
		\minipage{0.33\textwidth}
		\subfigure[ Iteration ]{\includegraphics[width=0.9\linewidth]{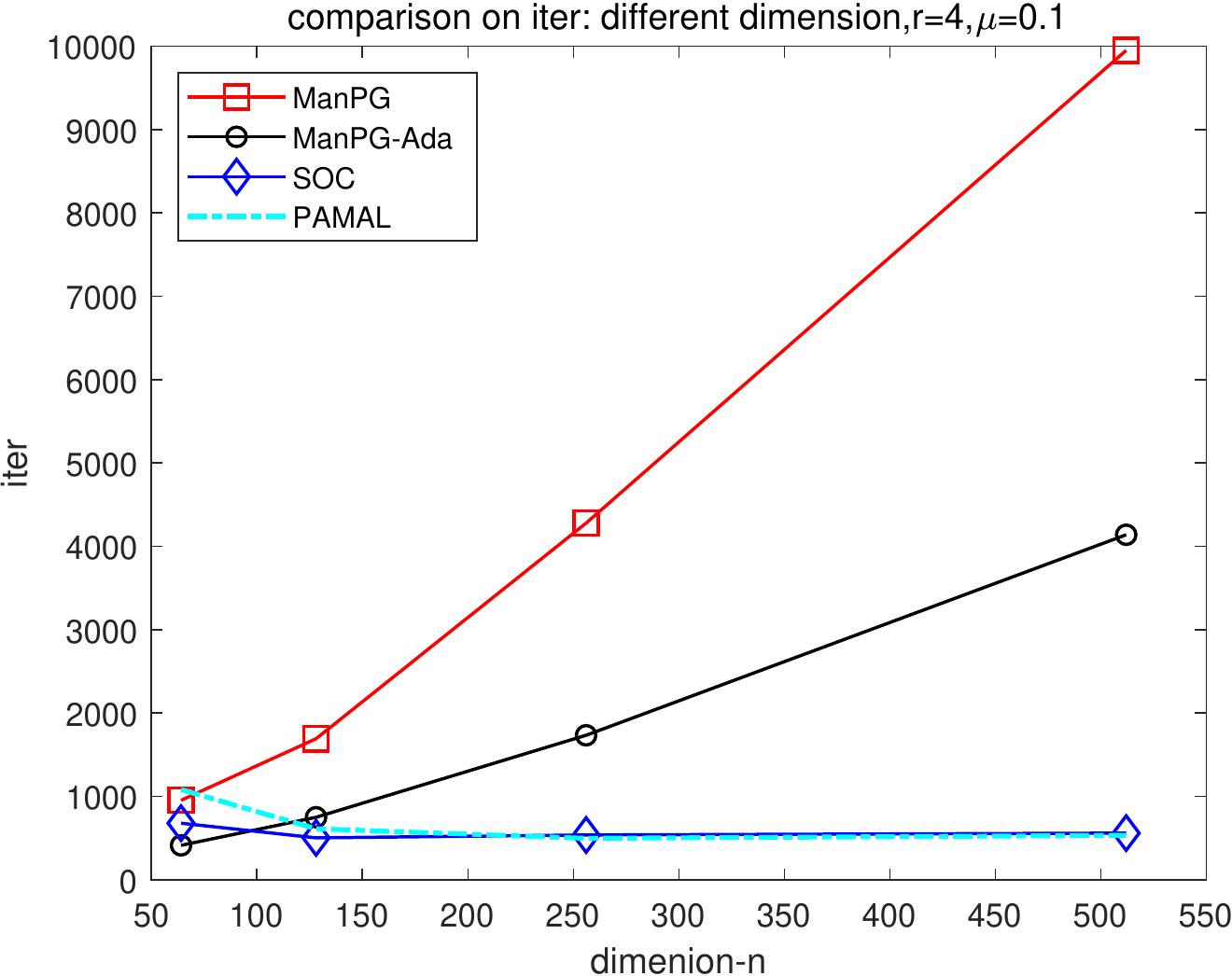}}
		\endminipage\hfill
		\minipage{0.33\textwidth}
		\subfigure[Sparsity ]{\includegraphics[width=0.9\linewidth]{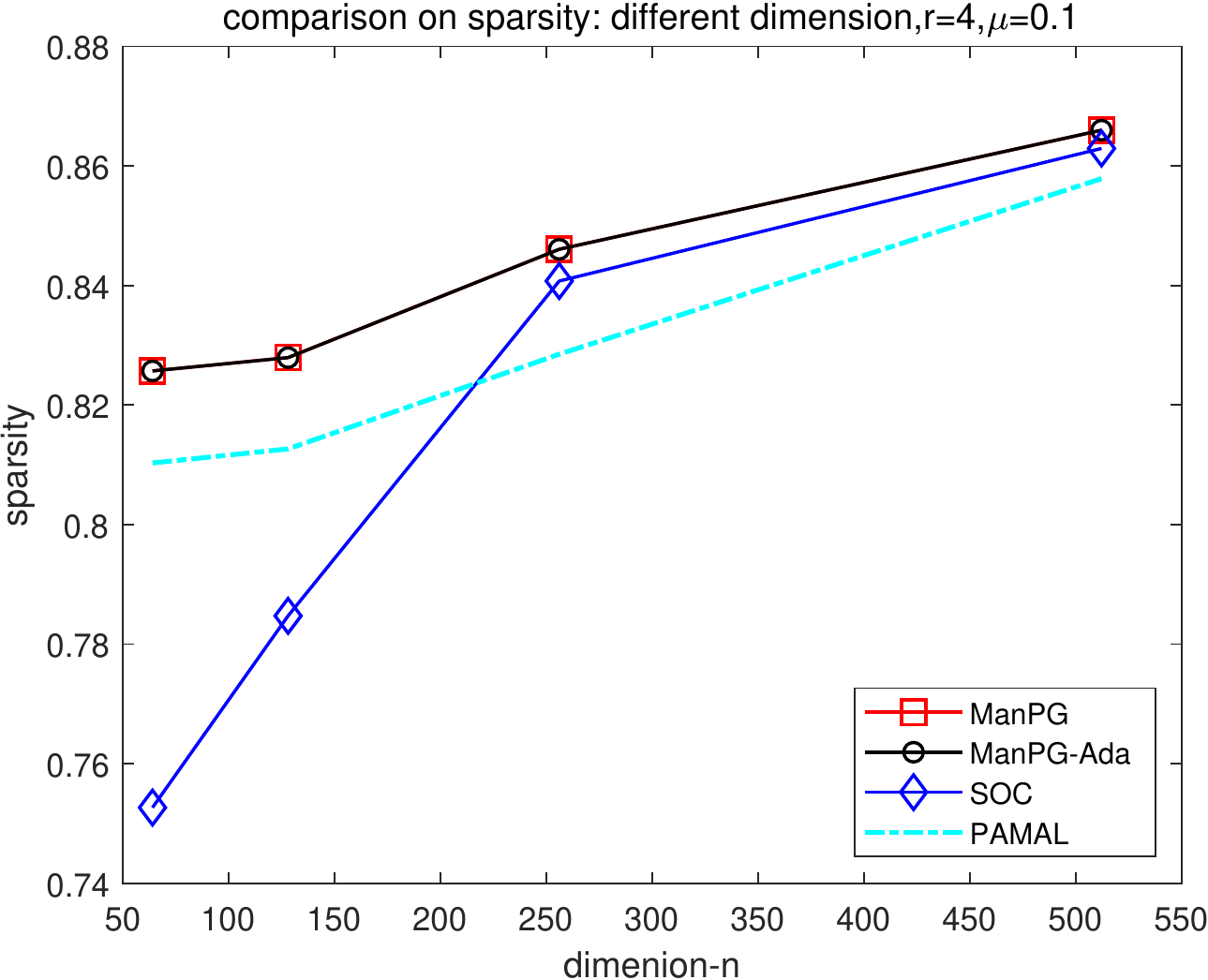}}
		\endminipage\hfill
		\caption{Comparison on CM problem \eqref{CM}, different dimension $n=\{64,128,256,512\}$ with $r=4$ and $\mu=0.1$.}
		\label{figure:CMS_dim}
	\end{center}
	\vskip -0.1in
\end{figure}

\begin{figure}[H]
	\begin{center}
		\minipage{0.33\textwidth}
		\subfigure[CPU ]{
			\centerline{\includegraphics[width=0.9\linewidth]{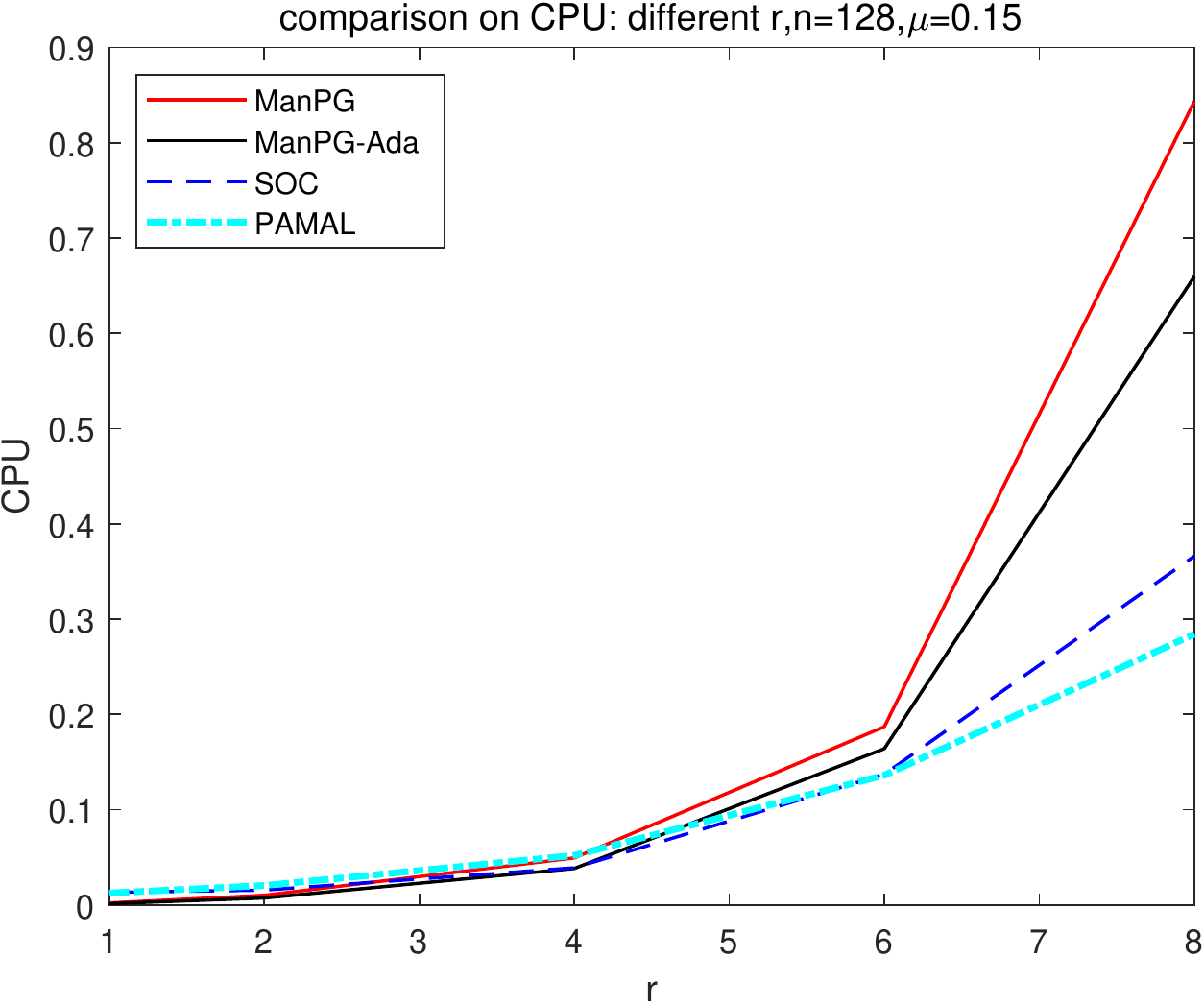}}}
		\endminipage\hfill
		\minipage{0.33\textwidth}
		\subfigure[ Iteration ]{\includegraphics[width=0.9\linewidth]{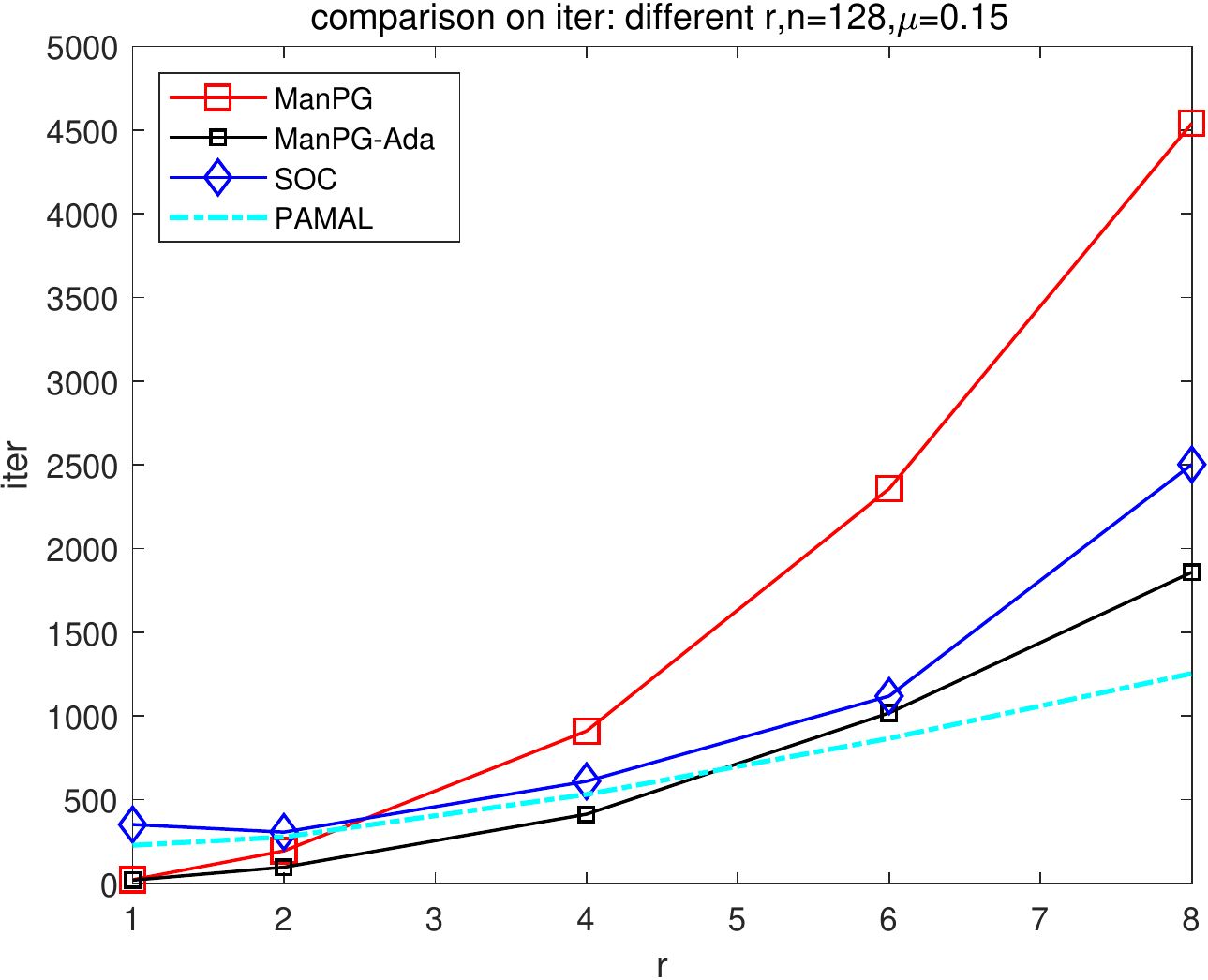}}
		\endminipage\hfill
		\minipage{0.33\textwidth}
		\subfigure[Sparsity ]{\includegraphics[width=0.9\linewidth]{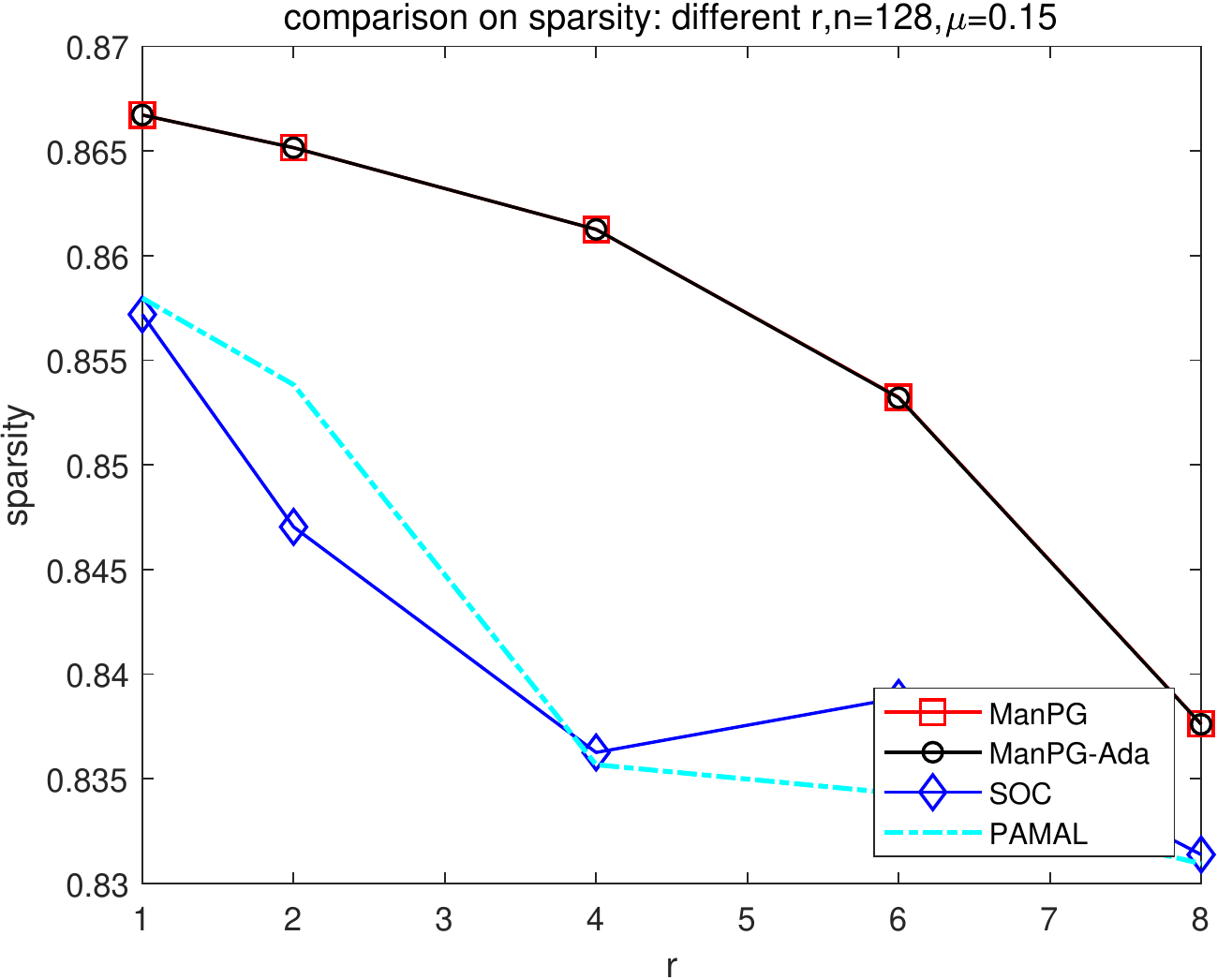}}
		\endminipage\hfill
		\caption{Comparison on CM problem \eqref{CM}, different  $r=\{1,2,4,6,8\}$ with $n=128$ and $\mu=0.15$.}
		\label{figure:CMS_rank}
	\end{center}
	\vskip -0.1in
\end{figure}

\begin{figure}[ht]
	\begin{center}
		\minipage{0.33\textwidth}
		\subfigure[CPU ]{
			\centerline{\includegraphics[width=0.9\linewidth]{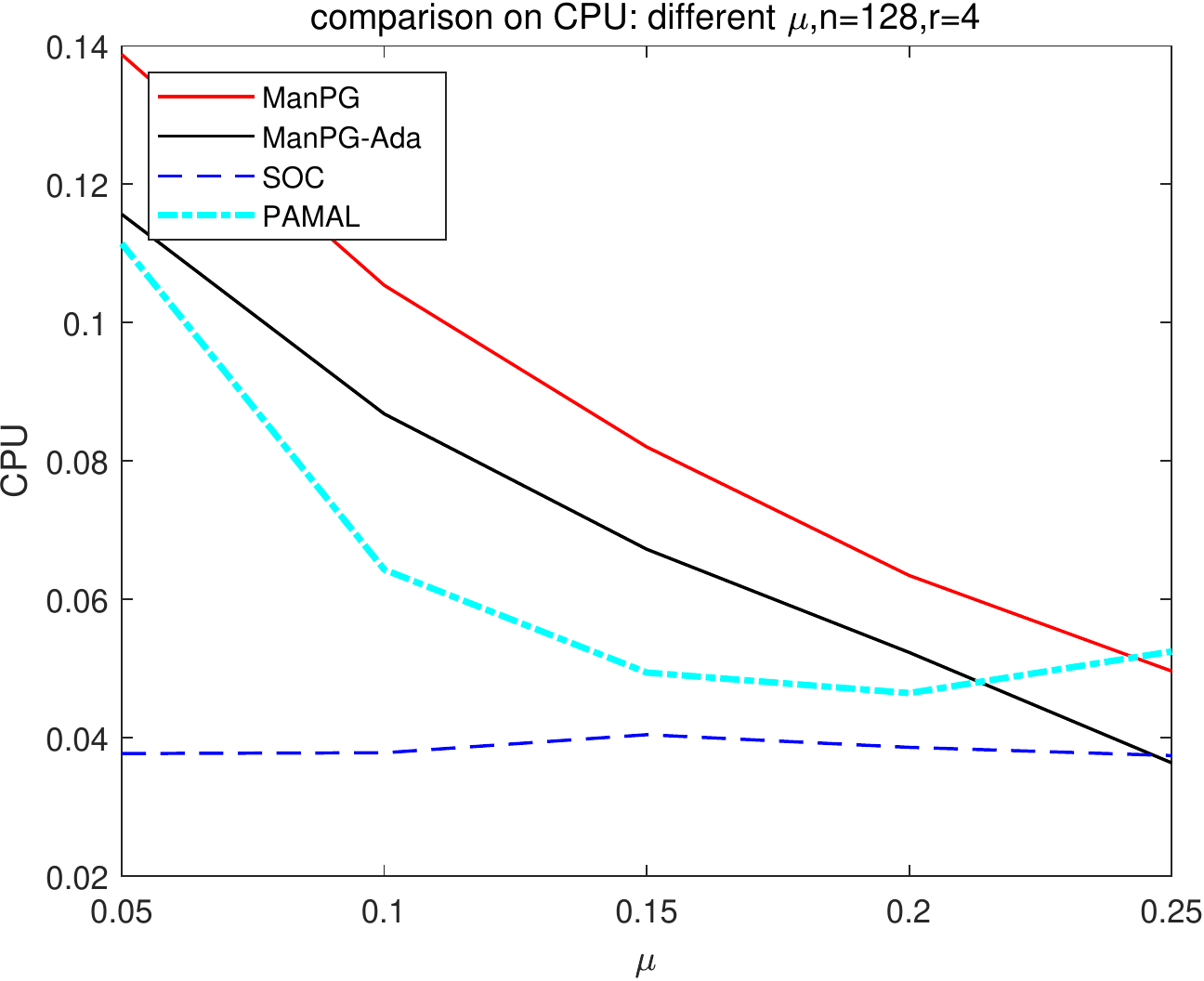}}}
		\endminipage\hfill
		\minipage{0.33\textwidth}
		\subfigure[ Iteration ]{\includegraphics[width=0.9\linewidth]{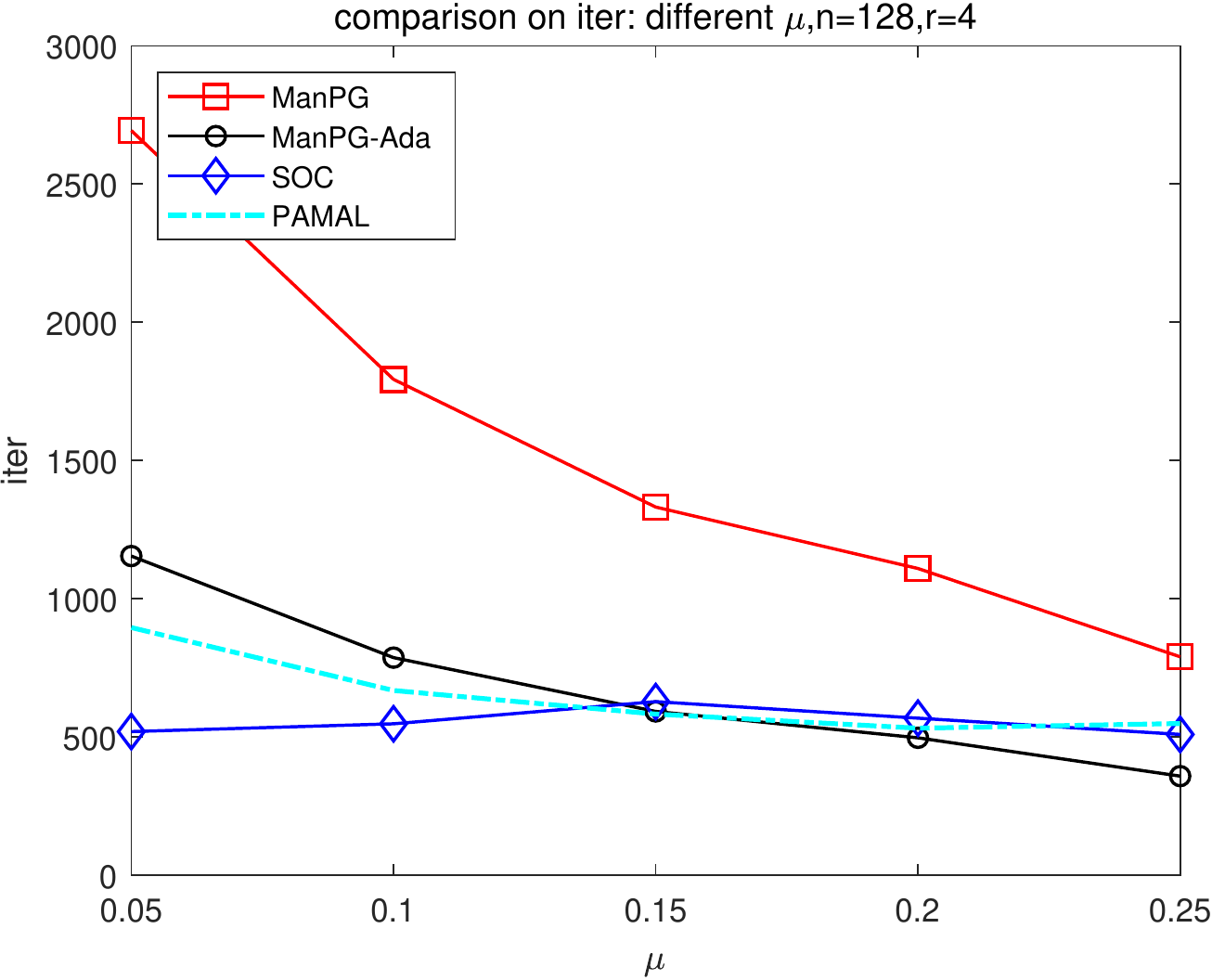}}
		\endminipage\hfill
		\minipage{0.33\textwidth}
		\subfigure[Sparsity ]{\includegraphics[width=0.9\linewidth]{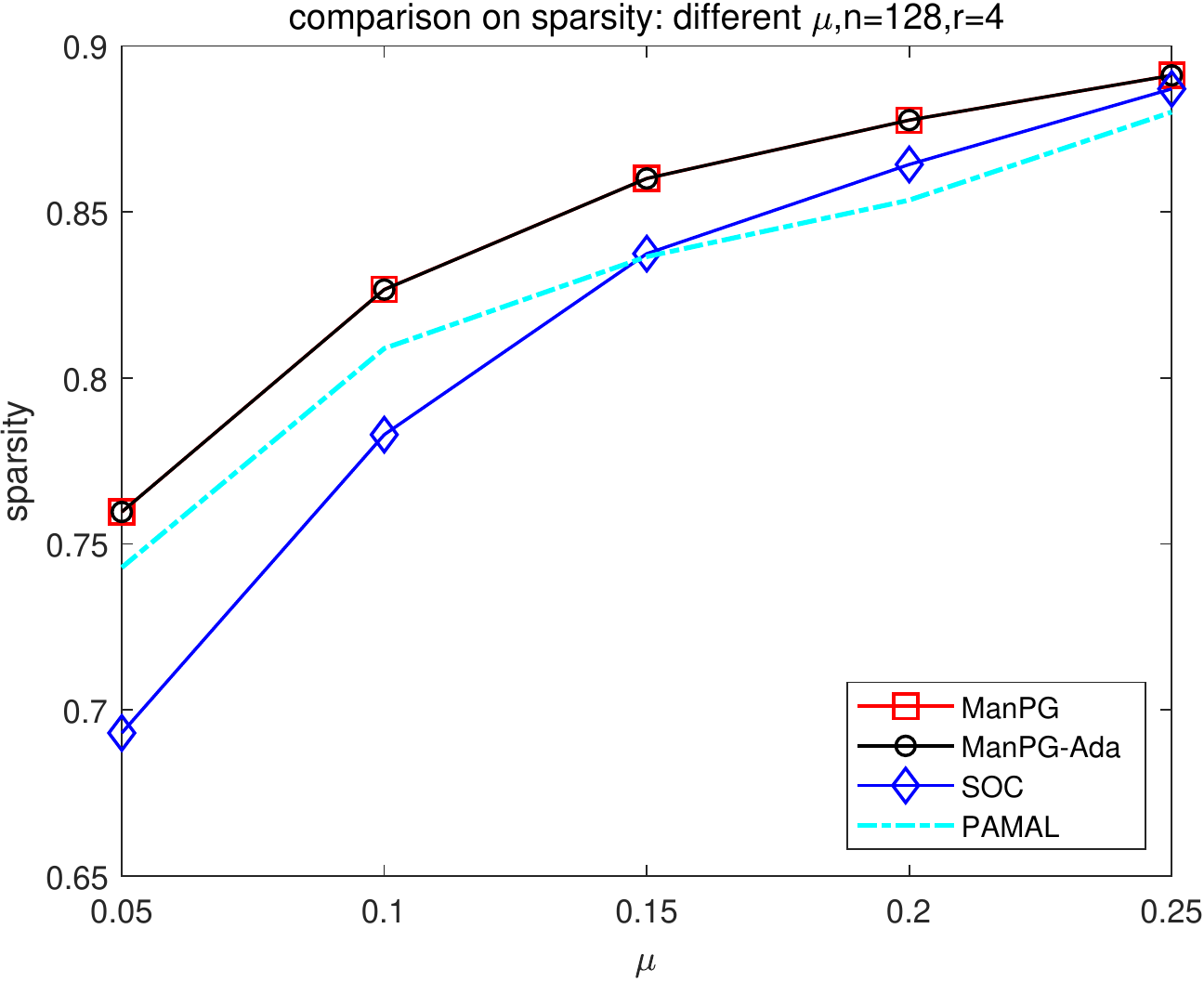}}
		\endminipage\hfill
		\caption{Comparison on CM problem \eqref{CM}, different  $\mu=\{0.05,0.1,0.15,0.2,0.25 \}$ with $n=128$ and $r=4$.}
		\label{figure:CMS_sp_mu}
	\end{center}
	\vskip -0.1in
\end{figure}

\begin{table}[H]\small
	\centering
	\caption{Number of line search steps and averaged SSN iterations for different $(n,r,\mu)$.}
	\begin{tabular}{c|cc|cc|cc}
		\hline
		\hline
		\multicolumn{1}{c}{} & \multicolumn{2}{c}{ManPG} & \multicolumn{2}{c}{ManPG-Ada} & SOC   & PAMAL \bigstrut\\
		\hline
		\multicolumn{1}{c}{} & \# line search  & \multicolumn{1}{c}{SSN iter} & \# line search  & \multicolumn{1}{c}{SSN iter} & \# s$|$ d$|$ f & \# s$|$ d$|$ f \bigstrut\\
		\hline
		\multicolumn{1}{c}{$n$} & \multicolumn{6}{c}{$r=4,\mu=0.1$} \bigstrut\\
		\hline
		64    & 85.94 & 1.0005 & 165.98 & 1.3307 & 50$|$0$|$0 & 48$|$2$|$0 \bigstrut[t]\\
		128   & 70.5  & 0.64414 & 540.76 & 1.2237 & 50$|$0$|$0 & 50$|$0$|$0 \\
		256   & 84.06 & 0.39686 & 1191.5 & 0.60652 & 50$|$0$|$0 & 50$|$0$|$0 \\
		512   & 55.1  & 0.16622 & 2720.6 & 0.2417 & 50$|$0$|$0 & 49$|$1$|$0 \bigstrut[b]\\
		\hline
		\multicolumn{1}{c}{$\mu$} & \multicolumn{6}{c}{$n =128 , r = 4$} \bigstrut\\
		\hline
		0.05  & 49.2  & 0.30933 & 695.6 & 0.83637 & 50$|$0$|$0 & 50$|$0$|$0 \bigstrut[t]\\
		0.1   & 74.38 & 0.54915 & 572.42 & 1.1514 & 50$|$0$|$0 & 50$|$0$|$0 \\
		0.15  & 102.62 & 0.82093 & 439.6 & 1.2899 & 50$|$0$|$0 & 50$|$0$|$0 \\
		0.2   & 82.52 & 0.81565 & 350.86 & 1.2114 & 50$|$0$|$0 & 50$|$0$|$0 \\
		0.25  & 93.3  & 0.57232 & 209.12 & 1.0122 & 50$|$0$|$0 & 48$|$2$|$0 \bigstrut[b]\\
		\hline
		\multicolumn{1}{c}{$r$} & \multicolumn{6}{c}{$n=128, \mu=0.15$} \bigstrut\\
		\hline
		1     & 0     & 0.8971 & 0     & 0.98694 & 50$|$0$|$0 & 50$|$0$|$0 \bigstrut[t]\\
		2     & 3.48  & 1.0001 & 61.02 & 1.1135 & 50$|$0$|$0 & 50$|$0$|$0 \\
		4     & 86.92 & 0.91814 & 311   & 1.2812 & 50$|$0$|$0 & 50$|$0$|$0 \\
		6     & 169.8 & 0.60206 & 719.42 & 1.5195 & 50$|$0$|$0 & 49$|$1$|$0 \\
		8     & 216.54 & 1.2011 & 1198.8 & 2.8667 & 50$|$0$|$0 & 42$|$8$|$0 \bigstrut[b]\\
		\hline
	\end{tabular}%
	\label{tab:CMs_ave_line_SSN}%
\end{table}%

\subsection{Numerical Results on Sparse PCA}

In this section, we compare the performance of ManPG, ManPG-Ada, SOC, and PAMAL for solving the sparse PCA problem \eqref{spca}. Note that there are other algorithms for sparse PCA such as the ones proposed in \cite{Jolliffe2003,daspremont-sparsePCA-direct-formulation-2007}, but these methods work only for the special case when $r=1$; i.e., the constraint set is a sphere. The algorithm proposed in \cite{Genicot-weakly-2015} needs to smooth the $\ell_1$ norm in order to apply existing gradient-type methods and thus the sparsity of the solution is no longer guaranteed. Algorithms proposed in \cite{Zou-spca-2006,Shen-Huang-spca-2008,Journee-Nesterov-sparsePCA-JMLR-2010} do not impose orthogonal loading directions. In other words, they cannot impose both sparsity and orthogonality on the same variable. Therefore, we chose not to compare our ManPG with these algorithms.

The random data matrices $A\in \R^{m\times n}$ considered in this section were generated in the following way. We first generate a random matrix using the MATLAB function $A=randn(m,n)$, then shift the columns of $A$ so that their mean is equal to $0$, and lastly normalize the columns so that their Euclidean norms are equal to one. In all tests, $m$ is equal to 50. The Lipschitz constant $L$ is $2\sigma^2_{\max}(A)$, so we use $t =1/(2\sigma^2_{\max}(A))$ in Algorithms \ref{alg:ManPG} and \ref{alg:ManPG-adap}, where $\sigma_{\max}(A)$ is the largest singular value of $A$. Again, we spent a lot of effort on tuning the parameters for SOC and PAMAL and found that the following settings of the parameters worked best for our tested problems. For SOC, we set the penalty parameters $\beta = 2\sigma^2_{\max}(A)$. For PAMAL, we set $\tau=0.99$, $\gamma=1.001$, $\rho^{1}= 5\sigma^2_{\max}(A)$, $\overline{\Lambda}_{p, \min }=-100$, $\overline{\Lambda}_{p, \max }=100$, $\Lambda_{p}^{1}=0_{nr}, p=1,2,$ and
$\epsilon^{k}=(0.996)^{k}$, $k \in \mathbb{N}$. We again refer the reader to page B587 of \cite{Chen2016} for the meanings of these parameters. We used the same parameters of PAM in PAMAL as suggested in \cite{Chen2016}. 
We used the same stopping criterion for ManPG, ManPG-Ada, SOC, and PAMAL as for the CM problems. For different settings of $(n,r,\mu)$, we ran the four algorithms with $50$ instances whose initial points were obtained by projecting randomly generated points onto $\St(n,r)$. We then ran the  Riemannian subgradient method \eqref{alg:rieman_subgrad} for 500 iterations and used the returned solution to be the initial point of the compared solvers. 

The CPU time, iteration number, and sparsity are reported in Figures \ref{figure:SPCA_dim}, \ref{figure:SPCA_rank}, and \ref{figure:SPCA_sp_mu}. Same as the CM problem, all the values were averaged over those instances that yielded solutions that were close to the ones given by ManPG. 
In Figures \ref{figure:SPCA_dim}, \ref{figure:SPCA_rank} and \ref{figure:SPCA_sp_mu}, we see that ManPG and ManPG-Ada significantly outperformed SOC and PAMAL in terms of CPU time required to obtain the same solutions. 
We also see that ManPG-Ada greatly improved the performance of ManPG. We also report the total number of line search steps and the averaged iteration number of SSN in ManPG and ManPG-Ada in Table \ref{tab:SPCA_ave_line_SSN}. We observe from Table \ref{tab:SPCA_ave_line_SSN} that SOC failed to converge on one instance, and for several instances, SOC and PAMAL generated different solutions when compared to those generated by ManPG. 

\begin{figure}[H]
	\begin{center}
		\minipage{0.33\textwidth}
		\subfigure[CPU ]{
			\centerline{\includegraphics[width=0.9\linewidth]{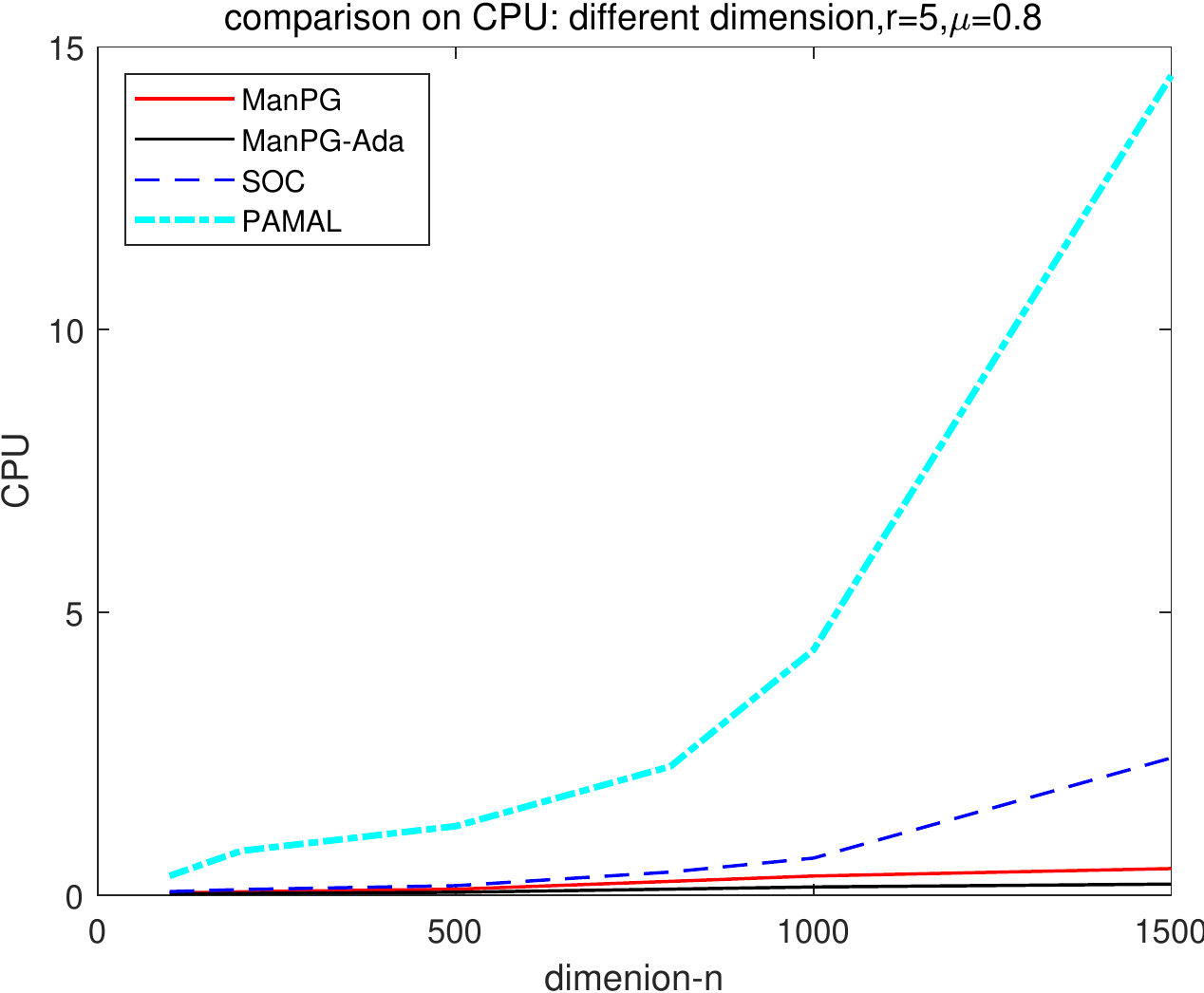}}}
		\endminipage\hfill
		\minipage{0.33\textwidth}
		\subfigure[ Iteration ]{\includegraphics[width=0.9\linewidth]{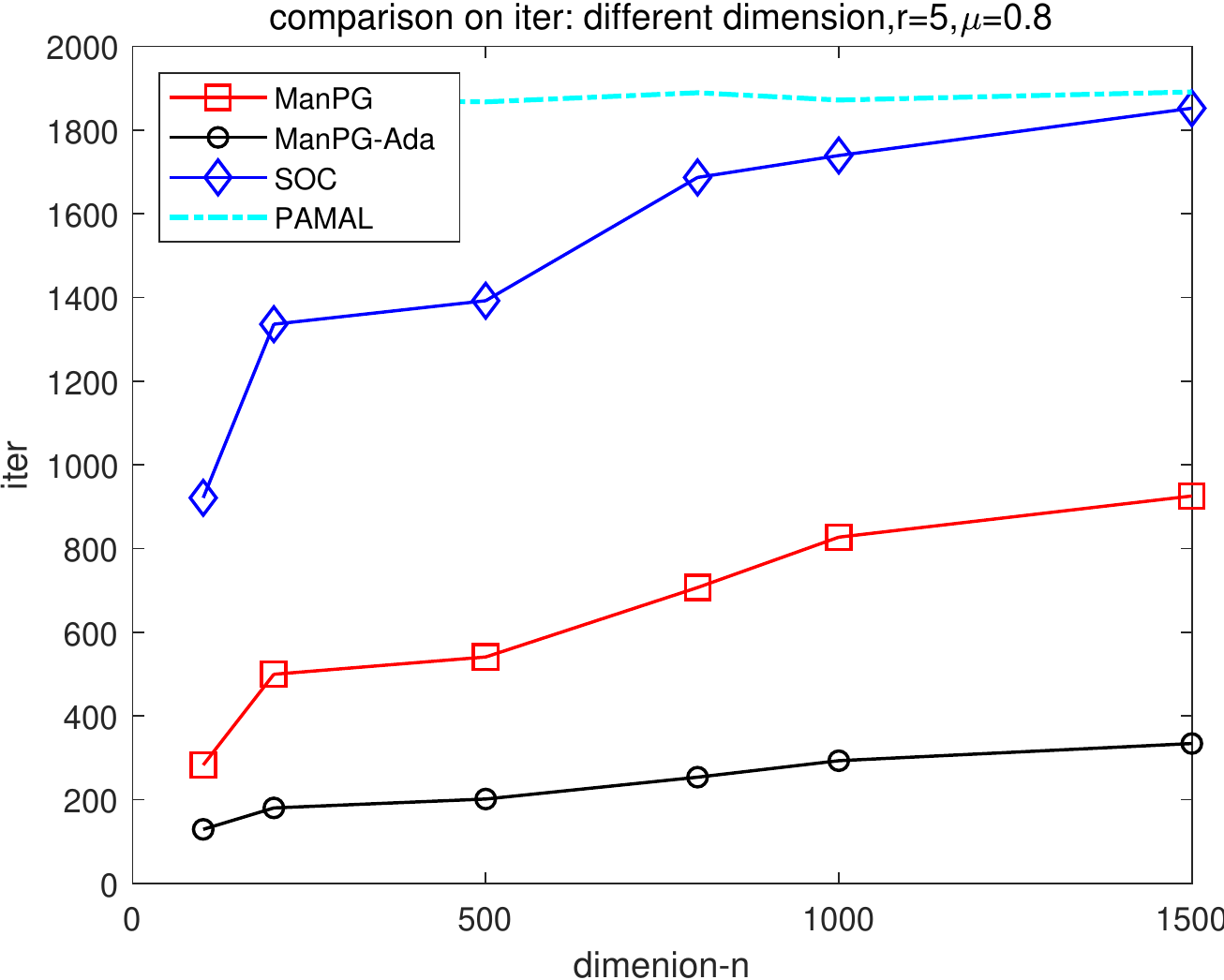}}
		\endminipage\hfill
		\minipage{0.33\textwidth}
		\subfigure[Sparsity ]{\includegraphics[width=0.9\linewidth]{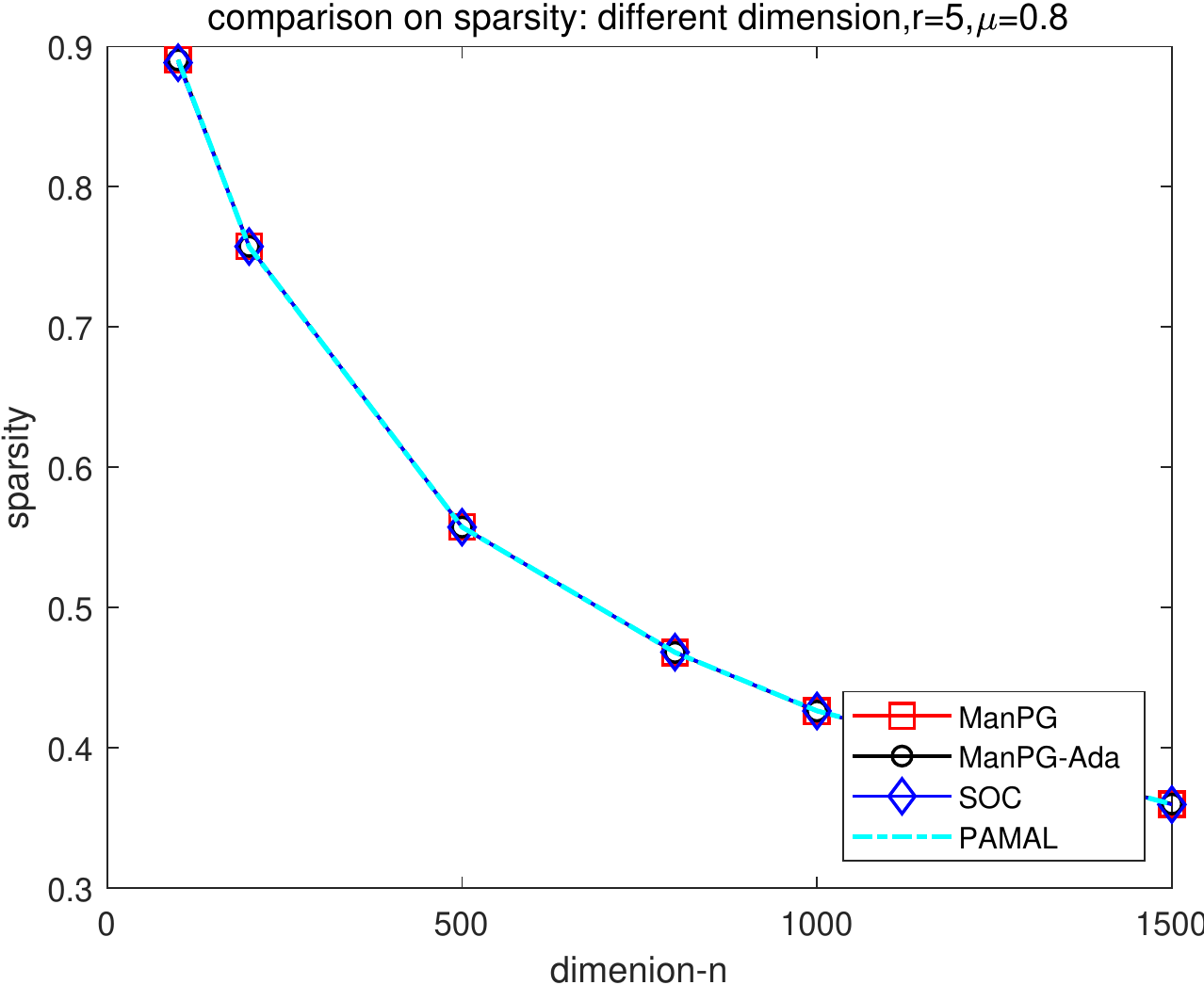}}
		\endminipage\hfill
		\caption{Comparison on SPCA problem \eqref{spca}, different dimension $n=\{100,200,500,800,1000,1500\}$ with $r=5$ and $\mu=0.8$.}
		\label{figure:SPCA_dim}
	\end{center}
	\vskip -0.1in
\end{figure}

\begin{figure}[H]
	\begin{center}
		\minipage{0.33\textwidth}
		\subfigure[CPU ]{
			\centerline{\includegraphics[width=0.9\linewidth]{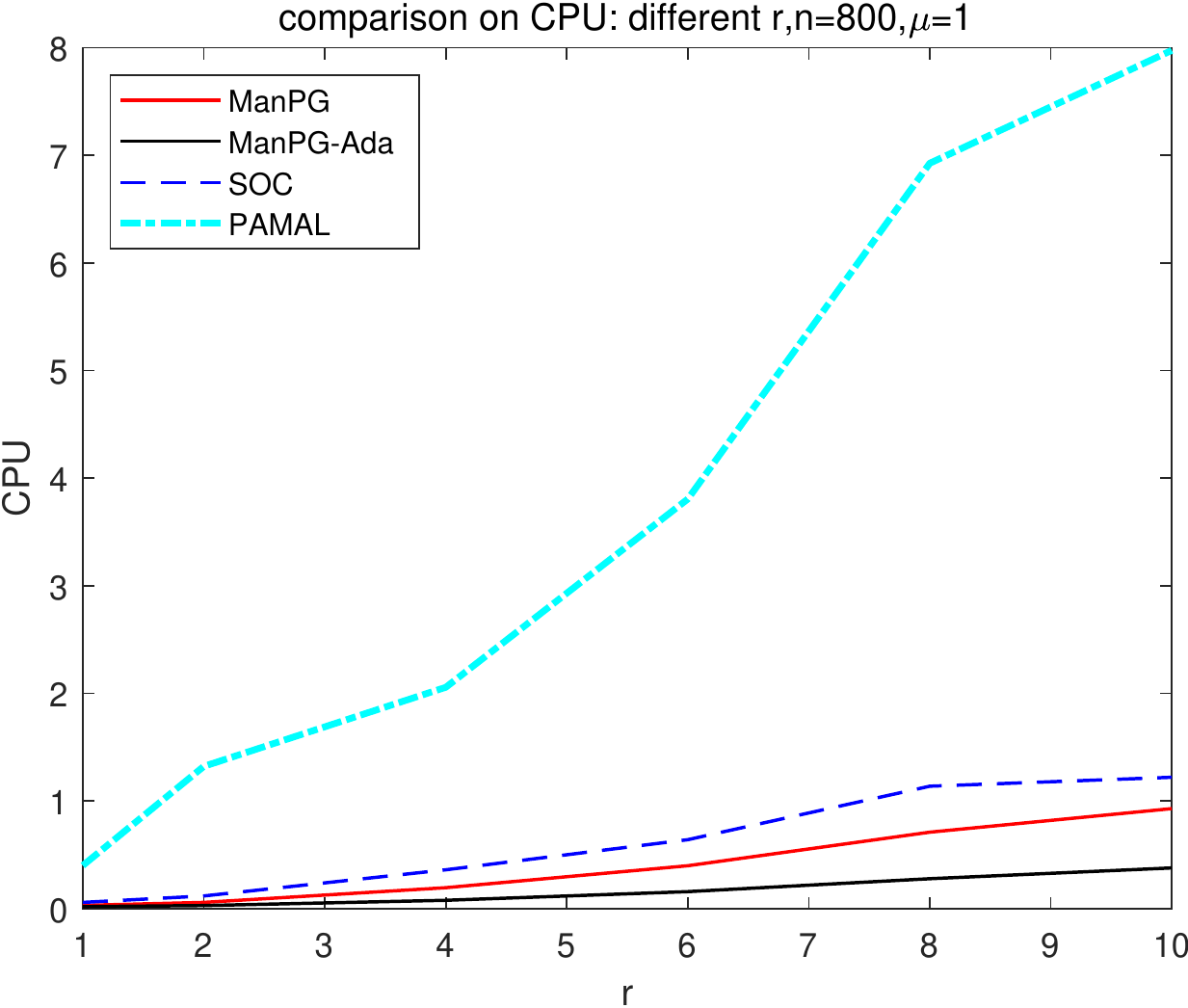}}}
		\endminipage\hfill
		\minipage{0.33\textwidth}
		\subfigure[ Iteration ]{\includegraphics[width=0.9\linewidth]{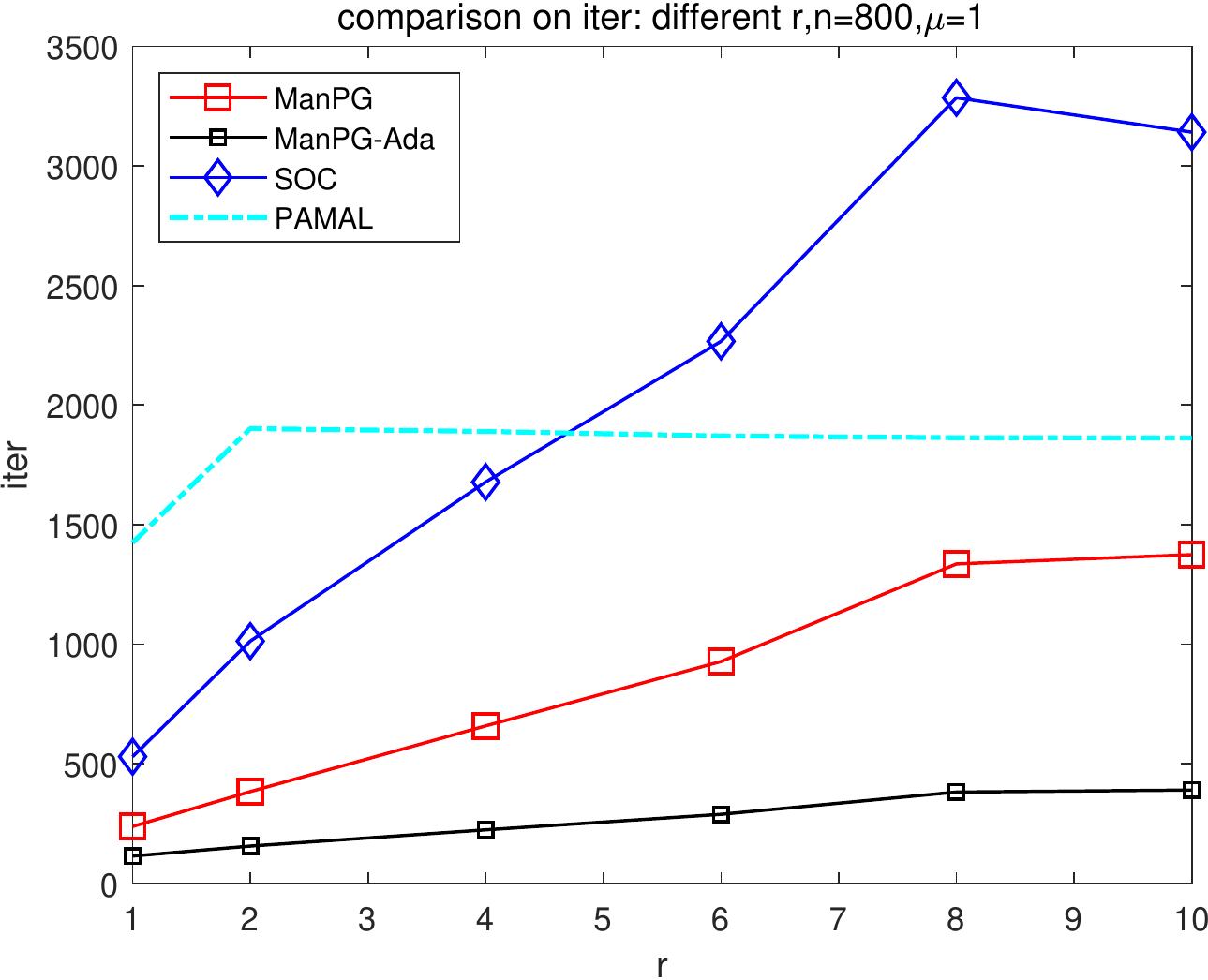}}
		\endminipage\hfill
		\minipage{0.33\textwidth}
		\subfigure[Sparsity ]{\includegraphics[width=0.9\linewidth]{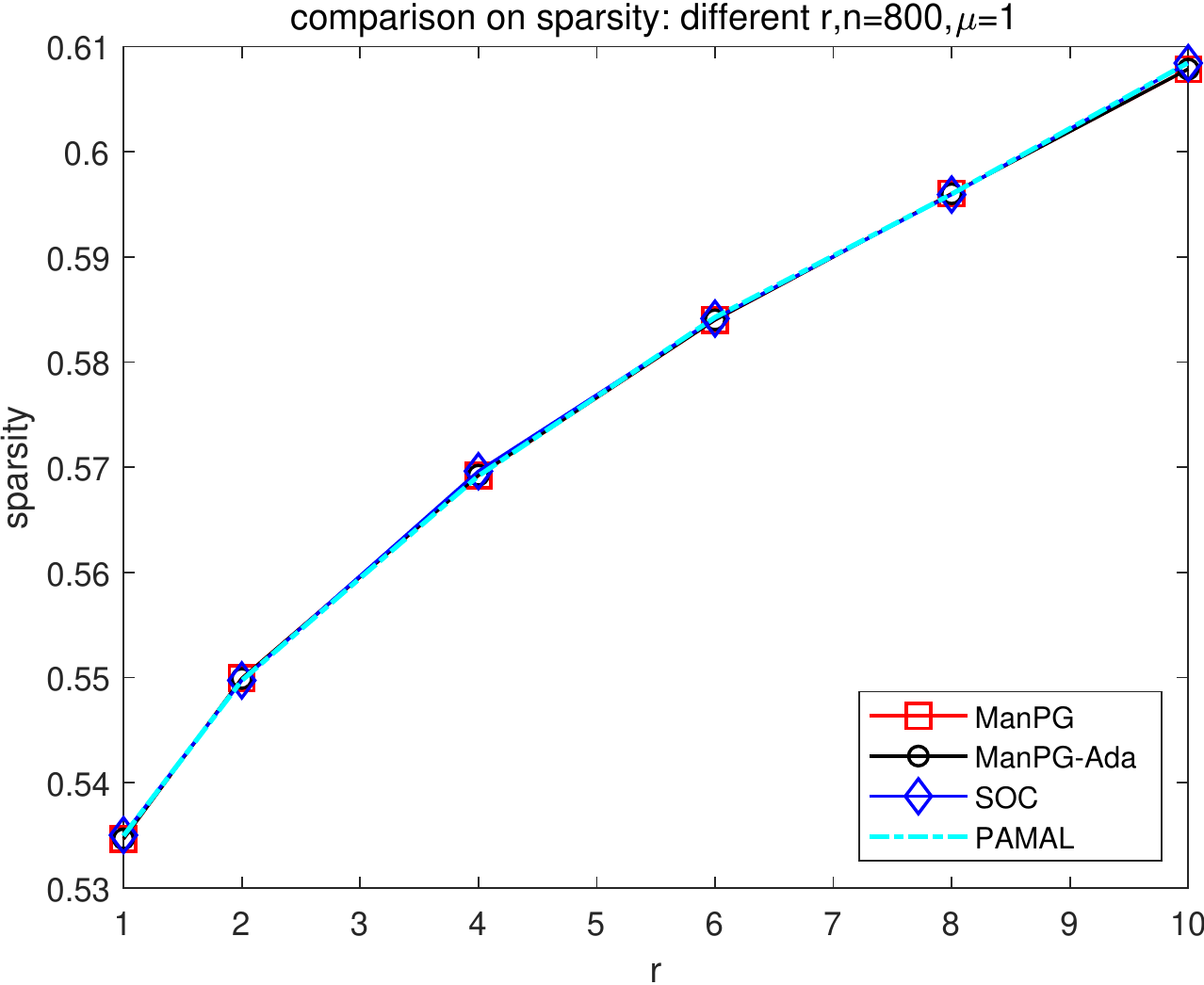}}
		\endminipage\hfill
		\caption{Comparison on SPCA problem \eqref{spca}, different  $r=\{1,2,4,6,8,10\}$ with $n=800$ and $\mu=1$.}
		\label{figure:SPCA_rank}
	\end{center}
	\vskip -0.1in
\end{figure}

\begin{figure}[H]
	\begin{center}
		\minipage{0.33\textwidth}
		\subfigure[CPU ]{
			\centerline{\includegraphics[width=0.9\linewidth]{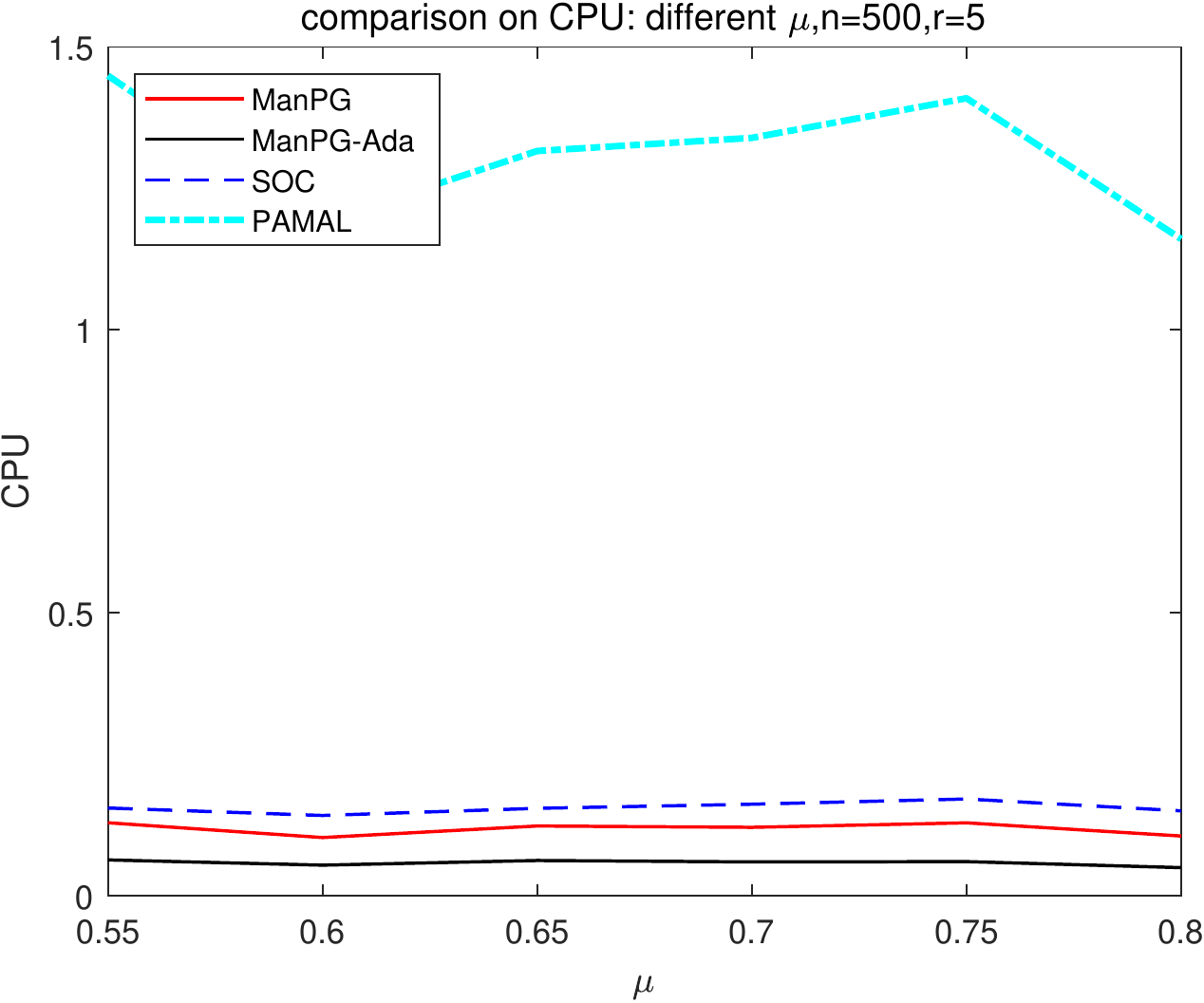}}}
		\endminipage\hfill
		\minipage{0.33\textwidth}
		\subfigure[ Iteration ]{\includegraphics[width=0.9\linewidth]{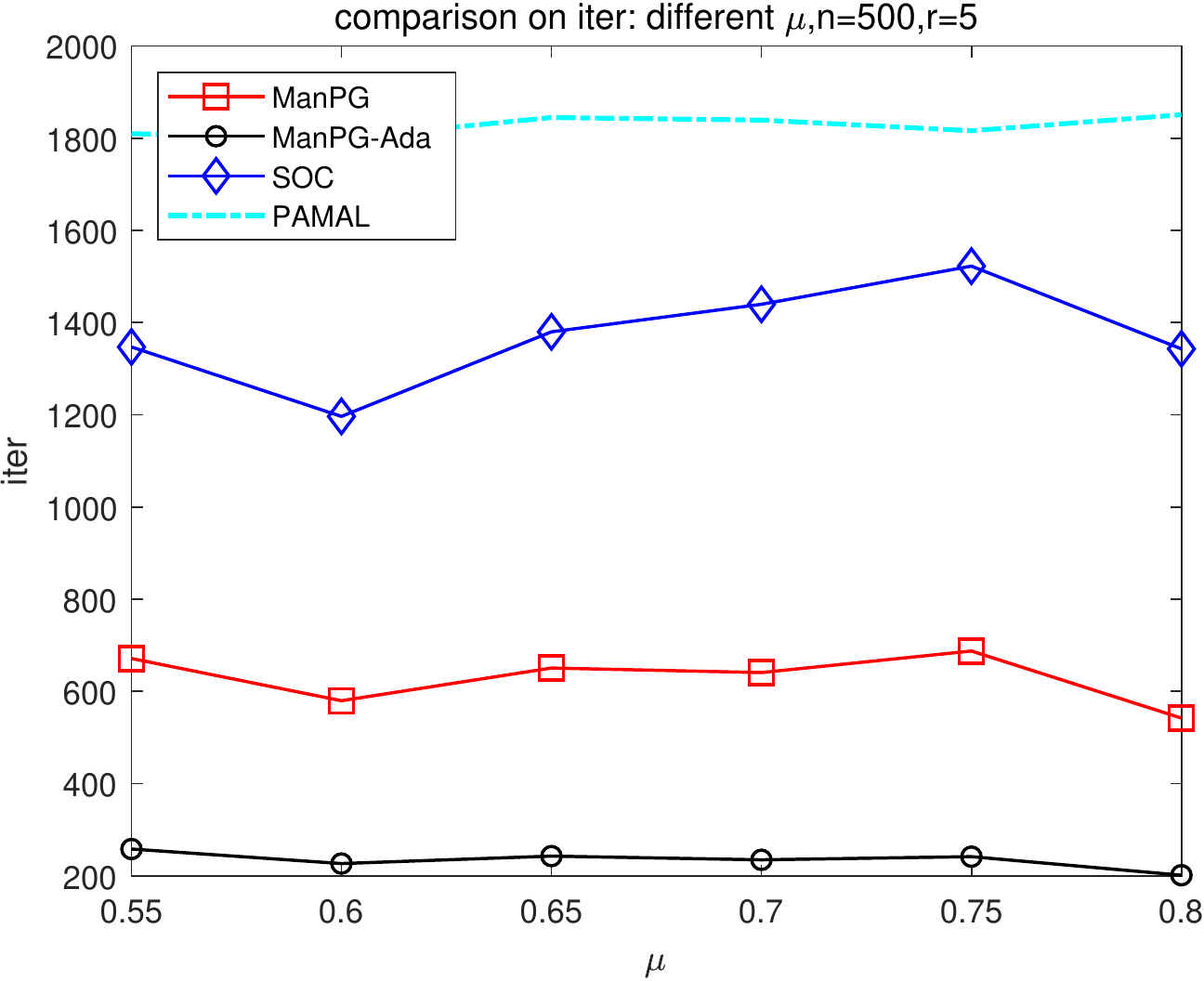}}
		\endminipage\hfill
		\minipage{0.33\textwidth}
		\subfigure[Sparsity ]{\includegraphics[width=0.9\linewidth]{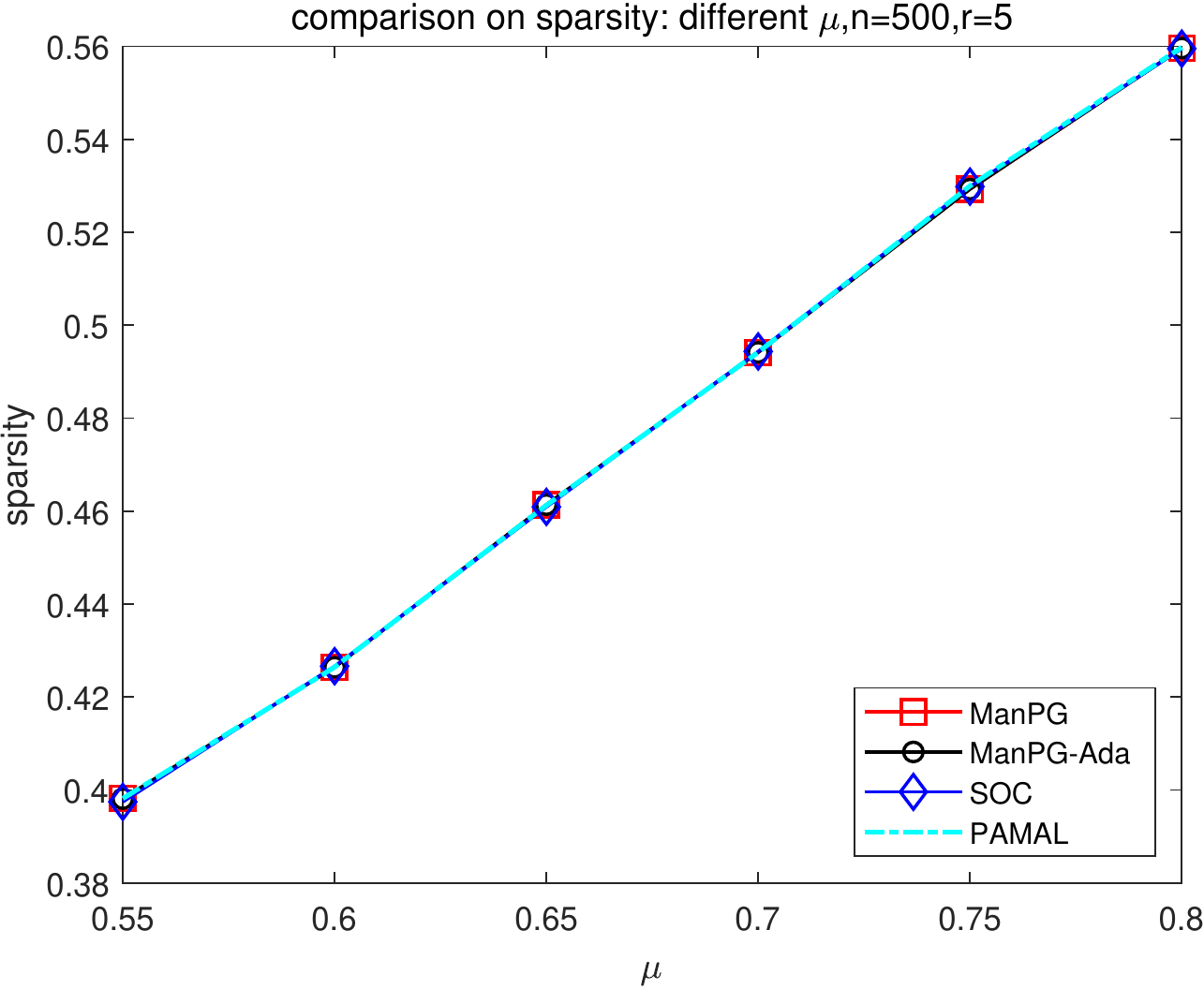}}
		\endminipage\hfill
		\caption{Comparison on SPCA problem \eqref{spca}, different  $\mu=\{0.55,0.6,0.65,0.7,0.75,0.8 \}$ with $n=500$ and $r=5$.}
		\label{figure:SPCA_sp_mu}
	\end{center}
	\vskip -0.1in
\end{figure}
\begin{table}[htbp]\small
	\centering
	\caption{SPCA: Number of line search steps and averaged SSN iterations for different $(n,r,\mu)$.}
	\begin{tabular}{c|cc|cc|cc}
		\hline
		\hline
		\multicolumn{1}{c}{} & \multicolumn{2}{c}{ManPG} & \multicolumn{2}{c}{ManPG-Ada} & SOC   & PAMAL \bigstrut\\
		\hline
		\multicolumn{1}{c}{} & \# line search  & \multicolumn{1}{c}{SSN iter} & \# line search  & \multicolumn{1}{c}{SSN iter} & \# s$|$ d$|$ f & \# s$|$ d$|$ f \bigstrut\\
		\hline
		\multicolumn{1}{c}{$n$} & \multicolumn{6}{c}{$r=5,\mu=0.8$} \bigstrut\\
		\hline
		100   & 0.8   & 1.1881 & 0.08  & 1.5221 & 46$|$3$|$1 & 50$|$0$|$0 \bigstrut[t]\\
		200   & 2.98  & 1.0722 & 15.1  & 1.3705 & 48$|$2$|$0 & 48$|$2$|$0 \\
		500   & 0.4   & 1.025 & 29.4  & 1.2066 & 50$|$0$|$0 & 50$|$0$|$0 \\
		800   & 0     & 1.0167 & 59.36 & 1.1847 & 49$|$1$|$0 & 50$|$0$|$0 \\
		1000  & 3.08  & 1.016 & 82.04 & 1.1712 & 49$|$1$|$0 & 49$|$1$|$0 \\
		15    & 11    & 1.0121 & 108.94 & 1.1035 & 48$|$2$|$0 & 49$|$1$|$0 \bigstrut[b]\\
		\hline
		\multicolumn{1}{c}{$\mu$} & \multicolumn{6}{c}{$n =500 , r = 5$} \bigstrut\\
		\hline
		0.55  & 0     & 1.0155 & 68.7  & 1.1463 & 48$|$2$|$0 & 50$|$0$|$0 \bigstrut[t]\\
		0.60  & 0     & 1.0197 & 48.82 & 1.1431 & 50$|$0$|$0 & 49$|$1$|$0 \\
		0.65  & 0     & 1.019 & 57.96 & 1.1841 & 48$|$2$|$0 & 48$|$2$|$0 \\
		0.70  & 0     & 1.0246 & 52.5  & 1.2098 & 49$|$1$|$0 & 50$|$0$|$0 \\
		0.75  & 0.36  & 1.0238 & 55.88 & 1.2252 & 48$|$2$|$0 & 49$|$1$|$0 \\
		0.80  & 0     & 1.0286 & 28.98 & 1.1966 & 49$|$1$|$0 & 49$|$1$|$0 \bigstrut[b]\\
		\hline
		\multicolumn{1}{c}{$r$} & \multicolumn{6}{c}{$n=800, \mu=0.6$} \bigstrut\\
		\hline
		1     & 0     & 0.90182 & 4.12  & 1.0335 & 50$|$0$|$0 & 50$|$0$|$0 \bigstrut[t]\\
		2     & 82.06 & 1.0041 & 10.74 & 1.0767 & 49$|$1$|$0 & 50$|$0$|$0 \\
		4     & 8.52  & 1.0229 & 39.04 & 1.1453 & 48$|$2$|$0 & 50$|$0$|$0 \\
		6     & 0     & 1.0243 & 72.22 & 1.3198 & 46$|$4$|$0 & 49$|$1$|$0 \\
		8     & 0.34  & 1.0309 & 125.64 & 1.5325 & 46$|$4$|$0 & 50$|$0$|$0 \\
		10    & 0.76  & 1.0579 & 132.58 & 1.6894 & 42$|$8$|$0 & 47$|$3$|$0 \bigstrut[b]\\
		\hline
	\end{tabular}%
	\label{tab:SPCA_ave_line_SSN}%
\end{table}%

\section{Discussions and Concluding Remarks}\label{sec:conclude}


Manifold optimization has attracted a lot of attention recently. In this paper, we proposed a proximal gradient method (ManPG) for solving the nonsmooth nonconvex optimization problem over the Stiefel manifold \eqref{prob-g}.
Different from existing methods, our ManPG algorithm relies on proximal gradient information on the tangent space rather than subgradient information. Under the assumption that the the smooth part of the objective function has a Lipschitz continuous gradient, we proved that ManPG converges globally to a stationary point of \eqref{prob-g}. Moreover, we analyzed the iteration complexity of ManPG for obtaining an $\epsilon$-stationary solution. Our numerical experiments suggested that when combined with a regularized semi-smooth Newton method for finding the descent direction, ManPG performs efficiently and robustly. In particular, ManPG is more robust than SOC and PAMAL for solving the compressed modes and sparse PCA problems, as it is less sensitive to the choice of parameters. Moreover, ManPG significantly outperforms SOC and PAMAL for solving the sparse PCA problem in terms of the CPU time needed for obtaining the same solution.

 {
It is worth noting that the convergence and iteration complexity analyses in Section \ref{sec:convergence} also hold for other, not necessarily bounded, embedded submanifolds of an Euclidean space, provided that the objective function $F$ satisfies some additional assumptions (e.g., $F$ is coercive and and lower bounded on $\M$). We focused on the Stiefel manifold because it is easier to discuss the semi-smooth Newton method in Section \ref{sec:ssn} for finding the descent direction. As demonstrated in our tests on the compressed modes and sparse PCA problems, the efficiency of ManPG highly relies on that of solving the convex subproblem to find the descent direction. For general embedded submanifolds, it remains an interesting question whether the matrix $\mathcal{A}_k$ in \eqref{tangent-subproblem} can be easily computed and the resulting subproblem can be solved efficiently.}




\section*{Acknowledgements}
We are grateful to the associate editor and two anonymous referees for constructive and insightful comments that significantly improve the presentation of this paper. We would like to thank Rongjie Lai for discussions on SOC and the compressed modes problem, Lingzhou Xue for discussions on the sparse PCA problem, Zaiwen Wen for discussions on semi-smooth Newton methods, and Wen Huang and Ke Wei for discussions on manifolds.

\appendix

\section{Semi-smoothness of Proximal Mapping} \label{sec:semi-smooth}

\begin{definition} \label{def:clarke}
	Let $E:\Omega\rightarrow \R^q$ be locally Lipschitz continuous at $X\in \Omega\subset \R^p$. The $B$-subdifferential of $E$ at $X$ is defined by
	\[ \partial_B E(X):= \left\{ \lim_{k\rightarrow \infty} E'(X_k) \,\Big|\, X^k\in D_E, X_k \rightarrow X \right\}, \]
	where $D_E$ is the set of differentiable points of $E$ in $\Omega$. The set $\partial E(X) = \text{conv}(\partial_B E(X))$ is called Clarke's generalized Jacobian, where $\text{conv}$ denotes the convex hull.
\end{definition}
{
	Note that if $q = 1$ and $E$ is convex, then the definition is the same as that of standard convex subdifferential. Thus, we use the notation $\partial$ in Definition~\ref{def:clarke}.}
\begin{definition}\cite{Mifflin-1977,Qi-Sun-1993}
	Let $E:\Omega\rightarrow \R^q$ be locally Lipschitz continuous at $X\in \Omega\subset \R^p$. We say that $E$ is semi-smooth at $X\in \Omega$ if $E$ is directionally differentiable at $X$ and for any $J\in \partial E(X+\Delta X)$ with $\Delta X\rightarrow 0$,
	\[E(X+\Delta X) - E(X) - J\Delta X = o(\normtwo{\Delta X}). \]
	We say that $E$ is strongly semi-smooth at $X$ if $E$ is semi-smooth at $X$ and
	\[E(X+\Delta X) - E(X) - J\Delta X = O(\normtwo{\Delta X}^2). \]	
	We say that $E$ is semi-smooth on $\Omega$ if it is semi-smooth at every $X\in\Omega$.
\end{definition}
The proximal mapping of $\ell_p$ ($p\geq 1$) norm is strongly semi-smooth \cite{facchinei2007finite,ulbrich2011semismooth}. From \cite[Prop. 2.26]{ulbrich2011semismooth}, if $E: \Omega\rightarrow \R^m$ is a piecewise $\mathcal{C}^1$ (piecewise smooth) function, then $E$ is semi-smooth. If $E$ is a piecewise $\mathcal{C}^2$ function, then $E$ is strongly semi-smooth. It is known that proximal mappings of many interesting functions are piecewise linear or piecewise smooth.

\bibliography{manifold}

\begin{thebibliography}{10}

\bibitem{Abrudan2008}
T.~E. Abrudan, J.~Eriksson, and V.~Koivunen.
\newblock Steepest descent algorithms for optimization under unitary matrix
  constraint.
\newblock {\em IEEE Transactions on Signal Processing}, 56(3):1134--1147, 2008.

\bibitem{Abrudan2009}
T.~E. Abrudan, J.~Eriksson, and V.~Koivunen.
\newblock Conjugate gradient algorithm for optimization under unitary matrix
  constraint.
\newblock {\em Signal Processing}, 89(9):1704--1714, 2009.

\bibitem{Absil-nonsmooth-examples}
P.-A. Absil and S.~Hosseini.
\newblock A collection of nonsmooth {R}iemannian optimization problems.
\newblock {\em Springer International Series of Numerical Mathematics (ISNM)},
  2017.

\bibitem{Absil2009}
P.-A. Absil, R.~Mahony, and R.~Sepulchre.
\newblock {\em Optimization algorithms on matrix manifolds}.
\newblock Princeton University Press, 2009.

\bibitem{absil2015low}
P.-A. Absil and I.~V Oseledets.
\newblock Low-rank retractions: a survey and new results.
\newblock {\em Computational Optimization and Applications}, 62(1):5--29, 2015.

\bibitem{Afonso-BD-Figueiredo-2009}
M.~Afonso, J.~Bioucas-Dias, and M.~Figueiredo.
\newblock Fast image recovery using variable splitting and constrained
  optimization.
\newblock {\em IEEE Transactions on Image Processing}, 19(9):2345--2356, 2010.

\bibitem{attouch2010proximal}
H.~Attouch, J.~Bolte, P.~Redont, and A.~Soubeyran.
\newblock Proximal alternating minimization and projection methods for
  nonconvex problems: An approach based on the {K}urdyka-{{\L}}ojasiewicz
  inequality.
\newblock {\em Mathematics of Operations Research}, 35(2):438--457, 2010.

\bibitem{Attouch-2016-MP}
H.~Attouch, Z.~Chbani, J.~Peypouquet, and P.~Redont.
\newblock Fast convergence of inertial dynamics and algorithms with asymptotic
  vanishing viscosity.
\newblock {\em Mathematical Programming}, 168(1-2):123--175, 2018.

\bibitem{Bacak-2016}
M.~Bacak, R.~Bergmann, G.~Steidl, and A.~Weinmann.
\newblock A second order nonsmooth variational model for restoring
  manifold-valued images.
\newblock {\em SIAM Journal on Scientific Computing}, 38:A567--A597, 2016.

\bibitem{Boumal-phase-retrieval-2018}
T.~Bendory, Y.~C. Eldar, and N.~Boumal.
\newblock Non-convex phase retrieval from {STFT} measurements.
\newblock {\em IEEE Transactions on Information Theory}, 64(1):467--484, 2018.

\bibitem{Bento-convergence-inexact-descent-2011}
G.~C. Bento, J.~X. Cruz~Neto, and P.~R. Oliveira.
\newblock Convergence of inexact descent methods for nonconvex optimization on
  {R}iemannian manifolds.
\newblock {\em https://arxiv.org/abs/1103.4828v1}, 2011.

\bibitem{Bento-PPA-manifold-2016}
G.~C. Bento, J.~X. Cruz~Neto, and P.~R. Oliveira.
\newblock A new approach to the proximal point method: convergence on general
  {R}iemannian manifolds.
\newblock {\em J. Optim Theory Appl}, 168:743--755, 2016.

\bibitem{Bento-iteration-complexity-2017}
G.~C. Bento, O.~P. Ferreira, and J.~G. Melo.
\newblock Iteration-complexity of gradient, subgradient and proximal point
  methods on {R}iemannian manifolds.
\newblock {\em J. Optim Theory Appl}, 173:548--562, 2017.

\bibitem{bishop1969manifolds}
R.~L. Bishop and B.~O'Neill.
\newblock Manifolds of negative curvature.
\newblock {\em Transactions of the American Mathematical Society}, 145:1--49,
  1969.

\bibitem{Bolte-Sabach-Teboulle-2014}
J.~Bolte, S.~Sabach, and M.~Teboulle.
\newblock Proximal alternating linearized minimization for nonconvex and
  nonsmooth problems.
\newblock {\em Mathematical Programming}, 146:459--494, 2014.

\bibitem{Borckmans-2014}
P.~B. Borckmans, S.~Easter~Selvan, N.~Boumal, and P.-A. Absil.
\newblock A {R}iemannian subgradient algorithm for economic dispatch with
  valve-point effect.
\newblock {\em J. Comput. Appl. Math.}, 255:848--866, 2014.

\bibitem{Boumal-phase-synchronization-2016}
N.~Boumal.
\newblock Nonconvex phase synchronization.
\newblock {\em SIAM Journal on Optimization}, 26(4):2355--2377, 2016.

\bibitem{RTRMC-2011}
N.~Boumal and P.-A. Absil.
\newblock {RTRMC}: A {R}iemannian trust-region method for low-rank matrix
  completion.
\newblock In {\em NIPS}, 2011.

\bibitem{Boumal2016}
N.~Boumal, P.-A. Absil, and C.~Cartis.
\newblock Global rates of convergence for nonconvex optimization on manifolds.
\newblock {\em IMA Journal of Numerical Analysis}, 39(1):1--33, 2019.

\bibitem{Boumal2014}
N.~Boumal, B.~Mishra, P.-A. Absil, and R.~Sepulchre.
\newblock Manopt, a {M}atlab toolbox for optimization on manifolds.
\newblock {\em The Journal of Machine Learning Research}, 15(1):1455--1459,
  2014.

\bibitem{Boyd-etal-ADM-survey-2011}
S.~Boyd, N.~Parikh, E.~Chu, B.~Peleato, and J.~Eckstein.
\newblock Distributed optimization and statistical learning via the alternating
  direction method of multipliers.
\newblock {\em Foundations and Trends in Machine Learning}, 3(1):1--122, 2011.

\bibitem{Chen2016}
W.~Chen, H.~Ji, and Y.~You.
\newblock An augmented {L}agrangian method for $\ell_1$-regularized
  optimization problems with orthogonality constraints.
\newblock {\em SIAM Journal on Scientific Computing}, 38(4):B570--B592, 2016.

\bibitem{Sra-Riemannian-dictionary-learning-2016}
A.~Cherian and S.~Sra.
\newblock {R}iemannian dictionary learning and sparse coding for positive
  definite matrices.
\newblock {\em IEEE Trans. Neural Networks and Learning Systems},
  28(12):2859--2871, 2017.

\bibitem{Combettes-Pesquet-DR-2007}
P.~L. Combettes and J.-C. Pesquet.
\newblock A {D}ouglas-{R}achford splitting approach to nonsmooth convex
  variational signal recovery.
\newblock {\em IEEE Journal of Selected Topics in Signal Processing},
  1(4):564--574, 2007.

\bibitem{daspremont-sparsePCA-direct-formulation-2007}
A.~d'Aspremont, L.~El Ghaoui, M.~I. Jordan, and G.~R.~G. Lanckriet.
\newblock A direct formulation for sparse {PCA} using semidefinite programming.
\newblock {\em SIAM Review}, 49(3):434--448, 2007.

\bibitem{Dirr-2006}
G.~Dirr, U.~Helmke, and C.~Lageman.
\newblock Nonsmooth {R}iemannian optimization with applications to sphere
  packing and grasping.
\newblock {\em Lagrangian and Hamiltonian Methods for Nonlinear Control}, pages
  28--45, 2006.

\bibitem{Eckstein-thesis-89}
J.~Eckstein.
\newblock {\em Splitting methods for monotone operators with applications to
  parallel optimization}.
\newblock PhD thesis, Massachusetts Institute of Technology, 1989.

\bibitem{EdelmanAriasSmith1999}
A.~Edelman, T.~A. Arias, and S.~T. Smith.
\newblock The geometry of algorithms with orthogonality constraints.
\newblock {\em SIAM J. Matrix Anal. Appl.}, 20(2):303--353 (electronic), 1999.

\bibitem{facchinei2007finite}
F.~Facchinei and J.~Pang.
\newblock {\em Finite-dimensional variational inequalities and complementarity
  problems}.
\newblock Springer Science \& Business Media, 2007.

\bibitem{ferreira1998subgradient}
O.~P. Ferreira and P.~R. Oliveira.
\newblock Subgradient algorithm on {R}iemannian manifolds.
\newblock {\em Journal of Optimization Theory and Applications}, 97(1):93--104,
  1998.

\bibitem{Ferreira-Oliveira-PPA-Manifold-2002}
O.~P. Ferreira and P.~R. Oliveira.
\newblock Proximal point algorithm on {R}iemannian manifold.
\newblock {\em Optimization}, 51:257--270, 2002.

\bibitem{Fortin-Glowinski-1983}
M.~Fortin and R.~Glowinski.
\newblock {\em Augmented Lagrangian methods: applications to the numerical
  solution of boundary-value problems}.
\newblock North-Holland Pub. Co., 1983.

\bibitem{Gabay-Mercier-1976}
D.~Gabay and B.~Mercier.
\newblock A dual algorithm for the solution of nonlinear variational problems
  via finite-element approximations.
\newblock {\em Comp. Math. Appl.}, 2:17--40, 1976.

\bibitem{Genicot-weakly-2015}
M.~Genicot, W.~Huang, and N.~T. Trendafilov.
\newblock Weakly correlated sparse components with nearly orthonormal loadings.
\newblock {\em Geometric Science of Information}, pages 484--490, 2015.

\bibitem{Glowinski-LeTallec-89}
R.~Glowinski and P.~Le~Tallec.
\newblock {\em Augmented Lagrangian and Operator-Splitting Methods in Nonlinear
  Mechanics}.
\newblock SIAM, Philadelphia, Pennsylvania, 1989.

\bibitem{Glowinski-Marrocco-1975}
R.~Glowinski and A.~Marrocco.
\newblock Sur l'approximation, par {\'e}l{\'e}ments finis d'ordre un, et la
  r{\'e}solution, par p{\'e}nalisation-dualit{\'e} d'une classe de
  probl{\`e}mes de dirichlet non lin{\'e}aires.
\newblock {\em ESAIM: Mathematical Modelling and Numerical
  Analysis-Mod{\'e}lisation Math{\'e}matique et Analyse Num{\'e}rique}, pages
  41--76, 1975.

\bibitem{Goldstein-Osher-2008}
T.~Goldstein and S.~Osher.
\newblock The split {B}regman method for {L}1-regularized problems.
\newblock {\em SIAM J. Imaging Sci.}, 2:323--343, 2009.

\bibitem{golub2012matrix}
G.~H. Golub and C.~F. Van~Loan.
\newblock {\em Matrix Computations}.
\newblock JHU Press, 2012.

\bibitem{Grohs-nonsmooth-trust-region-2016}
P.~Grohs and S.~Hosseini.
\newblock Nonsmooth trust region algorithms for locally {L}ipschitz functions
  on {R}iemannian manifolds.
\newblock {\em IMA J. Numer. Anal.}, 36:1167--1192, 2016.

\bibitem{grohs2016varepsilon}
P.~Grohs and S.~Hosseini.
\newblock $\varepsilon$-subgradient algorithms for locally lipschitz functions
  on riemannian manifolds.
\newblock {\em Advances in Computational Mathematics}, 42(2):333--360, 2016.

\bibitem{hiriart1984generalized}
J.-B. Hiriart-Urruty, J.-J. Strodiot, and V.~H. Nguyen.
\newblock Generalized hessian matrix and second-order optimality conditions for
  problems with {$C^{1,1}$} data.
\newblock {\em Applied mathematics and optimization}, 11(1):43--56, 1984.

\bibitem{Hosseini-2017-KL}
S.~Hosseini.
\newblock Convergence of nonsmooth descent methods via
  {K}urdyka-{{\L}}ojasiewicz inequality on {R}iemannian manifolds.
\newblock 2017.

\bibitem{hosseini2018line}
S.~Hosseini, W.~Huang, and R.~Yousefpour.
\newblock Line search algorithms for locally {L}ipschitz functions on
  {R}iemannian manifolds.
\newblock {\em SIAM Journal on Optimization}, 28(1):596--619, 2018.

\bibitem{hosseini2011generalized}
S.~Hosseini and M.~R. Pouryayevali.
\newblock Generalized gradients and characterization of epi-{L}ipschitz sets in
  {R}iemannian manifolds.
\newblock {\em Nonlinear Analysis: Theory, Methods \& Applications},
  72(12):3884--3895, 2011.

\bibitem{Hosseini-Uschmajew-2017}
S.~Hosseini and A.~Uschmajew.
\newblock A {R}iemannian gradient sampling algorithm for nonsmooth optimization
  on manifolds.
\newblock {\em SIAM J. Optim.}, 27(1):173--189, 2017.

\bibitem{hotelling1933analysis}
H.~Hotelling.
\newblock Analysis of a complex of statistical variables into principal
  components.
\newblock {\em Journal of Educational Psychology}, 24(6):417--441, 1933.

\bibitem{Huang-2018}
W.~Huang and P.~Hand.
\newblock Blind deconvolution by a steepest descent algorithm on a quotient
  manifold.
\newblock {\em SIAM Journal on Imaging Sciences}, 11(4):2757--2785, 2018.

\bibitem{jiang2015framework}
B.~Jiang and Y.-H. Dai.
\newblock A framework of constraint preserving update schemes for optimization
  on {S}tiefel manifold.
\newblock {\em Mathematical Programming}, 153(2):535--575, 2015.

\bibitem{Jiang-svrg-manifold-2017}
B.~Jiang, S.~Ma, A.~M.-C. So, and S.~Zhang.
\newblock Vector transport-free {SVRG} with general retraction for {R}iemannian
  optimization: Complexity analysis and practical implementation.
\newblock {\em https://arxiv.org/abs/1705.09059v1}, 2017.

\bibitem{Jolliffe2003}
I.~Jolliffe, N.Trendafilov, and M.~Uddin.
\newblock A modified principal component technique based on the lasso.
\newblock {\em Journal of computational and Graphical Statistics},
  12(3):531--547, 2003.

\bibitem{Journee-Nesterov-sparsePCA-JMLR-2010}
M.~Journee, Yu. Nesterov, P.~Richtarik, and R.~Sepulchre.
\newblock Generalized power method for sparse principal component analysis.
\newblock {\em J. Mach. Learn. Res.}, 11:517--553, 2010.

\bibitem{Kovnatsky2016}
A.~Kovnatsky, K.~Glashoff, and M.~M. Bronstein.
\newblock {MADMM}: a generic algorithm for non-smooth optimization on
  manifolds.
\newblock In {\em European Conference on Computer Vision}, pages 680--696.
  Springer, 2016.

\bibitem{Lai-Osher-soc-2014}
R.~Lai and S.~Osher.
\newblock A splitting method for orthogonality constrained problems.
\newblock {\em Journal of Scientific Computing}, 58(2):431--449, 2014.

\bibitem{Sun-lasso-2018}
X.~Li, D.~Sun, and K.-C. Toh.
\newblock A highly efficient semismooth {N}ewton augmented {L}agrangian method
  for solving {L}asso problems.
\newblock {\em SIAM J. Optim.}, 28:433--458, 2018.

\bibitem{Lions-Mercier-79}
P.~L. Lions and B.~Mercier.
\newblock Splitting algorithms for the sum of two nonlinear operators.
\newblock {\em SIAM Journal on Numerical Analysis}, 16:964--979, 1979.

\bibitem{Liu-So-Wu-2018}
H.~Liu, A.~M.-C. So, and W.~Wu.
\newblock Quadratic optimization with orthogonality constraint: Explicit
  {{\L}}ojasiewicz exponent and linear convergence of retraction-based
  line-search and stochastic variance-reduced gradient methods.
\newblock {\em Mathematical Programming, Series A}, 2018.

\bibitem{Liu-generalized-power-phase-synchronization-2017}
H.~Liu, M.-C. Yue, and A.~M.-C. So.
\newblock On the estimation performance and convergence rate of the generalized
  power method for phase synchronization.
\newblock {\em SIAM J. Optim.}, 27(4):2426--2446, 2017.

\bibitem{Ma-SPCA-2011-submit}
S.~Ma.
\newblock Alternating direction method of multipliers for sparse principal
  component analysis.
\newblock {\em Journal of the Operations Research Society of China},
  1(2):253--274, 2013.

\bibitem{Magnus1988}
J.~R. Magnus and H.~Neudecker.
\newblock Matrix differential calculus with applications in statistics and
  econometrics.
\newblock {\em Wiley Series in Probability and Mathematical Statistics}, 1988.

\bibitem{Mifflin-1977}
R.~Mifflin.
\newblock Semismooth and semiconvex functions in constrained optimization.
\newblock {\em SIAM J. Control Optim.}, 15:959--972, 1977.

\bibitem{Nishimori2005}
Y.~Nishimori and S.~Akaho.
\newblock Learning algorithms utilizing quasi-geodesic flows on the {S}tiefel
  manifold.
\newblock {\em Neurocomputing}, 67:106--135, 2005.

\bibitem{Ozolins2013}
V.~Ozoli{\c{n}}{\v{s}}, R.~Lai, R.~Caflisch, and S.~Osher.
\newblock Compressed modes for variational problems in mathematics and physics.
\newblock {\em Proceedings of the National Academy of Sciences},
  110(46):18368--18373, 2013.

\bibitem{pearson1901liii}
K.~Pearson.
\newblock Liii. on lines and planes of closest fit to systems of points in
  space.
\newblock {\em The London, Edinburgh, and Dublin Philosophical Magazine and
  Journal of Science}, 2(11):559--572, 1901.

\bibitem{Qi-Sun-NCM-IMA}
H.~Qi and D.~Sun.
\newblock An augmented {L}agrangian dual approach for the {H}-weighted nearest
  correlation matrix problem.
\newblock {\em IMA Journal of Numerical Analysis}, 31:491--511, 2011.

\bibitem{Qi-Sun-1993}
L.~Qi and J.~Sun.
\newblock A nonsmooth version of {N}ewton's method.
\newblock {\em Math. Program.}, 58:353--367, 1993.

\bibitem{Savas2010}
B.~Savas and L.-H. Lim.
\newblock Quasi-{N}ewton methods on {G}rassmannians and multilinear
  approximations of tensors.
\newblock {\em SIAM Journal on Scientific Computing}, 32(6):3352--3393, 2010.

\bibitem{Shamir-svd-2015}
O.~Shamir.
\newblock A stochastic {PCA} and {SVD} algorithm with an exponential
  convergence rate.
\newblock In {\em ICML}, 2015.

\bibitem{Shamir-svd-2016}
O.~Shamir.
\newblock Fast stochastic algorithms for {SVD} and {PCA}: Convergence
  properties and convexity.
\newblock In {\em ICML}, 2016.

\bibitem{Shen-Huang-spca-2008}
H.~Shen and J.~Z. Huang.
\newblock Sparse principal component analysis via regularized low rank matrix
  approximation.
\newblock {\em Journal of Multivariate Analysis}, 99(6):1015--1034, 2008.

\bibitem{Solodov1998}
M.~V. Solodov and B.~F. Svaiter.
\newblock A globally convergent inexact {N}ewton method for systems of monotone
  equations.
\newblock In {\em Reformulation: Nonsmooth, Piecewise Smooth, Semismooth and
  Smoothing Methods}, pages 355--369. Springer, 1998.

\bibitem{Sun-dictionary-recovery-sphere-2017}
J.~Sun, Q.~Qu, and J.~Wright.
\newblock Complete dictionary recovery over the sphere {I}: Overview and the
  geometric picture.
\newblock {\em IEEE Trans. Information Theory}, 63(2):853--884, 2017.

\bibitem{Sun-Ju-geometric-phase-retrieval-2018}
J.~Sun, Q.~Qu, and J.~Wright.
\newblock A geometrical analysis of phase retrieval.
\newblock {\em Foundations of Computational Mathematics}, 18(5):1131--1198,
  2018.

\bibitem{Tang2012}
J.~Tang and H.~Liu.
\newblock Unsupervised feature selection for linked social media data.
\newblock In {\em SIGKDD}, pages 904--912. ACM, 2012.

\bibitem{ulbrich2011semismooth}
M.~Ulbrich.
\newblock {\em Semismooth Newton methods for variational inequalities and
  constrained optimization problems in function spaces}, volume~11.
\newblock SIAM, 2011.

\bibitem{Vandereycken-matrix-completion-2013}
B.~Vandereycken.
\newblock Low-rank matrix completion by {R}iemannian optimization.
\newblock {\em SIAM J. Optim.}, 23(2):1214--1236, 2013.

\bibitem{Wang-Sun-Toh-2009}
C.~Wang, D.~Sun, and K.-C. Toh.
\newblock Solving log-determinant optimization problems by a {N}ewton-{CG}
  primal proximal point algorithm.
\newblock {\em SIAM J. Optimization}, 20:2994--3013, 2010.

\bibitem{WangYinZeng15}
Y.~Wang, W.~Yin, and J.~Zeng.
\newblock Global convergence of {ADMM} in nonconvex nonsmooth optimization.
\newblock {\em Journal of Scientific Computing}, 78(1):29--63, 2019.

\bibitem{Wen-Yin-2013}
Z.~Wen and W.~Yin.
\newblock A feasible method for optimization with orthogonality constraints.
\newblock {\em Mathematical Programming}, 142(1-2):397--434, 2013.

\bibitem{Xiao2016}
X.~Xiao, Y.~Li, Z.~Wen, and L.~Zhang.
\newblock A regularized semi-smooth {N}ewton method with projection steps for
  composite convex programs.
\newblock {\em Journal of Scientific Computing}, 76(1):364--389, 2018.

\bibitem{Sun-group-lasso}
J.~Yang, D.~Sun, and K.-C. Toh.
\newblock A proximal point algorithm for log-determinant optimization with
  group {L}asso regularization.
\newblock {\em SIAM J. Optim.}, 23:857--893, 2013.

\bibitem{Yang-Yin-Zhang-Wang-08}
J.~Yang, W.~Yin, Y.~Zhang, and Y.~Wang.
\newblock A fast algorithm for edge-preserving variational multichannel image
  restoration.
\newblock {\em SIAM Journal on Imaging Sciences}, 2(2):569--592, 2008.

\bibitem{Sun-sdpnal+}
L.~Yang, D.~Sun, and K.-C. Toh.
\newblock {SDPNAL}+: a majorized semismooth {N}ewton-{CG} augmented
  {L}agrangian method for semidefinite programming with nonnegative
  constraints.
\newblock {\em Mathematical Programming Computation}, 7(3):331--366, 2015.

\bibitem{Yang-manifold-optimality-2014}
W.~H. Yang, L.-H. Zhang, and R.~Song.
\newblock Optimality conditions for the nonlinear programming problems on
  {R}iemannian manifolds.
\newblock {\em Pacific J. Optimization}, 10(2):415--434, 2014.

\bibitem{Yang2011}
Y.~Yang, H.~Shen, Z.~Ma, Z.~Huang, and X.~Zhou.
\newblock $\ell_{2,1}$-norm regularized discriminative feature selection for
  unsupervised learning.
\newblock In {\em IJCAI}, volume~22, page 1589, 2011.

\bibitem{Zhang-MCP-2010}
C.-H. Zhang.
\newblock Nearly unbiased variable selection under minimax concave penalty.
\newblock {\em Annuals of Statistics}, pages 894--942, 2010.

\bibitem{zhang2016fast}
H.~Zhang, S.~Reddi, and S.~Sra.
\newblock Fast stochastic optimization on {R}iemannian manifolds.
\newblock In {\em NIPS}, 2016.

\bibitem{Sra-1st-order-geodesically-convex-2016}
H.~Zhang and S.~Sra.
\newblock First-order methods for geodesically convex optimization.
\newblock {\em Proceedings of the 29th Conference on Learning Theory},
  49:1617--1638, 2016.

\bibitem{Zhang-Ma-Zhang-manifold-2017}
J.~Zhang, S.~Ma, and S.~Zhang.
\newblock Primal-dual optimization algorithms over {R}iemannian manifolds: an
  iteration complexity analysis.
\newblock {\em https://arxiv.org/abs/1710.02236}, 2017.

\bibitem{Zhang-cvpr-2017}
Y.~Zhang, Y.~Lau, H.-W. Kuo, S.~Cheung, A.~Pasupathy, and J.~Wright.
\newblock On the global geometry of sphere-constrained sparse blind
  deconvolution.
\newblock In {\em CVPR}, 2017.

\bibitem{ZhaoSunToh2008}
X.~Zhao, D.~Sun, and K.-C. Toh.
\newblock A {N}ewton-{CG} augmented {L}agrangian method for semidefinite
  programming.
\newblock {\em SIAM Journal on Optimization}, 20:1737--1765, 2010.

\bibitem{Zhou2005}
G.~Zhou and K.-C. Toh.
\newblock Superlinear convergence of a {N}ewton-type algorithm for monotone
  equations.
\newblock {\em Journal of optimization theory and applications},
  125(1):205--221, 2005.

\bibitem{Zhu2017}
H.~Zhu, X.~Zhang, D.~Chu, and L.~Liao.
\newblock Nonconvex and nonsmooth optimization with generalized orthogonality
  constraints: An approximate augmented {L}agrangian method.
\newblock {\em Journal of Scientific Computing}, 72(1):331--372, 2017.

\bibitem{Zou-spca-2006}
H.~Zou, T.~Hastie, and R.~Tibshirani.
\newblock Sparse principal component analysis.
\newblock {\em J. Comput. Graph. Stat.}, 15(2):265--286, 2006.

\bibitem{Zou-Xue-spca-survey-2018}
H.~Zou and L.~Xue.
\newblock A selective overview of sparse principal component analysis.
\newblock {\em Proceedings of the IEEE}, 106(8):1311--1320, 2018.

\end{thebibliography}
\bibliographystyle{plain}

\end{document}